  \edef\mtht{\the\textheight}
  \edef\mtwd{\the\textwidth}
  \definecolor{BackgroundColor}{RGB}{253, 246, 227}
\tikzset{
  commutative diagrams/.cd, 
  arrow style=tikz, 
  diagrams={>=stealth}
}
\addspace\texttt{\mkbibbrackets{\thefield{arxivclass}}}}}}
\addspace\texttt{\mkbibbrackets{\thefield{arxivclass}}}}}}
\newcommand{\printreferences}{\printbibliography[heading=bibintoc]}
\ifundef{\abstract}{}{\patchcmd{\abstract}%
    {\quotation}{\quotation\noindent\ignorespaces}{}{}}
\numberwithin{equation}{section}
\renewcommand{\eqref}[1]{\hyperref[#1]{\rm(\ref*{#1})}}
\def\makeautorefname#1#2{\AtBeginDocument{\expandafter\def\csname#1autorefname\endcsname{#2}}}
\newcommand{\mynewtheorem}[2]{
  \newaliascnt{#1}{equation}          
  \newtheorem{#1}[#1]{#2}
  \aliascntresetthe{#1}
  \makeautorefname{#1}{#2}
}
\newtheorem*{axiom*}{Axiom}
\newtheorem*{theorem*}{Theorem}
\newtheorem*{prop*}{Proposition}
\newtheorem*{conjecture*}{Conjecture}
\newtheorem{step}{Step}
\numberwithin{substep}{step}
\numberwithin{subcase}{case}
\theoremstyle{remark}
\newtheorem*{remark*}{Remark}
\newtheorem*{convention*}{Convention}
\newtheorem*{conventions*}{Conventions}
\theoremstyle{remark}
\theoremstyle{definition}
\newtheorem*{definition*}{Definition}
\newtheorem*{example*}{Example}
\newtheorem*{question*}{Question}
\let\C\undefined
\let\U\undefined
\DeclareFontFamily{U}{mathx}{\hyphenchar\font45}
\DeclareFontShape{U}{mathx}{m}{n}{
      <5> <6> <7> <8> <9> <10>
      <10.95> <12> <14.4> <17.28> <20.74> <24.88>
      mathx10
      }{}
\DeclareSymbolFont{mathx}{U}{mathx}{m}{n}
\DeclareMathAccent{\widecheck}{0}{mathx}{"71}
\DeclareMathAccent{\wideparen}{0}{mathx}{"75}
\DeclareMathOperator{\Ad}{Ad}
\DeclareMathOperator{\End}{End}
\DeclareMathOperator{\GL}{GL}
\DeclareMathOperator{\HF}{\HF}
\DeclareMathOperator{\Hom}{Hom}
\DeclareMathOperator{\Lie}{Lie}
\DeclareMathOperator{\Sym}{Sym}
\DeclareMathOperator{\coker}{coker}
\DeclareMathOperator{\im}{im}
\DeclareMathOperator{\ind}{index}
\DeclareMathOperator{\sign}{sign}
\DeclarePairedDelimiter\paren{\lparen}{\rparen}
\DeclarePairedDelimiter{\Abs}{\|}{\|}
\DeclarePairedDelimiter{\LIEbracket}{\llbracket}{\rrbracket}
\DeclarePairedDelimiter{\abs}{\lvert}{\rvert}
\DeclarePairedDelimiter{\braket}{\langle}{\rangle}
\DeclarePairedDelimiter{\liebracket}{[}{]}
\DeclarePairedDelimiter{\set}{\lbrace}{\rbrace}
\def\({\left(}
\def\){\right)}
\def\<{\left\langle}
\def\>{\right\rangle}
\newcommand{\C}{{\mathbf{C}}}
\newcommand{\Gtwo}{G_2}
\newcommand{\LIE}[2]{\LIEbracket*{#1,#2}}
\newcommand{\Met}{\sM\!et}
\newcommand{\N}{{\mathbf{N}}}
\newcommand{\PU}{{\P\U}}
\newcommand{\R}{\mathbf{R}}
\newcommand{\SO}{\mathrm{SO}}
\newcommand{\SU}{\mathrm{SU}}
\newcommand{\SW}{\mathrm{SW}}
\newcommand{\Span}[1]{\braket{#1}}
\newcommand{\Spin}{\mathrm{Spin}}
\newcommand{\Sp}{\mathrm{Sp}}
\newcommand{\U}{\mathrm{U}}
\newcommand{\Z}{\mathbf{Z}}
\newcommand{\andq}{\text{and}\quad}
\newcommand{\ch}{\mathrm{ch}}
\newcommand{\co}{\mskip0.5mu\colon\thinspace}
\newcommand{\defined}[2][\key]{\def\key{#2}\textbf{#2}\index{#1}}
\newcommand{\del}{\partial}
\newcommand{\hkred}{{/\!\! /\!\! /}}
\newcommand{\id}{\mathrm{id}}
\newcommand{\inner}[2]{\braket{#1, #2}}
\newcommand{\iso}{\cong}
\newcommand{\itref}{\eqref}
\newcommand{\lie}[2]{\liebracket*{#1,#2}}
\newcommand{\nsub}{\triangleleft}
\newcommand{\ob}{\mathrm{ob}}
\newcommand{\qandq}{\quad\text{and}\quad}
\newcommand{\qand}{\quad\text{and}}
\newcommand{\reg}{\mathrm{reg}}
\newcommand{\sfrac}[2]{\left.\(#1\)\middle/\(#2\)\right.}
\newcommand{\sw}{\fs\fw}
\newcommand{\wlie}[2]{\liebracket*{#1\wedge #2}}
\renewcommand{\H}{\mathbf{H}}
\renewcommand{\Im}{\operatorname{Im}}
\renewcommand{\O}{\mathrm{O}}
\renewcommand{\P}{\mathbf{P}}
\renewcommand{\det}{\operatorname{det}}
\renewcommand{\emptyset}{\varnothing}
\renewcommand{\epsilon}{\varepsilon}
\renewcommand{\setminus}{{\backslash}}
\renewcommand{\sp}{\mathfrak{sp}}
\renewcommand{\leq}{\leqslant}
\renewcommand{\geq}{\geqslant}
\renewcommand*\env@matrix[1][*\c@MaxMatrixCols c]{%
  \hskip -\arraycolsep
  \let\@ifnextchar\new@ifnextchar
  \array{#1}}
\renewcommand\xleftrightarrow[2][]{%
  \ext@arrow 9999{\longleftrightarrowfill@}{#1}{#2}}
\newcommand\longleftrightarrowfill@{%
  \arrowfill@\leftarrow\relbar\rightarrow}
\newcommand{\rd}{{\rm d}}
\newcommand{\rII}{{\rm II}}
\newcommand{\bp}{{\mathbf{p}}}
\newcommand{\bA}{{\mathbf{A}}}
\newcommand{\bL}{{\mathbf{L}}}
\newcommand{\bP}{{\mathbf{P}}}
\newcommand{\bQ}{{\mathbf{Q}}}
\newcommand{\bV}{{\mathbf{V}}}
\newcommand{\cL}{\mathcal{L}}
\newcommand{\sA}{\mathscr{A}}
\newcommand{\sG}{\mathscr{G}}
\newcommand{\sI}{\mathscr{I}}
\newcommand{\sM}{\mathscr{M}}
\newcommand{\sP}{\mathscr{P}}
\newcommand{\fa}{{\mathfrak a}}
\newcommand{\fc}{{\mathfrak c}}
\newcommand{\fd}{{\mathfrak d}}
\newcommand{\fe}{{\mathfrak e}}
\newcommand{\fg}{{\mathfrak g}}
\newcommand{\fl}{{\mathfrak l}}
\newcommand{\fm}{{\mathfrak m}}
\newcommand{\fn}{{\mathfrak n}}
\newcommand{\fo}{{\mathfrak o}}
\newcommand{\fq}{{\mathfrak q}}
\newcommand{\fr}{{\mathfrak r}}
\newcommand{\fs}{{\mathfrak s}}
\newcommand{\ft}{{\mathfrak t}}
\newcommand{\fu}{{\mathfrak u}}
\newcommand{\fw}{{\mathfrak w}}
\newcommand{\fx}{{\mathfrak x}}
\newcommand{\fz}{{\mathfrak z}}
\newcommand{\fF}{{\mathfrak F}}
\newcommand{\fH}{{\mathfrak H}}
\newcommand{\fM}{{\mathfrak M}}
\newcommand{\fN}{{\mathfrak N}}
\newcommand{\fS}{{\mathfrak S}}
\newcommand{\fU}{{\mathfrak U}}
\newcommand{\fX}{{\mathfrak X}}
\newcommand{\bfS}{{\mathbf{\mathfrak S}}}
\newcommand{\slD}{\slashed D}
\newcommand{\slS}{\slashed S}
\author{
  Aleksander Doan
  \and
  Thomas Walpuski
}
\title{
  Deformation theory of the blown-up Seiberg--Witten equation in dimension three
}
\date{2018-09-26}
\begin{document}

\maketitle

\begin{abstract}
  \noindent
  Associated with every quaternionic representation of a compact, connected Lie group there is a Seiberg--Witten equation in dimension three.
  The moduli spaces of solutions to these equations are typically non-compact.
  We construct Kuranishi models around boundary points of a partially compactified moduli space.
  The Haydys correspondence identifies such boundary points with Fueter sections---solutions of a non-linear Dirac equation---of the bundle of hyperkähler quotients associated with the quaternionic representation.
  We discuss when such a Fueter section can be deformed to a solution of the Seiberg--Witten equation.
\end{abstract}


\section{Introduction}
\label{Sec_Introduction}

Associated with every quaternionic representation of a compact, connected Lie group there is a system of partial differential equations generalizing the classical Seiberg--Witten equations in dimension three and four;
see, for example, \citet{Taubes1999b}, \citet{Pidstrigach2004}, \citet{Haydys2008}, \citet[Section 6]{Salamon2013}, and \citet[Section 6(i)]{Nakajima2015}.
In fact, almost every equation studied in mathematical gauge theory arises in this way.
In the present paper we focus on the $3$--dimensional theory.
A key difficulty in studying Seiberg--Witten equations arises from the non-compactness issue caused by a lack of a priori bounds on the spinor.
This phenomenon has been studied in special cases by Taubes \cite{Taubes2012,Taubes2013,Taubes2016}, and Haydys and Walpuski \cite{Haydys2014}.
To focus on the issue of the spinor becoming very large, one passes to a blown-up Seiberg--Witten equation.
The lack of a priori bounds then manifests itself as the equation becoming degenerate elliptic when the norm of the spinor tends to infinity.
However, the Haydys correspondence allows us to reinterpret the limiting equation as a non-linear version of the Dirac equation, known as the Fueter equation \cites{Salamon2013}{Haydys2013}.
This suggests that, although formally the blown-up Seiberg--Witten equation appears to be degenerate, one should be able to develop an elliptic deformation theory around points at infinity of the moduli space.
This is what is achieved in the current paper; the main result being \autoref{Thm_KuranishiModelNearZero} below.

Our second result, \autoref{Thm_GenericOneParameterFamilies}, asserts that, under a transversality assumption, Fueter sections cause wall-crossing for the signed count of solutions to the Seiberg--Witten equation---%
a new phenomenon which has no analog in classical Seiberg--Witten theory.
In \cite{Doan2017c} we analyze this wall-crossing phenomenon for the Seiberg--Witten equation with two spinors in detail.

\citet{Donaldson2009} proposed that there should be a similar wall-crossing phenomenon for the signed count of $\Gtwo$--instantons over a $\Gtwo$--manifold.
The number of $\Gtwo$--instantons jumps due to the appearance of Fueter sections supported on $3$--dimensional associative submanifolds of the $\Gtwo$--manifold.
This is the basis of the conjectural relationship between Seiberg--Witten equations on $3$--manifolds and enumerative theories for associative submanifolds and $\Gtwo$--instantons.
Donaldson and Segal's prediction was partially confirmed in \cite{Walpuski2013a};
our \autoref{Thm_GenericOneParameterFamilies} can be understood as a Seiberg--Witten analog of this result.


\paragraph{Acknowledgements.}
This material is based upon work supported by \href{https://www.nsf.gov/awardsearch/showAward?AWD_ID=1754967&HistoricalAwards=false}{the National Science Foundation under Grant No.~1754967}
and
\href{https://sites.duke.edu/scshgap/}{the Simons Collaboration Grant on ``Special Holonomy in Geometry, Analysis and Physics''}.

\section{Main results}

For the reader's convenience, before stating our main results, we begin by reviewing the necessary background on Seiberg--Witten equations associated with quaternionic representations.

\subsection{Hyperkähler quotients of quaternionic vector spaces}

\begin{definition}
  A \defined{quaternionic Hermitian vector space} is a real vector space $S$ together with a linear map $\gamma\co \Im \H \to \End(S)$ and an inner product $\inner{\cdot}{\cdot}$ such that $\gamma$ makes $S$ into a left module over the quaternions $\H = \R\Span{1,i,j,k}$, and $i,j,k$ act by isometries.
  The \defined{unitary symplectic group} $\Sp(S)$ is the subgroup of $\GL(S)$ preserving $\gamma$ and $\inner{\cdot}{\cdot}$.
\end{definition}

Let $G$ be a compact, connected Lie group.

\begin{definition}
  A \defined{quaternionic representation} of $G$ is a Lie group homomorphism $\rho\co G \to \Sp(S)$ for some quaternionic Hermitian vector space $S$.
\end{definition}

Suppose that a quaternionic representation $\rho\co G \to \Sp(S)$ has been fixed.
By slight abuse of notation, we also denote the induced Lie algebra representation by $\rho\co \fg \to \sp(V)$.
We combine $\rho$ and $\gamma$ into the map $\bar\gamma\co \fg\otimes\Im\H \to \End(S)$ defined by
\begin{equation*}
  \bar\gamma(\xi\otimes v)\Phi \coloneq \rho(\xi)\gamma(v)\Phi.
\end{equation*}
The map $\bar\gamma$ takes values in symmetric endomorphisms of $S$.
Denote the adjoint of $\bar\gamma$ by $\bar\gamma^*\co \End(S) \to (\fg\otimes \Im\H)^*$.

\begin{prop}
  \label{Prop_CanonicalMomentMap}
  The map $\mu \co S \to (\fg \otimes \Im \H)^*$ defined by
  \begin{equation}
    \label{Eq_CanonicalMomentMap}
    \mu(\Phi) \coloneq \frac12\bar\gamma^*(\Phi\Phi^*)
  \end{equation}
  with $\Phi^* \coloneq \inner{\Phi}{\cdot}$ is a \defined{hyperkähler moment map}, that is, it is $G$--equivariant, and
  \begin{equation*}
    \inner{(d\mu)_\Phi\phi}{\xi\otimes v}
    =
      \inner{\gamma(v)\rho(\xi)\Phi}{\phi}
  \end{equation*}
  for all $\xi \in \fg$ and $v \in \Im\H$.
\end{prop}

This is a straightforward calculation. 
Nevertheless, it leads to an important conclusion: there is a hyperkähler orbifold naturally associated with the quaternionic representation.

\begin{definition}
  \label{Def_Regular}
  We call $\Phi \in S$ \defined{regular} if $(d\mu)_\Phi \co T_\Phi S \to (\fg \otimes \Im\H)^*$ is surjective.
  Denote by $S^\reg$ the open cone of regular elements of $S$.
\end{definition}

By \citet[Section 3(D)]{Hitchin1987},
\begin{equation*}
  X
  \coloneq
  S^\reg \hkred G
  \coloneq
  \(\mu^{-1}(0) \cap S^\reg\)/ G
\end{equation*}
is a hyperkähler orbifold;
see also \autoref{Prop_HyperkahlerQuotient}.
For psychological convenience, we want to make the assumption that $X$ is, in fact, a hyperkähler manifold. 
It will be important later that $X$ is a cone; that is, it carries a free $\R^+$--action.

The following summarizes the algebraic data required to write a Seiberg--Witten equation.

\begin{definition}
  \label{Def_AlgebraicData}
  A set of \defined{algebraic data} consists of:
  \begin{enumerate}
  \item
    a quaternionic Hermitian vector space $(S,\gamma,\inner{\cdot}{\cdot})$,
  \item
    a compact Lie group $H$ and a closed, connected, normal subgroup $G\nsub H$ such that $G$ acts freely on $\mu^{-1}(0)\cap S^\reg$,
  \item
    an $\Ad$--invariant inner product on $\Lie(H)$, and
  \item
    a quaternionic representation $\rho\co H \to \Sp(S)$.
  \end{enumerate}
\end{definition}

\begin{definition}
  Given a set of algebraic data as in \autoref{Def_AlgebraicData},
  the group $K \coloneq H/G$ is called the \defined{flavor symmetry group}.
\end{definition}

The groups $G$ and $K$ play different roles: $G$ is the structure group of the equation, whereas $K$ consists of any additional symmetries, which can be used to twist the setup or remain as symmetries of the theory.
On first reading, the reader should feel free to assume for simplicity that $H = G \times K$, or even that $K$ is trivial.

\subsection{The Seiberg--Witten equation}
\label{Sec_SeibergWittenEquation}

Let $M$ be a closed, connected $3$--manifold.

\begin{definition}
  \label{Def_GeometricData}
  A set of \defined{geometric data} on $M$ compatible with a set of algebraic data as in \autoref{Def_AlgebraicData} consists of:
  \begin{enumerate}
  \item
    a Riemannian metric $g$,
  \item
    a spin structure $\fs$,
  \item
     a principal $H$--bundle $Q \to M$,%
     \footnote{%
       The following observation is due to \citet[Section 3.1]{Haydys2013} and becomes important when formulating the Seiberg--Witten equation in dimension four.
       Suppose there is a homomorphism $\Z_2 \to Z(H)$ such that the non-unit in $\Z_2$ acts through $\rho$ as $-1$.
       Set $\hat H \coloneq (\Sp(1)\times H)/\Z_2$.
       All of the constructions in \autoref{Sec_SeibergWittenEquation} go through with $\fs\times Q$ replaced by a $\hat H$--principal bundle $\hat Q$.
       In the classical Seiberg--Witten theory, this corresponds to endowing the manifold with a spin$^c$ structure rather than a spin structure and a $U(1)$--bundle.
     }
     and
   \item
     a connection $B$ on the principal $K$--bundle
     \begin{equation*}
       R \coloneq Q\times_H K.
     \end{equation*}
  \end{enumerate}
\end{definition}

Suppose that a set of geometric data as in \autoref{Def_GeometricData} has been fixed.
Left-multiplication by unit quaternions defines an action $\theta\co \Sp(1) \to \O(S)$ such that
\begin{equation*}
  \theta(q) \gamma(v)\Phi = \gamma(\Ad(q)v)\theta(q)\Phi
\end{equation*}
for all $q \in \Sp(1) = \set{ q \in \H : \abs{q} = 1 }$, $v \in \Im\H$, and $\Phi \in S$.
This can be used to construct various bundles and operations as follows.

\begin{definition} 
  The \defined{spinor bundle} is the vector bundle
  \begin{equation*}
    \fS \coloneq (\fs \times Q) \times_{\Sp(1) \times H} S.
  \end{equation*}
  Since $T^*M \iso \fs \times_{\Sp(1)} \Im \H$, it inherits a \defined{Clifford multiplication} $\gamma \co T^*M \to \End(\fS)$.
\end{definition}

\begin{definition}
  Denote by $\sA(Q)$ the space of connections on $Q$.
  Set
  \begin{equation*}
    \sA_B(Q) \coloneq \set{ A \in \sA(Q) : \text{$A$ induces $B$ on $R$} }.
  \end{equation*}
  $\sA_B(Q)$ is an affine space modeled on $\Omega^1(M,\fg_P)$ with $\fg_P$ denoting the \defined{adjoint bundle} associated with $\Lie(G)$, that is,
  \begin{equation*}
    \fg_P \coloneq Q \times_{\Ad} \Lie(G).%
    \footnote{
      If $H = G \times K$, then the $G$--bundle $P$ alluded to in this notation does exist.
      In general, it does not exist but traces of it do---e.g., its adjoint bundle and its gauge group.
    }
  \end{equation*}
\end{definition}

\begin{definition}
  Every $A \in \sA_B(Q)$ defines a covariant derivative $\nabla_A \co \Gamma(\fS) \to \Omega^1(M,\fS)$.
  The \defined{Dirac operator} associated with $A$ is the linear map $\slD_A\co \Gamma(\fS) \to \Gamma(\fS)$ defined by
  \begin{equation*}
    \slD_A \Phi \coloneq \gamma(\nabla_A\Phi).
  \end{equation*}
\end{definition}

\begin{definition}  
  The hyperkähler moment map $\mu\co S \to (\Im\H\otimes\fg)^*$ induces a map
  \begin{equation*}
    \mu \co \fS \to \Lambda^2 T^*M \otimes \fg_P
  \end{equation*}
  since $(T^*M)^* \iso \Lambda^2 T^*M$.
\end{definition}

Denoting by
\begin{equation*}
  \varpi \co \fg_Q \to \fg_P
\end{equation*}
the projection induced by $\Lie(H) \to \Lie(G)$, we are finally in a position to state the equation we wish to study.

\begin{definition}
  The \defined{Seiberg--Witten equation} associated with the chosen geometric data is the following system of differential equations for $(\Phi,A) \in \Gamma(\fS) \times \sA_B(Q)$:
   \begin{equation}
    \label{Eq_SeibergWitten}
    \begin{split}
      \slD_{A}\Phi 
      &= 0 \qand \\
      \varpi F_A 
      &=  \mu(\Phi).
    \end{split}
  \end{equation}
\end{definition}

Most of the well-known equations of mathematical gauge theory on $3$-- and $4$--manifolds can be obtained as a Seiberg--Witten equation.%
\footnote{%
  In fact, if we allow the Lie groups and the representations  to be infinite-dimensional, we can also recover (special cases of) the $\Gtwo$-- and $\Spin(7)$--instanton equations \cite[Section 4.2]{Haydys2011}.
}

\begin{example}
  \label{Ex_ClassicalSeibergWitten}
  $S = \H$ and $\rho\co \U(1) \to \H$ acting by right-multiplication with $e^{i\theta}$ leads to the \defined{classical Seiberg--Witten equation} in dimension three.
\end{example}

For further examples, we refer the reader to \autoref{Sec_Examples}.

The Seiberg--Witten equation is invariant with respect to gauge transformations which preserve the flavor bundle $R$.

\begin{definition}
  The \defined{group of restricted gauge transformations}
  is
  \begin{equation*}
    \sG(P)
    \coloneq
    \set*{ u \in \sG(Q) : u \text{ acts trivially on } R }.
  \end{equation*}
\end{definition}

$\sG(P)$ is an infinite dimensional Lie group with Lie algebra $\Omega^0(M,\fg_P)$;
it acts on $\Gamma(\fS)\times\sA_B(Q)$, and preserves the space of solutions of \eqref{Eq_SeibergWitten}.

The main object of our study is the space of solutions to \eqref{Eq_SeibergWitten} modulo restricted gauge transformations.
This space depends on the geometric data chosen as in \autoref{Def_GeometricData}.
The topological part of the data, the bundles $\fs$ and $H$, will be fixed.
The remaining parameters of the equations, the metric $g$ and the connection $B$, will be allowed to vary. 
\begin{definition}
  Let $\Met(M)$ be the space of Riemannian metrics on $M$.
  The \defined{parameter space} is
  \begin{equation*}
    \sP \coloneq \Met(M)\times\sA(R).
  \end{equation*}
\end{definition}

\begin{definition}
  For $\bp = (g,B) \in \sP$, the \defined{Seiberg--Witten moduli space} is
  \begin{equation*}
    \fM_\SW(\bp)
    \coloneq
    \set*{
      [(\Phi,A)] \in \frac{\Gamma(\fS) \times \sA_B(Q)}{\sG(P)}
      :
      \begin{array}{@{}l@{}}
        (\Phi,A) \text{ satisfies } \eqref{Eq_SeibergWitten} \\
        \text{with respect to } g \text{ and } B
      \end{array}
    }.
  \end{equation*}
  The \defined{universal Seiberg--Witten moduli space} is
  \begin{equation*}
    \fM_\SW \coloneq
    \set*{
      (\bp,[(\Phi,A)]) \in \sP\times\frac{\Gamma(\fS)\times\sA(Q)}{\sG(P)}
      :
      [(\Phi, A)] \in \fM_\SW(\bp)
    }.
  \end{equation*}
\end{definition}

The Seiberg--Witten moduli spaces are endowed with the quotient topology induced from the $C^{\infty}$--topology on the spaces of connections and sections.
As we will explain in \autoref{Sec_DeformationTheoryAwayFromZero}, if $\fc_0$ is a solution of \eqref{Eq_SeibergWitten} for some $\bp_0 \in \sP$, then the deformation theory of \eqref{Eq_SeibergWitten} at $(\bp_0,\fc_0)$ is controlled by a differential graded Lie algebra (DGLA).
Associated with this DGLA is a formally-self adjoint elliptic operator $L_{\bp,\fc}$, which can be understood as a gauge fixed and co-gauge fixed linearization of \eqref{Eq_SeibergWitten}.
These operators equip $\fM_\SW$ with a real line bundle $\det L$ such that for each  $(\bp,[\fc]) \in \fM_\SW$ we have
\begin{equation*}
  (\det L)_{(\bp,[\fc])} \iso \det \ker L_{\bp,\fc} \otimes (\coker L_{\bp,\fc})^*.
\end{equation*}
The fact that the operators $L_{\bp,\fc}$ are Fredholm allows us to construct finite dimensional models of $\fM_\SW$ by standard methods.

\begin{prop}
  \label{Prop_SeibergWittenKuranishiModel}
  If $\fc_0$ is a solution of \eqref{Eq_SeibergWitten} for $\bp_0 \in \sP$ and $\fc_0$ is irreducible,%
  \footnote{%
    We say that $\fc_0$ is irreducible if  $\Gamma_{\fc_0} \coloneq \set{ u \in \sG(P) : u\fc_0 = \fc_0 } = \set{ \id }$,
    see \autoref{Def_Irreducible}.
    There is a natural generalization of \autoref{Prop_SeibergWittenKuranishiModel} to the case when $\fc_0$ is reducible. 
    Then $\Gamma_{\fc_0}$ acts on $U$ and $O$ and $\rm{ob}$ can be chosen to be $\Gamma_{\fc_0}$-equivariant, cf. \cite[Section 4.2.5]{Donaldson1990}.
    However, in this paper we focus on neighborhoods of infinity of the moduli space, and as we will see those contain only irreducible solutions.
  }
  then there is a \defined{Kuranishi model} for a neighborhood of $(\bp_0,[\fc_0]) \in \fM_\SW$; that is:
  there are an open neighborhood of $U$ of $\bp_0 \in \sP$, finite dimensional vector spaces $I$ and $O$ of the same dimension, an open neighborhood $\sI$ of $0 \in I$, a smooth map
  \begin{equation*}
    \ob \co U\times \sI \to O,
  \end{equation*}
  an open neighborhood $V$ of $(\bp_0,[\fc_0]) \in \fM_\SW$, and a homeomorphism
  \begin{equation*}
    \fx \co \ob^{-1}(0) \to V\subset \fM_\SW,
  \end{equation*}
  which maps $(\bp_0,0)$ to $(\bp_0,[\fc_0])$ and commutes with the projections to $\sP$.
  Moreover, for each $(\bp,\fc) \in \im\fx$, there is an exact sequence
  \begin{equation*}
    0 \to \ker L_{\bp,\fc} \to I
    \xrightarrow{\rd_I\ob} O \to \coker L_{\bp,\fc} \to 0
  \end{equation*}
  such that the induced maps $\det L_{\bp,\fc} \to \det(I)\otimes\det(O)^*$ define an isomorphism of line bundles $\det L \iso \fx_*(\det(I)\otimes\det(O)^*)$ on $\im \fx \subset \fM_\SW$.
\end{prop}

\subsection{The blown-up equation and the Haydys correspondence}
\label{Sec_BlownUpSeibergWitten}

Unless $\mu^{-1}(0) = \set{0}$, the projection map $\fM_\SW \to \sP$ is not expected to be proper.
This potential non-compactness phenomenon is related to the lack of a priori bounds on $\Phi$ for $(\Phi,A)$ a solution of \eqref{Eq_SeibergWitten}.
With this in mind, we blow-up the equation \eqref{Eq_SeibergWitten};
cf.~\cites[Section 2.5]{Kronheimer2007}[Equation (1.4)]{Haydys2014}.

\begin{definition}
  The \defined{blown-up Seiberg--Witten equation} is the following differential equation for $(\epsilon,\Phi,A) \in [0,\infty)\times\Gamma(\fS)\times\sA_B(Q)$:
  \begin{equation}
    \label{Eq_BlownUpSeibergWitten}
    \begin{split}
      \slD_A\Phi 
      &=
        0, \\
      \epsilon^2 \varpi F_A 
      &=
        \mu(\Phi), \qand \\
      \Abs{\Phi}_{L^2}
      &=
        1.
    \end{split}
  \end{equation}
\end{definition}

Set
\begin{equation*}
  \fS^\reg \coloneq (\fs\times Q)\times_{\Sp(1)\times H} S^\reg.
\end{equation*}

\begin{definition}
  \label{Def_PartiallyCompactifiedModuliSpace}
  The \defined{partially compactified Seiberg--Witten moduli space} is
  \begin{multline*}
    \overline{\fM}_\SW(g,B)
    \coloneq
    \Bigg\{
      (\epsilon,[(\Phi,A)]) \in [0,\infty)\times\frac{\Gamma(\fS)\times\sA_B(Q)}{\sG(P)} :
      \begin{array}{@{}l@{}}
        (\epsilon,\Phi,A) \text{ satisfies } \eqref{Eq_BlownUpSeibergWitten} \\ \text{with respect to } g \text{ and } B; \\        
        \text{if $\epsilon = 0$, then } \Phi \in \Gamma(\fS^\reg)
      \end{array}
    \Bigg\}.
  \end{multline*}
  Likewise, the \defined{universal partially compactified Seiberg--Witten moduli space} is
  \begin{equation*}
    \overline\fM_\SW \coloneq
    \set*{
      (\bp,\epsilon,[(\Phi,A)]) \in \sP\times[0,\infty)\times \frac{\Gamma(\fS)\times\sA(Q)}{\sG(P)}
      :
      (\epsilon,[(\Phi,A)]) \in \overline\fM_\SW(\bp) 
    }.
  \end{equation*}
\end{definition}

The partially compactified Seiberg--Witten moduli spaces are also naturally topological spaces.
The formal boundary of $\overline{\fM}_\SW $ is
\begin{equation*}
  \del\fM_\SW \coloneq \set*{ (\bp,0,[(\Phi,A)]) \in \overline\fM_\SW },
\end{equation*}
and the map
\begin{equation*}
  \overline\fM_\SW\setminus\del\fM_\SW \to \fM_\SW, \quad
  (\bp,\epsilon,[(\Phi,A)]) \mapsto (\bp,[(\epsilon^{-1}\Phi,A)])
\end{equation*}
is a homeomorphism.
This justifies the term ``partially compactified''.

\begin{warning}
  \label{Rem_SingularSet}  
  The space $\overline{\fM}_\SW(g,B)$ need not be compact.
  From work of \citet{Taubes2012} on \autoref{Ex_GCFlatness} with $G=\SO(3)$ and work of \citet{Haydys2014} on \autoref{Ex_ADHMSeibergWitten} with $k=1$, we expect that the actual compactification will also contain singular solutions of \autoref{Eq_BlownUpSeibergWitten} with $\epsilon = 0$;
  see \cite{Doan2017c}.
  In fact, $\del\fM_\SW$ need not be compact \cite{Walpuski2015a}.
  Precisely understanding the full compactifications is one of the central challenges in this subject.
\end{warning}

For $\epsilon = 0$, \eqref{Eq_BlownUpSeibergWitten} appears to be degenerate.
However, since $\Phi \in \Gamma(\fS^\reg)$, this equation can be understood as an elliptic PDE as follows.

\begin{definition}
  The \defined{bundle of hyperkähler quotients} $\pi\co \fX\to M$ is
  \begin{equation*}
    \fX \coloneq (\fs\times R) \times_{\Sp(1)\times K} X.
  \end{equation*}
  Its \defined{vertical tangent bundle} is
  \begin{equation*}
    V\fX \coloneq (\fs\times R) \times_{\Sp(1)\times K} TX,
  \end{equation*}
  and $\gamma \co \Im \H \to \End(S)$ induces a \defined{Clifford multiplication} $\gamma \co \pi^*TM \to \End(V\fX)$.
\end{definition}
  
\begin{definition}
  Using $B \in \sA(R)$ we can assign to each $s \in \Gamma(\fX)$ its covariant derivative $\nabla_B s \in \Omega^1(M,s^*V\fX)$.
  A section $s \in \Gamma(\fX)$ is called a \defined{Fueter section} if it satisfies the \defined{Fueter equation}
  \begin{equation}
    \label{Eq_Fueter}
    \fF(s) = \fF_B(s) \coloneq \gamma(\nabla_B s) = 0 \in \Gamma(s^*V\fX).
  \end{equation}
  The map $s \mapsto \fF(s)$ is called the \defined{Fueter operator}.%
  \footnote{%
    In the following, we will suppress the subscript $B$ from the notation.
  }
\end{definition}

An elementary but important calculation shows that a pair $(\Phi,A) \in \Gamma(\fS^\reg)\times \sA_B(Q)$ satisfies $\slD_A\Phi = 0$ and $\mu( \Phi )  = 0$ if and only if the projection $s \coloneq p\circ \Phi \in \Gamma(\fX)$ satisfies $\fF(s) = 0$.
This is part of the Haydys correspondence, which will be discussed in more detail in \autoref{Sec_FueterSections}.

The linearized Fueter operator $(\rd\fF)_s \co \Gamma(s^*V\fX) \to \Gamma(s^*V\fX)$ is a formally self-adjoint elliptic differential operator of order one.
In particular, it is Fredholm of index zero. 
However, the space of solutions to $\fF(s) = 0$, if non-empty, is never zero-dimensional.
The reason is that the hyperkähler quotient $X = S^{\reg} \hkred G$ carries a free $\R^+$--action inherited from the vector space structure on $S$. 
This induces a fiber-preserving action of $\R^+$ on $\fX$. 
One easily verifies that, for $\lambda \in \R^+$ and $s \in \Gamma(\fX)$,
\begin{equation}
  \label{Eq_FueterScaling}
  \fF( \lambda s  ) = \lambda \fF( s ).
\end{equation} 
As a result, $\R^+$ acts freely on the space of solutions to \eqref{Eq_Fueter} which shows that Fueter sections come in one-parameter families.
At the infinitesimal level, this shows that every Fueter section is obstructed.

\begin{definition}
  The \defined{radial vector field} $\hat v \in \Gamma(\fX, V \fX)$ is the vector field generating the $\R^+$--action on $\fX$.
\end{definition}
Differentiating \eqref{Eq_FueterScaling} shows that if $s$ is a Fueter section, then  $\hat v \circ s \in \Gamma(s^* V \fX)$ is a non-zero element of $\ker (\rd\fF)_s$.

\subsection{Kuranishi models for \texorpdfstring{$\overline\fM_\SW$}{the partially compactified moduli space}}

The main result of this article is the construction of Kuranishi models for $\overline\fM_\SW$ centered at points of $\del\fM_\SW$.

\begin{theorem}
  \label{Thm_KuranishiModelNearZero}
  Let $\bp_0 = (g_0,B_0) \in \sP$ and $\fc_0 = (\Phi_0,A_0) \in \Gamma(\fS^\reg)\times\sA_B(Q)$ be such that $(\bp_0,0,[\fc_0]) \in \del\fM_\SW$.
  Denote by $s_0 = p \circ \Phi_0 \in \Gamma(\fX)$ the corresponding Fueter section of $\fX$.
  Set
  \begin{equation*}
    I_\del \coloneq \ker (\rd \fF)_{s_0} \cap (\hat v\circ s)^\perp \qandq
    O \coloneq \coker (\rd \fF)_{s_0}.
  \end{equation*}
  Let $r \in \N$.
  
  There exist an open neighborhood $\sI_\del$ of $0 \in I_\del$,
  a constant $\epsilon_0 > 0$,
  an open neighborhood $U \subset \sP$ of $\bp_0$,
  a $C^{2r-1}$ map
  \begin{equation*}
    \ob \co U\times [0,\epsilon_0) \times \sI_\del \to O,
  \end{equation*}
  an open neighborhood $V$ of $(\bp_0,0,[\fc_0]) \in \overline{\fM}_\SW$, and a homeomorphism
  \begin{equation*}
    \fx\co \ob^{-1}(0) \to V \subset \overline{\fM}_\SW
  \end{equation*}
  such that the following hold:
  \begin{enumerate}
  \item
    \label{Thm_KuranishiModelNearZero_Expansion}
    There are smooth functions
    \begin{equation*}
      \ob_\del, \widehat\ob_1, \ldots, \widehat\ob_r \co U\times \sI_\del \to O
    \end{equation*}
    such that for all $m,n \in \N_0$ with $m+n \leq 2r$ we have
    \begin{equation*}
      \Abs*{\nabla_{U\times\sI_\del}^m\del_\epsilon^n\left(    
          \ob - \ob_\del - \sum\nolimits_{i=1}^r \epsilon^{2i}\widehat\ob_i
        \right)}_{C^0} = O(\epsilon^{2r-n+2}).
    \end{equation*}
  \item
    The map $\fx$ commutes with the projection to $\sP \times [0,\infty)$ and satisfies 
    \begin{equation*} 
      \fx(\bp_0,0,0) = (\bp_0,0,[\fc_0]).
    \end{equation*}
  \item
    For each $(\bp,\fc) \in \im \fx \cap \fM_\SW$, the solution $\fc$ is irreducible;
    moreover, it is unobstructed if $\rd_I\ob$ is surjective.
  \end{enumerate}
\end{theorem}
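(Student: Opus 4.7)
The plan is to reduce the blown-up equation to a finite-dimensional problem by a slice construction, apply an implicit function theorem uniformly in $\epsilon$, and then extract the $\epsilon^2$--expansion iteratively.

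First I would slice the configuration space near $(\Phi_0, A_0)$. Using the Haydys correspondence, a nearby spinor in $\Gamma(\fS^\reg)$ decomposes uniquely as $\Phi = \Psi(s, \psi)$, where $s \in \Gamma(\fX)$ parameterizes the ``Fueter direction'' and $\psi$ is a perturbation normal to the fibers of the projection $\mu^{-1}(0) \cap S^\reg \to X$. Write $A = A_0 + a$ in a Coulomb slice for $\sG(P)$. The overall $\R^+$--scaling is fixed by the normalization $\Abs{\Phi}_{L^2} = 1$. In these coordinates the linearized gauge-fixed blown-up equation at $(s_0, 0, 0, 0)$ splits into two blocks: a Fueter block equal to $(\rd\fF)_{s_0}$ acting on variations of $s$, and a transverse block acting on $(\psi, a)$. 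The regularity hypothesis $\Phi_0 \in \Gamma(\fS^\reg)$ ensures that $(\rd\mu)_{\Phi_0}$ is surjective, which makes the transverse block uniformly elliptic and invertible as $\epsilon \to 0$.

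Next I would apply the implicit function theorem in the $(\psi, a)$--variables uniformly in $\bp \in U$ and $\epsilon \in [0, \epsilon_0)$ to solve the transverse block, producing $(\psi, a)$ as a function of $(s, \bp, \epsilon)$. On the Fueter side, I would invert $(\rd\fF)_{s_0}$ on the orthogonal complement of its kernel by another implicit function theorem, reducing to a finite-dimensional equation on $\ker(\rd\fF)_{s_0}$. The radial element $\hat v \circ s_0$, which by differentiation of \eqref{Eq_FueterScaling} always lies in this kernel, is eliminated by the normalization condition, leaving an open neighborhood of $0 \in I_\del$ as the parameter space. The residual component of the equation projects onto $O = \coker(\rd\fF)_{s_0}$ and defines $\ob$; its zero set is homeomorphic via the inverse of the slice parameterization to an open neighborhood of $(\bp_0, 0, [\fc_0])$ in $\overline\fM_\SW$. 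Irreducibility is automatic at a spinor in $\Gamma(\fS^\reg)$ because $G$ is assumed to act freely on $\mu^{-1}(0) \cap S^\reg$ in \autoref{Def_AlgebraicData}.

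The expansion in (1) is produced by a formal power series ansatz. Since $\epsilon$ enters the Seiberg--Witten equation only through the quadratic term $\epsilon^2 \varpi F_A = \mu(\Phi)$, one can recursively construct $(\psi_i, a_i)$ solving the equation modulo $\epsilon^{2(i+1)}$ by inverting the transverse block at each order, and collect the residual projections onto $O$ as the smooth maps $\ob_\del$ and $\widehat\ob_i$. Matching this formal expansion to the actual solution produced by the implicit function theorem yields both the remainder estimate $O(\epsilon^{2r-n+2})$ and $C^{2r-1}$ regularity in $\epsilon$. The main obstacle is ensuring uniform control as $\epsilon \to 0$: the Seiberg--Witten linearization is genuinely degenerate at $\epsilon = 0$, and one must prove that this degeneracy is entirely along the Haydys slice (i.e.\ precisely captured by the Fueter operator on $\Gamma(s^*V\fX)$) while the transverse block admits a uniformly bounded inverse on appropriate Sobolev spaces. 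Establishing this uniform ellipticity, together with careful bookkeeping of $\epsilon$--derivatives in the formal expansion, is the technical heart of the construction.
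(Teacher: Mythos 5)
Your outline correctly identifies the three-stage strategy (slice, uniform invertibility of the transverse block, formal $\epsilon^2$--expansion), and your remark that "the degeneracy is entirely along the Haydys slice while the transverse block admits a uniformly bounded inverse" is exactly the right picture. But this is where the proposal stops short of a proof: you flag uniform invertibility of the transverse block as "the technical heart" and then assert it follows because regularity of $\Phi_0$ makes the block "uniformly elliptic and invertible as $\epsilon \to 0$." That statement is false as written. The relevant operator on the gauge variables is of the schematic form $\epsilon^2\delta_{A_0} + (\text{zeroth order})$, which degenerates from first order to zeroth order as $\epsilon \to 0$; it is emphatically not uniformly elliptic on any fixed Sobolev scale, and the regularity of $\Phi_0$ does not remove the loss of derivatives. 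What actually resolves this is to equip the configuration space with $\epsilon$--anisotropic norms of the form $\Abs{(a,\xi)}_{W^{k,p}} + \epsilon\Abs{\nabla^{k+1}(a,\xi)}_{L^p} + \epsilon^2\Abs{\nabla^{k+2}(a,\xi)}_{L^p}$, to impose the $\epsilon$--dependent gauge condition $\epsilon^2\rd_{A_0}^*a - \rho^*(\phi\Phi_0^*) = 0$ (not the ordinary Coulomb slice, which you invoke), and to run a Schur complement argument: after Gauss elimination the transverse block reduces to $\fa^*\fa + \epsilon^2\delta_{A_0}^2 + \epsilon^2(\text{lower order})$, which is uniformly invertible precisely because $\fa^*\fa$ is positive definite (this is where regularity of $\Phi_0$ actually enters). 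None of this is present in your proposal.

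A smaller but real gap: you propose to solve the transverse variables $(\psi,a)$ by one implicit function theorem and then invert $(\rd\fF)_{s_0}$ separately by another. But the Fueter block and the transverse block are genuinely coupled — the decomposition $\fS = \fH_\Phi \oplus \fN_\Phi$ is not parallel for the lift connection, so the Dirac operator has off-diagonal second-fundamental-form terms $\gamma\rII$ and $\gamma\rII^*$. One cannot decouple the two implicit function theorem applications; the $3\times 3$ block operator must be treated as a whole, and the Schur complement that has to be inverted is $\fa + \epsilon^2(\slD_\fN + \gamma\rII\slD_\fH^{-1}\gamma\rII^*)(\fa^*)^{-1}\delta_{A_0}$, which sees both $\slD_\fH$ and the off-diagonal couplings. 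Once these two points are supplied, the remainder of your plan — the recursive construction of $\ob_\del, \widehat\ob_i$ by expanding in $\epsilon^2$ and the matching estimate — is a correct description of how the expansion in part (1) is obtained.
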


\begin{remark}
  The neighborhoods $\sI_\del$ and $U$ may depend on the choice of $r$.
\end{remark}

The difficulty in proving this theorem arises from the fact that the (gauge fixed and co-gauged fixed) linearization of \eqref{Eq_BlownUpSeibergWitten} appears to become degenerate as $\epsilon$ approaches zero.
The Haydys correspondence, however, indicates that one can reinterpret \eqref{Eq_BlownUpSeibergWitten} at $\epsilon = 0$ as the Fueter equation; in particular, as a non-degenerate elliptic PDE.
One can think of \autoref{Thm_KuranishiModelNearZero} as a gluing theorem for the Kuranishi model described in \autoref{Prop_SeibergWittenKuranishiModel} with a Kuranishi model for the moduli space of Fueter sections divided by the $\R^+$--action.

\subsection{Wall-crossing}

The main application of the work in this article---and our motivation for it---is to understand wall-crossing phenomena for signed counts of solutions to Seiberg--Witten equations arising from the non-compactness phenomenon mention in \autoref{Sec_BlownUpSeibergWitten}.
In the generic situation of \autoref{Thm_KuranishiModelNearZero}, one expects to have $\ker (\rd\fF)_{s_0} = \R\Span{\hat v\circ s_0}$.
In this case, if $\set{ \bp_t = (g_t,B_t) : t \in (-T,T)}$ is a $1$--parameter family in $\sP$, then (for $T \ll 1$) one can find a $1$--parameter family $\set{ (s_t) \in \Gamma(\fX) : t \in (-T,T)}$ of sections of $\fX$ and $\lambda \co (-T,T) \to \R$ with $\lambda(0) = 0$ such that
\begin{equation*}
  \fF_t(s_t) = \lambda(t) \cdot \hat v \circ s_t.
\end{equation*}

\begin{theorem}
  \label{Thm_GenericOneParameterFamilies}
  In the situation above and assuming $\dot\lambda(0) \neq 0$,
  for each $r \in \N$, there exist $\epsilon_0 > 0$ and
  $C^{2r-1}$ maps $t \co [0,\epsilon_0) \to (-T,T)$ and $\fc\co [0,\epsilon_0) \to \Gamma(\fS^\reg)\times \sA(Q)$ such that an open neighborhood $V$ of $(0,0,[\fc_0])$ in the parametrized Seiberg--Witten moduli space
  \begin{equation*}
    \set*{
      (t,\epsilon,[(\Phi,A)]) \in (-T,T)\times[0,\infty)\times \frac{\Gamma(\fS)\times\sA(Q)}{\sG(P)}
      :
      (\epsilon,[(\Phi,A)]) \in \overline\fM_\SW(\bp_t) 
    }
  \end{equation*}
  is given by
  \begin{equation*}
    V = \set{
      \(t(\epsilon),\epsilon,[\fc(\epsilon)]\) : \epsilon \in [0,\epsilon_0)
    }.
  \end{equation*}
  If $\fc(\epsilon) = (\Phi(\epsilon), A(\epsilon))$, then there is $\phi \in \Gamma(\fS)$ such that 
  \begin{equation*}
    \Phi(\epsilon) = \Phi_0 + \epsilon^2 \phi + O(\epsilon^4),
  \end{equation*}
  and with 
  \begin{equation*}
    \delta \coloneq \inner{\phi}{\slD_{A_0}\phi}_{L^2}
  \end{equation*}
  we have
  \begin{equation*}
    t(\epsilon) = \frac{\delta}{\dot\lambda(0)} \epsilon^4 + O(\epsilon^6).
  \end{equation*}
  For $\epsilon \in (0,\epsilon_0)$, $\fc(\epsilon)$ is irreducible;
  moreover, if $\delta \neq 0$, then $\fc(\epsilon)$ is unobstructed.
\end{theorem}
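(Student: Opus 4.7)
The strategy is to specialize \autoref{Thm_KuranishiModelNearZero} to the one-parameter family $\bp_t$ and then analyze the resulting scalar obstruction equation. The genericity hypothesis that $\ker(\rd\fF)_{s_0}$ is spanned by $\hat v\circ s_0$ forces $I_\del = \ker(\rd\fF)_{s_0}\cap(\hat v\circ s_0)^\perp = 0$, and formal self-adjointness of $(\rd\fF)_{s_0}$ then forces $\dim O = 1$, with a canonical generator induced by $\hat v\circ s_0$. Restricting the obstruction map supplied by \autoref{Thm_KuranishiModelNearZero} to the path $t\mapsto\bp_t$ and identifying $O\iso\R$ via this generator yields a $C^{2r-1}$ function
\begin{equation*}
  \ob\co(-T,T)\times[0,\epsilon_0)\to\R,
\end{equation*}
whose zero set, transported by $\fx$, is precisely the neighborhood $V$ we wish to describe.

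Part \eqref{Thm_KuranishiModelNearZero_Expansion} of \autoref{Thm_KuranishiModelNearZero} then supplies an expansion
\begin{equation*}
  \ob(t,\epsilon) = \ob_\del(t) + \epsilon^2\widehat\ob_1(t) + \epsilon^4\widehat\ob_2(t) + O(\epsilon^6).
\end{equation*}
The easy step is to identify $\ob_\del$: by construction of $s_t$ and $\lambda$, pairing $\fF_t(s_t) = \lambda(t)\hat v\circ s_t$ with the chosen cokernel generator gives $\ob_\del(t) = c\,\lambda(t)$ for a nonzero constant $c$, whence $\ob_\del'(0) = c\,\dot\lambda(0)\neq 0$.

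The core analytic work is to compute the first two $\epsilon$-corrections at $t=0$; I would show
\begin{equation*}
  \widehat\ob_1(0) = 0 \qandq \widehat\ob_2(0) = -c\,\delta.
\end{equation*}
Unpacking the construction behind \autoref{Thm_KuranishiModelNearZero}, $\widehat\ob_1(0)$ is the $L^2$-pairing of the order-$\epsilon^2$ residual of \eqref{Eq_BlownUpSeibergWitten} with the cokernel generator, which is (a multiple of) $\Phi_0$ viewed in $\Gamma(\fS)$. Using $\slD_{A_0}\Phi_0 = 0$, the skew-Hermiticity of Clifford multiplication, and the fact that $\bar\gamma$ takes values in symmetric endomorphisms, this pairing vanishes after integration by parts. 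To compute $\widehat\ob_2(0)$, I would push the expansion one order further: write $\Phi(\epsilon) = \Phi_0 + \epsilon^2\phi + \epsilon^4\psi + O(\epsilon^6)$ and $A(\epsilon) = A_0 + \epsilon^2 a_1 + O(\epsilon^4)$, use \eqref{Eq_BlownUpSeibergWitten} at order $\epsilon^2$ to determine $\phi$ and $a_1$ (in particular, $\slD_{A_0}\phi = -\gamma(\rho(a_1))\Phi_0$), and project the order-$\epsilon^4$ Dirac equation onto $\Phi_0$. The $\psi$-dependent terms drop out by another integration by parts against $\slD_{A_0}\Phi_0 = 0$, and the remaining contribution simplifies via $\gamma\rho = \rho\gamma$ to $\inner{\phi}{\slD_{A_0}\phi}_{L^2} = \delta$. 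Reconciling this naive order-by-order expansion with the precise Kuranishi obstruction built in \autoref{Thm_KuranishiModelNearZero} is the step I expect to be the main obstacle, as it requires carefully tracking the gauge-fixing and parametrix used to define $\ob$.

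Given these identifications, the equation $\ob(t,\epsilon)=0$ reads
\begin{equation*}
  c\,\dot\lambda(0)\,t - c\,\delta\,\epsilon^4 + O(t^2) + O(t\epsilon^2) + O(\epsilon^6) = 0,
\end{equation*}
and the implicit function theorem (applied with $\epsilon$ as parameter, using $\partial_t\ob(0,0)\neq 0$) yields a unique $C^{2r-1}$ solution $t = t(\epsilon)$ with $t(\epsilon) = \frac{\delta}{\dot\lambda(0)}\epsilon^4 + O(\epsilon^6)$. Setting $\fc(\epsilon)\coloneq\fx(t(\epsilon),\epsilon)$ produces the required family, and the asserted expansion of $\Phi(\epsilon)$ is built into $\fx$. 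Irreducibility of $\fc(\epsilon)$ for $\epsilon>0$ is immediate from \autoref{Thm_KuranishiModelNearZero}; when $\delta\neq 0$, the $\epsilon^4$-term dominates $\partial_\epsilon\ob$ along the solution locus, which, combined with $\partial_t\ob\neq 0$, forces surjectivity of the Kuranishi differential and hence $\coker L_{\bp,\fc} = 0$, so $\fc(\epsilon)$ is unobstructed.
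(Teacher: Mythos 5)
Your proposal follows the same strategy as the paper: specialize \autoref{Thm_KuranishiModelNearZero} to the path $\bp_t$, note that the genericity hypothesis forces $I_\del = 0$ and $O \cong \R$, expand $\ob$ in powers of $\epsilon^2$ with $\ob_\del(t) = \dot\lambda(0)t + O(t^2)$, and close with the implicit function theorem. That outer structure is correct.

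The step you yourself flag as ``the main obstacle'' --- justifying that a naive order-by-order expansion of \eqref{Eq_BlownUpSeibergWitten} actually computes the Kuranishi obstruction --- is precisely where the paper's proof lives, and your proposal leaves it unresolved. The paper's resolution is the explicit inverse of $\bar L_{\bp_0,0}$ in \eqref{Eq_L0Inverse}, which shows that the obstruction to solving $L_0\hat\fc = (\psi,b,\eta)$ reduces to $-\inner{\Phi_0}{\psi}_{L^2}$; the extra $\gamma\rII^*(\fa^*)^{-1}(b,\eta)$ contribution vanishes because $\Phi_0$ is a Fueter section and $(\fa^*)^{-1}$ maps into $\Gamma(\fN)$. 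Once this is in hand, $\widehat\ob_1(0)=0$ is immediate: the order-$\epsilon^2$ residual $\epsilon^2(0,*\varpi F_{A_0},0)$ has no $\Gamma(\fS)$-slot, so there is nothing to pair against $\Phi_0$. Your route via ``skew-Hermiticity of Clifford multiplication'' and integration by parts is off-target; the vanishing is structural, not a consequence of self-adjointness. For the $\epsilon^4$ coefficient, the paper uses \eqref{Eq_L0Inverse} to produce the formulas \eqref{Eq_Obstruction1}--\eqref{Eq_Obstruction2} for $(\phi,a,\xi)$ in terms of $\chi=(\fa^*)^{-1}*\varpi F_{A_0}$ and the second fundamental forms, and then manipulates $\fo=\inner{\Phi_0}{\bar\gamma(a)\phi}_{L^2}$ against the block form \eqref{Eq_DiracInBlockForm} of $\slD_{A_0}$ to get $-\delta$. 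Your shortcut --- pairing the order-$\epsilon^4$ Dirac residual with $\Phi_0$, dropping the $\psi$-term by $\slD_{A_0}\Phi_0=0$, and using symmetry of $\bar\gamma(a)$ with $\slD_{A_0}\phi=-\bar\gamma(a)\Phi_0$ --- does produce $-\delta$ for that particular pairing (and the order-$\epsilon^4$ connection and gauge corrections drop out because $\mu(\Phi_0)=0$ and $\rho$ acts skew-symmetrically), so you have the right computation in hand; but absent the explicit inverse you have not shown that this pairing equals $\widehat\ob_2(0)$ rather than some gauge-fixing-dependent variant, which is exactly the bookkeeping you deferred. Your unobstructedness argument (``the $\epsilon^4$-term dominates $\partial_\epsilon\ob$ \ldots forces surjectivity'') is also not quite a proof; what is needed is the exact sequence from \autoref{Lem_ZeroSetsOfFredholmMaps} together with $\partial_t\ob\neq 0$ to conclude $\coker L_{\bp_t,\fc(\epsilon)}=0$.
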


\begin{remark}
  In the situation of \autoref{Thm_GenericOneParameterFamilies}, there is no obstruction to solving the Seiberg--Witten equation to order $\epsilon^2$---in fact, a solution can be found rather explicitly.
  The obstruction to solving to order $\epsilon^4$ is precisely $\delta$.
\end{remark}

If $\fM_\SW$ is oriented (that is: $\det L \to \fM_\SW$ is trivialized) around $(\bp_0,[\fc_0])$, then identifying $\ker(\rd\fF)_{s_0} = \coker(\rd\fF)_{s_0} = \R\Span{\hat v\circ s}$ determines a sign $\sigma = \pm 1$.
If $\delta \neq 0$, then contribution of $[\fc(\epsilon)]$ should be counted with sign $-\sigma\cdot\sign(\delta)$;
as is discussed in \autoref{Sec_SeibergWittenOrientations}.
However, $\sign(\delta/\dot\lambda(0))$ also determines whether the solution $\fc(\epsilon)$ appears for $t < 0$ or $t > 0$.
Thus, the overall contributions from $\sign(\delta)$ cancel.

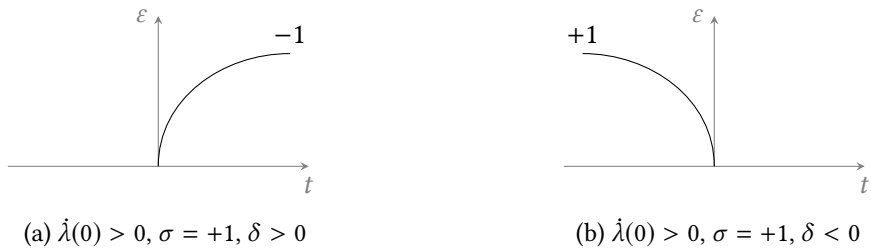
\begin{figure}[h]
  \centering
  \begin{subfigure}{.48\textwidth}
    \centering
    \begin{tikzpicture}
      \draw[-stealth,gray] (-2,0) -- (2,0) node[below]{$t$};
      \draw[-stealth,gray] (0,0) -- (0,2) node[left]{$\epsilon$};
      \draw (0,0) to [out=90,in=180] (1.75,1.5);
      \draw (1.75,1.5) node[above]{$-1$};
    \end{tikzpicture}
    \caption{$\dot\lambda(0) > 0$, $\sigma = +1$, $\delta > 0$}
    \label{Fig_TwoExamplesOfWallCrossingA}
  \end{subfigure}
  \begin{subfigure}{.48\textwidth}
    \centering
    \begin{tikzpicture}
      \draw[-stealth,gray] (-2,0) -- (2,0) node[below]{$t$};
      \draw[-stealth,gray] (0,0) -- (0,2) node[left]{$\epsilon$};
      \draw (0,0) to [out=90,in=0] (-1.75,1.5);
      \draw (-1.75,1.5) node[above]{$+1$};
    \end{tikzpicture}
    \caption{$\dot\lambda(0) > 0$, $\sigma = +1$, $\delta < 0$}
    \label{Fig_TwoExamplesOfWallCrossingB}
  \end{subfigure}
  \caption{Two examples of wall-crossing.}
  \label{Fig_TwoExamplesOfWallCrossing}
\end{figure}

This is illustrated in \autoref{Fig_TwoExamplesOfWallCrossing}, which depicts two examples of wall-crossing.
More precisely, it shows the projection of $\bigcup_{t\in(-T,T)} \overline{\fM}_\SW(\bp_t)$ on the $(t,\epsilon)$-plane.
In both cases we assume $\dot\lambda(0) > 0$ and $\sigma = +1$.
\autoref{Fig_TwoExamplesOfWallCrossingA} represents the case $\delta > 0$, in which a solution $\fc(\epsilon)$ with sign $\sign(\fc(\epsilon)) = -\sigma\cdot \sign(\delta) = -1$ is born at $t = 0$.
\autoref{Fig_TwoExamplesOfWallCrossingB} represents the case $\delta < 0$, in which $\sign(\fc(\epsilon)) = +1$ and the solution dies at $t = 0$.
In both cases, as we cross from $t < 0$ to $t > 0$ the signed count of solutions to the Seiberg--Witten equation changes by $-1$.


\section{Deformation theory of the Seiberg--Witten equation}
\label{Sec_DeformationTheoryAwayFromZero}

We begin with the deformation theory of the blown-up Seiberg--Witten equation away from $\epsilon = 0$, that is, with the deformation theory of the Seiberg--Witten equation itself.
All of this material is standard, but it will set the stage for what is to come.

\subsection{The Seiberg--Witten DGLA}

The deformation theory of the Seiberg--Witten equation is controlled by the following differential graded Lie algebra (DGLA).

\begin{definition}
  \label{Def_SWDGLA_Differential}
  Denote by $L^\bullet$ the graded real vector space given by
  \begin{align*}
    L^0 &\coloneq \Omega^0(M,\fg_P), \\
    L^1 &\coloneq \Gamma(\fS)\oplus\Omega^1(M,\fg_P), \\
    L^2 &\coloneq \Gamma(\fS)\oplus\Omega^2(M,\fg_P), \qand \\
    L^3 &\coloneq \Omega^3(M,\fg_P).
  \end{align*}
  Denote by $\LIE{\cdot}{\cdot} \co L^\bullet \otimes L^\bullet \to L^\bullet$ the graded skew-symmetric bilinear map defined by
  \begin{align*}
    \LIE{a}{b}
    &\coloneq
      \wlie{a}{a} && \text{for } a,b \in \Omega^\bullet(M,\fg_P), \\
    \LIE{\xi}{\phi}
    &\coloneq
      \rho(\xi)\phi &&\text{for } \xi \in \Omega^0(M,\fg_P) \text{ and } \phi \in \Gamma(\fS) \text{ in degree $1$ or $2$}, \\
    \LIE{a}{\phi}
    &\coloneq
      -\bar\gamma(a)\phi &&\text{for } a \in \Omega^1(M,\fg_P) \text{ and } \phi \in \Gamma(\fS) \text{ in degree $1$}, \\
    \LIE{\phi}{\psi}
    &\coloneq
      -2\mu(\phi,\psi) &&\text{for } \phi, \psi \in \Gamma(\fS) \text{ in degree $1$, and} \\
    \LIE{\phi}{\psi}
    &\coloneq
      -*\rho^*(\phi\psi^*) &&\text{for } \phi \in \Gamma(\fS) \text{ in degree $1$ and } \psi \in \Gamma(\fS) \text{ in degree 2}.
  \end{align*}
  Given $\fc = (\Phi,A) \in \Gamma(\fS)\times\sA_B(Q)$, define the degree one linear map $\delta^\bullet = \delta_\fc^\bullet \co L^\bullet \to L^{\bullet+1}$ by
  \begin{align*}
    \delta_\fc^0(\xi)
    &\coloneq
      \begin{pmatrix}
        -\rho(\xi)\Phi \\
        \rd_A\xi
      \end{pmatrix}, \\
    \delta_\fc^1(\phi,a)
    &\coloneq
      \begin{pmatrix}
        -\slD_A\phi - \bar\gamma(a)\Phi \\
        -2\mu(\Phi,\phi) + \rd_Aa
      \end{pmatrix}, \qandq \\
    \delta_\fc^2(\psi,b)
    &\coloneq
      *\rho^*(\psi\Phi^* ) + \rd_Ab.
  \end{align*}
\end{definition}

\begin{prop}
  \label{Prop_SWDGLA}
  The algebraic structures defined in \autoref{Def_SWDGLA_Differential} determine a DGLA which controls the deformation theory of the Seiberg--Witten equation; that is:
  ~
  \begin{enumerate}
  \item
    \label{Prop_SWDGLA_Lie}
    $(L^\bullet,\LIE{\cdot}{\cdot})$ is a graded Lie algebra.
  \item
    \label{Prop_SWDGLA_CompatibleWithD}
    If $\fc = (\Phi,A)$ is a solution of \eqref{Eq_SeibergWitten}, then $(L^\bullet,\LIE{\cdot}{\cdot},\delta_\fc^\bullet)$ is a DGLA.
  \item
    \label{Prop_SWDGLA_MaurerCartan}
    Suppose that $\fc = (\Phi,A)$ is a solution of \eqref{Eq_SeibergWitten}.
    For every $\hat\fc = (\phi,a) \in L^1$, $(\Phi+\phi,A+a)$ solves \eqref{Eq_SeibergWitten} if and only if it is a \defined{Maurer--Cartan element}, that is,
    $\delta_\fc \hat\fc + \frac12\LIE{\hat\fc}{\hat\fc} = 0$.
  \end{enumerate}
\end{prop}

The verification of \eqref{Prop_SWDGLA_Lie} and \eqref{Prop_SWDGLA_CompatibleWithD} is somewhat lengthy, and is deferred to \autoref{Sec_DGLA}.
Part \eqref{Prop_SWDGLA_MaurerCartan}, however, is straightforward.

\begin{definition}
  \label{Def_Irreducible}
  Let $\fc \in \Gamma(\fS)\times\sA_B(Q)$ be a solution of \eqref{Eq_SeibergWitten}.
  We call
  \begin{equation*}
    \Gamma_{\fc} \coloneq \set{ u \in \sG(P) : u\fc = \fc }
  \end{equation*}
  the group of \defined{automorphisms} of $\fc$.
  Its Lie algebra is the cohomology group  $H^0(L^\bullet,\delta_\fc)$; 
  $H^1(L^\bullet,\delta_\fc)$ is the space of \defined{infinitesimal deformations}, and $H^2(L^\bullet,\delta_\fc)$ the space of \defined{infinitesimal obstructions}.
  We say that $\fc$ is \defined{irreducible} if $\Gamma_{\fc} = 0$,
  and \defined{unobstructed} if $H^2(L^\bullet,\delta_\fc) = 0$.
\end{definition}

\begin{remark}
  $H^3(L^\bullet,\delta_\fc)$ has no immediate interpretation, but it is isomorphic to $H^0(L^\bullet,\delta_\fc)$, since the complex $(L^\bullet,\delta_\fc)$ is self-dual (up to signs).
  The latter also shows that $H^1(L^\bullet,\delta_\fc)$ is isomorphic to $H^2(L^\bullet,\delta_\fc)$.
\end{remark}

\subsection{The linearized Seiberg--Witten equation}
\label{Sec_LinearizedSeibergWittenEquation}

The operators
\begin{align*}
  \tilde\delta_\fc^0 
  &\coloneq
    \delta_\fc^0 \co \Omega^0(M,\fg_P) \to \Gamma(\fS)\oplus\Omega^1(M,\fg_P), \\
  \tilde\delta_\fc^1 
  &\coloneq
    (\id_\fS\oplus *)\circ\delta_\fc^1 \co \Gamma(\fS)\oplus\Omega^1(M,\fg_P) \to \Gamma(\fS)\oplus\Omega^1(M,\fg_P), \qand \\
  \tilde\delta_\fc^2 
  &\coloneq
    -* \circ ~\delta_\fc^2\circ (\id_\fS\oplus *) \co \Gamma(\fS)\oplus\Omega^1(M,\fg_P) \to \Omega^0(M,\fg_P)
\end{align*}
satisfy
\begin{equation*}
  (\tilde\delta_\fc^0)^* = \delta_\fc^2 \qandq
  (\delta_\fc^1)^* = \delta_\fc^1,
\end{equation*}
and $L_\fc \co \Gamma(\fS) \oplus \Omega^1(M,\fg_P)\oplus \Omega^0(M,\fg_P) \to  \Gamma(\fS) \oplus \Omega^1(M,\fg_P)\oplus \Omega^0(M,\fg_P)$ defined by
\begin{align*}
  L_\fc
  &\coloneq
    \begin{pmatrix}
      \tilde\delta_\fc^1 & \tilde\delta_\fc^0 \\
      \tilde\delta_\fc^2 & 0
    \end{pmatrix} \\
  &=
  \begin{pmatrix}
    -\slD_A & 0 & 0 \\
    0 & *\rd_A & \rd_A \\
    0 & \rd_A^* & 0
  \end{pmatrix}
  +
  \begin{pmatrix}
    0 & -\bar\gamma(\cdot)\Phi & -\rho(\cdot)\Phi \\
    -2*\mu(\Phi,\cdot) & 0 & 0 \\
    -\rho^*(\cdot\,\Phi^*) & 0 & 0
  \end{pmatrix}
\end{align*}
is formally self-adjoint and elliptic.

\begin{definition}
  We call $L_\fc$ the \defined{linearization }of the Seiberg--Witten equation at $\fc$.
\end{definition}

If $\fc$ is a solution of \eqref{Eq_SeibergWitten}, then Hodge theory identifies $H^1(L^\bullet,\delta_\fc)\oplus H^0(L^\bullet,\delta_\fc)$ with $\ker L_\fc$ and $H^2(L^\bullet,\delta_\fc)\oplus H^3(L^\bullet,\delta_\fc)$ with $\coker L_\fc$.
The fact that $(L^\bullet,\delta_\fc)$ is self-dual (up to signs) manifests itself as $L_\fc$ being formally self-adjoint.
After gauge fixing and co-gauge fixing, we can understand \eqref{Eq_SeibergWitten} as an elliptic PDE as follows.

\begin{prop}
  \label{Prop_ExtendedSeibergWitten}
  Given
  \begin{equation*}
    \fc_0 = (\Phi_0,A_0) \in \Gamma(\fS)\times\sA_B(Q),
  \end{equation*}
  define $Q\co \Gamma(\fS)\oplus\Omega^1(M,\fg_P)\oplus\Omega^0(M,\fg_P) \to \Gamma(\fS)\oplus\Omega^1(M,\fg_P)\oplus\Omega^0(M,\fg_P)$
  by
  \begin{equation*}
    Q(\phi,a,\xi)
    \coloneq
    \begin{pmatrix}
      -\bar\gamma(a)\phi \\
      \frac12*[a\wedge a] - *\mu(\phi) \\
      0
    \end{pmatrix},
  \end{equation*}
  $\fe_{\fc_0} \in \Gamma(\fS)\oplus\Omega^1(M,\fg_P)\oplus\Omega^0(M,\fg_P)$ by
  \begin{equation*}
    \fe_{\fc_0}
    \coloneq
    \begin{pmatrix}
      -\slD_{A_0} \Phi_0 \\
      *\varpi F_{A_0} - *\mu(\Phi_0) \\
      0
    \end{pmatrix},
  \end{equation*}
  and set
  \begin{equation*}
    \sw_{\fc_0}(\hat\fc) \coloneq L_{\fc_0}\hat \fc + Q_{\fc_0}(\hat\fc) + \fe_{\fc_0}.
  \end{equation*}

  There is a constant $\sigma > 0$ depending on $\fc_0$ such that,
  for every
  $\hat\fc = (\phi,a,\xi) \in \Gamma(\fS)\oplus\Omega^1(M,\fg_P)\oplus\Omega^0(M,\fg_P)$
  satisfying $\Abs{(\phi,a)}_{L^\infty} < \sigma$,
  the equation
  \begin{equation*}
    \sw_{\fc_0}(\hat\fc) = 0
  \end{equation*}
  holds if and only if
  $\fc_0 + (\phi,a)$ satisfies \eqref{Eq_SeibergWitten} and the gauge fixing condition
  \begin{equation}
    \label{Eq_SeibergWittenGaugeFixing}
    \rd_{A_0}^* a - \rho^*(\phi\Phi_0^*) = 0
  \end{equation}
  as well as
  \begin{equation*}
    \rd_{A_0}\xi = 0 \qandq \rho(\xi)\Phi_0 = 0;
  \end{equation*}
  moreover, if $\fc_0$ is infinitesimally irreducible (that is: $H^0(L^\bullet,\delta_{\fc_0}) = 0$), then $\xi = 0$.
\end{prop}

The proof requires a number of useful identities for $\mu$ which are summarized and proved in \autoref{Sec_UsefulFormulae}.

\begin{proof}
  Setting $\Phi \coloneq \Phi_0 + \phi$ and $A \coloneq A_0 + a$, the equation $\sw_{\fc_0}(\hat\fc) = 0$ amounts to
  \begin{align*}
    \slD_A\Phi + \rho(\xi)\Phi_0 &= 0, \\
    \varpi F_A + *\rd_{A_0}\xi &= \mu(\Phi), \qand \\
    \rd_{A_0}^* a - \rho^*(\phi\Phi_0^*) &= 0.
  \end{align*}
  Since
  \begin{equation*}
    \rd_A\mu(\Phi) = -*\rho^*\((\slD_A\Phi)\Phi^*\)
  \end{equation*}
  by \eqref{Eq_DMu}, applying $\rd_A$ to the second equation above and using the first equation we obtain
  \begin{equation*}
    \rd_{A_0}^*\rd_{A_0}\xi
    + \rho^*\((\rho(\xi)\Phi_0)\Phi_0^*\)
    - *[a\wedge*\rd_{A_0}\xi]
    + \rho^*\((\rho(\xi)\Phi_0)\phi^*\)
    = 0.
  \end{equation*}
  Taking the $L^2$ inner product with $\xi_0$, the component of $\xi$ in the $L^2$ orthogonal complement of $\ker\delta_{\fc_0}$ and integrating by parts yields that
  \begin{equation*}
    \Abs{\rd_{A_0}\xi}_{L^2}^2
    + \Abs{\rho(\xi)\Phi_0}_{L^2}^2
    =
      \inner{*[a\wedge*\rd_{A_0}\xi]}{\xi_0}_{L^2}
      - \inner{\rho(\xi)\Phi_0}{\rho(\xi_0)\phi}_{L^2}.
  \end{equation*}
  The right-hand side can be bounded by a constant $c > 0$ (depending on $\fc_0$) times
  \begin{equation*}
    \Abs{(a,\phi)}_{L^\infty}
    \(\Abs{\rd_{A_0}^*\xi}_{L^2}^2
    + \Abs{\rho(\xi)\Phi_0}_{L^2}^2\).
  \end{equation*}
  Therefore, if $\Abs{(a,\phi)}_{L^\infty} < \sigma \coloneq 1/c$, then
  \begin{equation*}
    \rd_{A_0}\xi = 0 \qandq
    \rho(\xi)\Phi_0 = 0.
  \end{equation*}
  It follows that $\hat\fc + (\phi,a)$ satisfies \eqref{Eq_SeibergWitten}.

  Since $\xi \in H^0(L^\bullet,\delta_{\fc_0})$, it vanishes if $\fc_0$ infinitesimally irreducible.
\end{proof}

The following standard observation shows that imposing the gauge fixing condition \eqref{Eq_SeibergWittenGaugeFixing} is mostly harmless, as long as we are only interested in small variations $\hat\fc$; c.f.~\cite[Proposition 4.2.9]{Donaldson1990}.

\begin{notation}
  In what follows we denote by $W^{k,p}\Gamma(\fS)$ the space of sections of $\fS$ of Sobolev class $W^{k,p}$.
  We use similar notations for spaces of connections, gauge transformations, and differential forms.
\end{notation}

\begin{prop}
  \label{Prop_SeibergWittenGaugeFixing}
  Fix $k \in \N$ and $p \in (1,\infty)$ with $(k+1)p > 3$. 
  Given
  \begin{equation*}
    \fc_0 = (\Phi_0,A_0) \in W^{k+1,p}\Gamma(\fS) \times W^{k+2,p}\sA_B(Q),
  \end{equation*} 
  there is a constant $\sigma>0$ such that if we set
  \begin{equation*}
    \fU_{\fc_0,\sigma}
    \coloneq
    \set*{
      \hat \fc \in B_\sigma(0) \subset W^{k+1,p}\Gamma(\fS) \times W^{k+2,p}\Omega^1(M,\fg_P)
      :
      \rd_{A_0}^* a - \rho^*(\phi\Phi_0^*) = 0
    },
  \end{equation*}
  then the map
  \begin{equation*}
    \fU_{\fc_0,\sigma}/\Gamma_{\fc_0} \ni
    [\hat\fc] 
    \mapsto
    [\fc_0+\hat\fc] \in
    \frac{W^{k+1,p}\Gamma(\fS) \times W^{k+2,p}\sA_B(Q)}{W^{k+3,p}\sG(P)}
  \end{equation*}
  is a homeomorphism onto its image;
  moreover, $\Gamma_{\fc_0+\hat\fc}$ is the stabilizer of $\hat\fc$ in $\Gamma_\fc$.
\end{prop}

For $\hat\fc = (\phi,a,\xi)$ and $(\Phi,A) = \fc = \fc_0 + (\phi,a)$, we have
\begin{equation*}
  (\rd\sw_{\fc_0})_{\hat\fc}
  =
  \begin{pmatrix}
    -\slD_A & 0 & 0 \\
    0 & *\rd_A & \rd_{A_0} \\
    0 & \rd_{A_0}^* & 0
  \end{pmatrix}
  +
  \begin{pmatrix}
    0 & -\bar\gamma(\cdot)\Phi & -\rho(\cdot)\Phi_0 \\
    -2*\mu(\Phi,\cdot) & 0 & 0 \\
    -\rho^*(\cdot\,\Phi_0^*) & 0 & 0
  \end{pmatrix}.
\end{equation*}
In particular, $(\rd\sw_{\fc_0})_{0}$ agrees with $L_{\fc_0}$.
The following proposition explains the relation between $(\rd\sw_{\fc_0})_{\hat\fc}$ and $L_{\fc}$ for $\fc = (\Phi,A,0) + \hat\fc$.

\begin{prop}
  \label{Prop_DSWvsL}
  In the situation of \autoref{Prop_SeibergWittenGaugeFixing}, if $\hat\fc \in \fU_{\fc_0,\sigma}$ and $\fc = \fc_0 + \hat\fc$,
  then there is a $\tau > 0$ and a smooth map $\phi_{\fc_0,\fc}\co B_\tau(\fc) \to B_\sigma(0)$ which maps $\fU_{\fc,\tau}$ to $\fU_{\fc_0,\sigma}$, commutes with the projection to $\sfrac{W^{k+1,p}\Gamma(\fS) \times W^{k+2,p}\sA_B(Q)}{W^{k+3,p}\sG(P)}$, and satisfies
  \begin{equation*}
    (\rd\phi)_{\fc}^{-1} (\rd\sw_{\fc_0})_{\hat\fc} (\rd\phi)_{\fc} = (\rd\sw_{\fc})_0 = L_\fc.
  \end{equation*}
\end{prop}

\subsection{Construction of Kuranishi models}

The method of the proof of \autoref{Prop_SeibergWittenKuranishiModel} is quite standard, c.f.~\cite[Section 4.2]{Donaldson1990}.
Fix $k \in \N$ and $p \in (1,\infty)$ with $(k+1)p > 3$.
Given $\bp = (g,B) \in \sP$, set
\begin{equation*}
  \fM_\SW^{k,p}(\bp)
  \coloneq 
  \set*{
    [(\Phi,A)] \in \frac{W^{k+1,p}\Gamma(\fS) \times W^{k+2,p}\sA_B(Q)}{W^{k+3,p}\sG(P)}
    :
    \begin{array}{@{}l@{}}
        (\Phi,A) \text{ satisfies } \eqref{Eq_SeibergWitten} \\
      \text{with respect to } g \text{ and } B
    \end{array}
  },
\end{equation*}
and define $\fM_\SW^{k,p}$ accordingly.
It is a consequence of elliptic regularity for $L_\fc$ and \autoref{Prop_SeibergWittenGaugeFixing}, that the inclusion $\fM_\SW \subset \fM_\SW^{k,p}$ is a homeomorphism.
This together with \autoref{Prop_ExtendedSeibergWitten} and \autoref{Prop_SeibergWittenGaugeFixing} implies that if $(\bp_0,[\hat\fc_0]) \in \fM_\SW$ is irreducible,
then there is a constant $\sigma > 0$ and an open neighborhood $U$ of $\bp \in \sP$ such that 
if $B_\sigma(0)$ denotes the open ball of radius $\sigma$ centered at $0$ in $    W^{k+1,p}\Gamma(\fS) \oplus W^{k+2,p}\Omega^1(M,\fg_P) \oplus W^{k+2,p}\Omega^0(M,\fg_P)$, then
\begin{equation*}
  \set{
    (\bp,\hat \fc) \in U\times B_\sigma(0)
    :
    \sw_{\bp,\fc_0}(\hat\fc) = 0
  } \ni (\bp,[(\phi,a,\xi)])
  \mapsto
  (\bp,[\fc+(\phi,a)]) \in \fM_\SW
\end{equation*}
is a homeomorphism onto its image.
Here we use subscripts to denote the dependence of $L_{\fc_0}$, $Q$, $\fe_{\fc_0}$, and $\sw_{\fc_0}$ on the parameter $\bp \in \sP$.
The proof of \autoref{Prop_SeibergWittenKuranishiModel} is completed by applying the following result to $\sw_{\bp,\fc_0}$ with $I = \ker L_{\bp_0,\fc_0}$ and $O = \coker L_{\bp_0,\fc_0}$.

\begin{lemma}
  \label{Lem_ZeroSetsOfFredholmMaps}
  Let $X$ and $Y$ be Banach spaces,
  let $U \subset X$ be a neighborhood of $0 \in X$,
  let $P$ be a Banach manifold, and
  let $F \co P\times U \to Y$ be a smooth map of the form
  \begin{equation*}
    F(p,x) = L(p,x) + Q(p,x) + \fe(p)
  \end{equation*}
  such that:
  \begin{enumerate}
  \item 
    $L$ is smooth, for each $p \in P$, $L_p \coloneq L(p,\cdot) \co X \to Y$ is a Fredholm operator, and we have $\sup_{p \in P} \Abs{L_p}_{\cL(X,Y)} < \infty$,
  \item
    $Q$ is smooth and there exists a $c_Q > 0$ such that, for all $x_1,x_2 \in X$ and all $p \in P$, we have
    \begin{equation}
      \label{Eq_QQuadraticEstimate}
      \Abs{Q(x_1,p)-Q(x_2,p)}_Y \leq c_Q\(\Abs{x_1}_X + \Abs{x_2}_X\)\Abs{x_1-x_2}_X,
    \end{equation}
    and
  \item
    $\fe \co P \to Y$ is smooth and there is a constant $c_\fe$ such that $\Abs{\fe}_Y \leq c_\fe$.
  \end{enumerate}

  Let $I \subset X$ be a finite dimensional subspace and let $\pi\co X \to I$ be a projection onto $I$.
  Let $O \subset Y$ be a finite dimensional subspace, let $\Pi\co Y \to O$ be a projection onto $O$, and denote by $\iota \co O \to Y$ the inclusion.
  Suppose that, for all $p \in P$, the operator $\bar L_p \co O \oplus X \to I \oplus Y$ defined by
  \begin{equation*}
    \bar L_p
    \coloneq
    \begin{pmatrix}
      0 & \pi \\
      \iota & L_p
    \end{pmatrix}
  \end{equation*}
  is invertible, and suppose that $c_R \coloneq \sup_{p \in P} \Abs{\bar L_p^{-1}}_{\cL(Y,X)} < \infty$.

  If $c_\fe \ll_{c_R,c_Q} 1$, then there is an open neighborhood $\sI$ of $0 \in I$,
  an open subset $V \subset P\times U$ containing $P\times \set{0}$, and
  a smooth map
  \begin{equation*}
    x \co P\times \sI \to X
  \end{equation*}
  such that, for each $(p,x_0) \in \sI \times P$, $(p,x(p,x_0))$ is the unique solution $(p,x)\in V$ of
  \begin{equation}
    \label{Eq_F=0}
    (\id_Y-\Pi) F(p,x) = 0 \qandq \pi x = x_0.
  \end{equation}
  In particular, if we define $\ob \co P\times\sI \to O$ by
  \begin{equation*}
    \ob(p,x_0) \coloneq \Pi F(p,x(p,x_0)),
  \end{equation*}
  then the map $\ob^{-1}(0) \to F^{-1}(0) \cap V$ defined by
  \begin{equation*}
    (p,x_0) \mapsto (p,x(p,x_0))
  \end{equation*}
  is a homeomorphism.
  Moreover, for every $(p,x_0) \in P\times\sI$ and $x = x(p,x_0)$, 
  we have an exact sequence
  \begin{equation*}
    0 \to \ker \del_x F(p,x) \to I \xrightarrow{\del_{x_0}\ob(p,x_0)} O \to \coker \del_xF(p,x) \to 0;
  \end{equation*}
  which induces an isomorphism $\det \del_xF \iso \det I\otimes(\det O)^*$.
\end{lemma}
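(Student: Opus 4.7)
The plan is to recast \eqref{Eq_F=0} as a parametric fixed-point equation on $X$, solve it by the Banach fixed-point theorem using the invertibility of $\bar L_p$ together with the smallness hypotheses on $Q$ and $\fe$, and then read off the exact sequence by differentiating the resulting identity. The key observation is that the pair of conditions $(\id_Y - \Pi) F(p, x) = 0$ and $\pi x = x_0$ is equivalent to the existence of $o \in O$, necessarily equal to $\Pi F(p, x)$, satisfying
\begin{equation*}
  \bar L_p(-o,\, x) = \bigl(x_0,\, -Q(p, x) - \fe(p)\bigr).
\end{equation*}
Applying $\bar L_p^{-1}$ and projecting to the $X$-component turns this into the fixed-point equation $x = T_{p, x_0}(x)$ for
\begin{equation*}
  T_{p, x_0}(x) \coloneq \pr_X\, \bar L_p^{-1}\bigl(x_0,\, -Q(p, x) - \fe(p)\bigr).
\end{equation*}

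The uniform bound $\Abs{\bar L_p^{-1}} \leq c_R$ and the quadratic estimate \eqref{Eq_QQuadraticEstimate} show that on a ball $B_r(0) \subset X$ the map $T_{p, x_0}$ is $2 c_R c_Q r$-Lipschitz, while $T_{p, x_0}(0)$ has norm at most $c_R(\Abs{x_0}_I + \Abs{Q(p, 0)}_Y + c_\fe)$. Choosing $r$ of order $1/(c_R c_Q)$ and then shrinking $\sI$ and requiring $c_\fe \ll_{c_R, c_Q} 1$ makes $T_{p, x_0}$ a strict contraction of $B_r(0)$ into itself. The Banach fixed-point theorem produces a unique fixed point $x(p, x_0)$, and smooth dependence on $(p, x_0)$ follows from the parametric implicit function theorem applied to $(p, x_0, x) \mapsto x - T_{p, x_0}(x)$, whose $x$-linearization is invertible on $B_r(0)$ by the contraction estimate. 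Setting $\ob(p, x_0) \coloneq \Pi F(p, x(p, x_0))$ and taking $V$ to be the open set in $P \times U$ where this fixed-point problem applies, the assignment $(p, x_0) \mapsto (p, x(p, x_0))$ provides the required homeomorphism $\ob^{-1}(0) \to F^{-1}(0) \cap V$.

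To obtain the exact sequence, differentiate the identities $(\id - \Pi) F(p, x(p, x_0)) = 0$ and $\pi x(p, x_0) = x_0$ in $x_0$, yielding $\pi \cdot \del_{x_0} x = \id_I$, $(\id - \Pi) \del_x F \cdot \del_{x_0} x = 0$, and $\del_{x_0} \ob = \Pi \del_x F \cdot \del_{x_0} x$. Because $\Abs{\del_x Q(p, x)} \leq 2 c_Q \Abs{x}$ on $B_r(0)$, the linearization
\begin{equation*}
  \bar L_p^\fc \coloneq \begin{pmatrix} 0 & \pi \\ \iota & \del_x F \end{pmatrix}
\end{equation*}
at the fixed point is a small perturbation of $\bar L_p$ and hence remains invertible for $r$ small enough. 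Invertibility of $\bar L_p^\fc$ immediately forces the map $w \mapsto \pi w$ from $\ker \del_x F$ to $I$ to be injective and the map $o \mapsto [\iota o]$ from $O$ to $\coker \del_x F$ to be surjective, while exactness at $I$ and $O$ follows by elementary diagram chasing using the identities above: any $v \in I$ with $\del_{x_0} \ob \cdot v = 0$ lifts to $\del_{x_0} x \cdot v \in \ker \del_x F$, and any $o \in O$ with $\iota o = \del_x F w$ is realized as $\del_{x_0} \ob \cdot \pi w$ (invertibility of $\bar L_p^\fc$ forces $\del_{x_0} x \cdot \pi w = w$). The resulting isomorphism $\det \del_x F \iso \det I \otimes (\det O)^*$ is the canonical one attached to a four-term exact sequence of finite-dimensional vector spaces. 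The main technical point is to make all estimates uniform in $p \in P$ and to verify that $\bar L_p^\fc$ remains invertible for all $p$---both forced by the quadratic bound on $Q$ and the smallness of the fixed-point ball.
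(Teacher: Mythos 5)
Your proposal is correct, and the core of it is the same idea the paper uses: exploit the uniform invertibility of $\bar L_p$ to turn $(\id_Y-\Pi)F(p,x)=0$, $\pi x = x_0$ into a well-posed problem, solve it, and then compare $\del_x F$ with $\del_{x_0}\ob$. Where you diverge is in the implementation of both steps. For the solution step you set up the Banach fixed-point iteration $x = \pr_X\bar L_p^{-1}(x_0,\,-Q(p,x)-\fe(p))$ and appeal to the contraction mapping theorem directly, whereas the paper (following Guo) applies the Inverse Function Theorem to produce a parametric change of coordinates $\Xi(p,x_0,\cdot)$ on $\ker\pi$ that makes the $\ker\Pi$-component of $F$ affine-linear in the new variable; these are equivalent in substance since the IFT is itself proved by contraction. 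For the exact sequence, you run a direct diagram chase using the invertibility of the perturbed block operator $\bar L_p^\fc = \begin{pmatrix} 0 & \pi \\ \iota & \del_x F\end{pmatrix}$ (a small perturbation of $\bar L_p$ since $\Abs{\del_x Q}\lesssim \Abs{x}$), while the paper uses the block-triangular form of $\tilde F = F(\cdot,\Xi(\cdot))$ to read off $\ker\del_x F \iso \ker\del_{x_0}\ob$ and $\coker\del_x F \iso \coker\del_{x_0}\ob$ without any chasing. Your version is more hands-on and self-contained; the paper's is cleaner for tracking the determinant-line isomorphism, since the coordinate change makes the identifications manifest. One small point you and the paper both gloss over: the contraction/IFT argument tacitly uses that $Q(p,0)$ is small (in practice it vanishes), which is not literally forced by the quadratic estimate \eqref{Eq_QQuadraticEstimate} alone but holds in all the applications.
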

 
\begin{proof}[Proof sketch]
  This is result is essentially a summary of the discussion in \citet[Section 5]{Guo2013}; see also \cite[Proposition 4.2.4]{Donaldson1990}.
  The crucial point is that $\bar L_p$ induces an inverse to $(\id_Y-\Pi)L_p \co \ker\pi \to \ker\Pi$;
  thus by the Inverse Function Theorem there are $\sigma,\tau > 0$ such that $U' \coloneq B_\sigma(0)\times B_\tau(0) \subset I \times \ker \pi$, and there is a smooth map $\Xi\co P\times U' \to \ker \pi$ such that, for each $p \in P$ and $x \in B_\sigma(0)$:
  \begin{enumerate}
  \item
    $\Xi(p,x_0,0) = 0$,
  \item
    $\Xi(p,x_0,\cdot)$ is a diffeomorphism onto its image, and
  \item
    for all $p \in P$ and $(x_0,x_1) \in U'$, we have
    \begin{equation*}
      \tilde F(p,x_0,x_1)
      =
      F(p,x_0,\Xi(p,x_0,x_1))
      =
      \begin{pmatrix}
        f(p,x_0,x_1) \\
        G_p (x_0)
      \end{pmatrix}
      + \fe(p)
    \end{equation*}
    where $G_p\co \ker\pi \to \ker\Pi$ is the linear isomorphism induced by $\bar L_p$ and $f(p,0,0) = 0$.
  \end{enumerate}
  If $c_\fe \ll 1$, then $G_p^{-1}(\id_Y-\Pi)\fe(p) \in B_\tau(0)$ and we can take
  \begin{equation*}
    \sI = B_\sigma(0) \qandq
    x(p,x_0) \coloneq \paren[\big]{x_0, G_p^{-1}(\id_Y-\Pi)\fe(p)}.
  \end{equation*}

  We have
  \begin{equation*}
    \ker \del_xF \iso \ker \del_x \tilde F
    \qandq
    \coker \del_xF \iso \coker \del_x \tilde F.
  \end{equation*}
  However, $\del_x\tilde F$ induces $G_p(x_0)$ from $\ker \pi$ to $\ker \Pi$.
  Therefore,
  \begin{equation*}
    \ker \del_x \tilde F \iso \ker \del_{x_0}f \qandq
    \coker \del_x \tilde F \iso \coker \del_{x_0}f.
  \end{equation*}
  Since
  \begin{equation*}
    \ob(p,x_0) = f(p,x_0,G_p^{-1}(\id_Y-\Pi)\fe(p)),
  \end{equation*}
  it follows that
  \begin{equation*}
    \ker \del_xF \iso \ker \del_{x_0}\ob 
    \qandq
    \coker \del_xF \iso \coker \del_{x_0}\ob.
    \qedhere
  \end{equation*}
\end{proof}

\subsection{Orientations}
\label{Sec_SeibergWittenOrientations}

For the purpose of counting solutions to \eqref{Eq_SeibergWitten} orientations play an important role.
Suppose a trivialization $\tau\co \det L \iso \R$ has been chosen.
If $\bp \in \sP$ and $[\fc] \in \fM_\SW(\bp)$ is irreducible and unobstructed, then $\det L_\fc = \det(0) \otimes \det(0)^* = \R\otimes \R^*$ is canonically trivial, and we define $\tau([\fc]) = +1$ if the isomorphism $\tau_{[\fc]} \co \R \iso \R$ is orientation preserving and $\tau(\fc) = -1$ if it is orientation reversing.
If $\bp_0 \in \sP$ is such that all $[\fc] \in \fM_\SW(\bp_0)$ are irreducible and unobstructed, and $\fM_\SW(\bp_0)$ is finite, then we can define
\begin{equation*}
  n_\SW(\bp_0) \coloneq \sum_{[\fc] \in \fM_\SW(\bp_0)} \tau([\fc]).
\end{equation*}

The following is a useful criterion to check whether $\det L$ can be trivialized.

\begin{prop}
  \label{Prop_Orientations}
  Suppose that algebraic data as in \autoref{Def_AlgebraicData} and compatible geometric data as in \autoref{Def_GeometricData} have been fixed.
  Let $\rho_G \co G \to \Sp(S)$ be the restriction of the quaternionic representation $\rho \co H \to \Sp(S)$ to $G \nsub H$.
  Denote by $c_2 \in B\Sp(S)$ the universal second Chern class.
  If $(B\rho_G)^*c_2 \in H^4(BG,\Z)$ can be written as
  \begin{equation}
  \label{Eq_CharacteristicClasses}
    (B\rho_G)^*c_2 = 2x + \alpha_1 y_1^2 + \cdots + \alpha_k y_k^2
  \end{equation}
  with $x \in H^4(BG,\Z)$, $y_1,\ldots,y_k \in H^2(BG,\Z)$, and $\alpha_1, \ldots, \alpha_k \in \Z$, then
  \begin{equation*}
    \det L \to \sP\times \frac{\Gamma(\fS)\times \sA(Q)}{\sG(P)}
  \end{equation*}
  is trivial.
\end{prop}

\begin{proof}
  The parameter space $\sP$ is contractible;
  hence, it is enough to fix an element $\bp \in \sP$ and prove that $\det L$ is trivial over the second factor. 
  We need to show that if $(\fc_t)_{t\in[0,1]}$ is a path in $\Gamma(\fS)\times \sA_B(Q)$ and $u \in \sG(P)$ is such that $u\fc_1 = \fc_0$, then the spectral flow of $(L_{\fc_t})_{t\in[0,1]}$ is even.
  The mapping torus of $u \co Q \to Q$ is a principal $H$--bundle $\bQ$ over $S^1 \times M$, and the path $(\fc_t)_{t\in[0,1]}$ induces a connection $\bA$ on $\bQ$.
  Over $S^1 \times M$ we also have an adjoint bundle $\fg_\bP$ and the spinor bundles $\bfS^+$ and $\bfS^-$ associated with $\bQ$ via the quaternionic representation $\rho \co H \to \Sp(S)$.
  According to Atiyah--Singer--Patodi, the spectral flow of $(L_{\fc_t})_{t\in[0,1]}$ is the index of the operator $\bL = \partial_t - L_{\fc_t}$ which can be identified with an operator 
  \begin{equation*}
    \bL \co \Gamma(\bfS^+)\oplus\Omega^1(S^1\times M,\fg_{\bP})\to \Gamma(\bfS^-)\oplus\Omega^+(S^1\times M,\fg_{\bP})\oplus\Omega^0(S^1\times M,\fg_{\bP}).
  \end{equation*}
  In our case, $\bL$ is homotopic through Fredholm operators to the sum of the Dirac operator $\slD^+_{\bA} \co \Gamma(\bfS^+) \to \Gamma(\bfS^-) $ and the Atiyah--Hitchin--Singer operator $\rd^+_\bA \oplus \rd^*_\bA$ for $\fg_\bP$.
  The index of the Atiyah--Hitchin--Singer operator is $-2p_1(\fg_\bP)$ and thus even.
  To compute the index of the Dirac operator, observe that the vector bundle $\bV \coloneq \bQ \times_{\rho} S$ inherits from $S$ the structure of a left-module over $\H$ and that
  \begin{equation*}
    \bfS^{\pm} = \slS^{\pm} \otimes_{\H} \bV,
  \end{equation*}
  where $\slS^{\pm}$ are the usual spinor bundles of $S^1 \times M$ with the spin structure induced from that on $M$ and we use the structure of $\slS^{\pm}$ as a right-modules over $\H$.
  $\bfS^{\pm}$ is a real vector bundle: it is a real form of $\slS^{\pm} \otimes_{\C} \bV$.
  Therefore, the complexification of $\slD_{\bA}^+$ is the standard complex Dirac operator on $\slS^{\pm}$ twisted by $\bV$.
  By the Atiyah--Singer Index Theorem, 
  \begin{align*}
    \ind \slD_{\bA}^+ 
    &= 
      \int_{S^1\times M} \hat A(S^1 \times M)\ch(\bV) \\
    &=
      \int_{S^1 \times M} \ch_2( \bV)
     =
      - \int_{S^1 \times M} c_2(\bV).
  \end{align*}
  The classifying map $f_\bV \co S^1\times M \to B\Sp(S)$ of $\bV$ is related to the classifying map $f_\bQ \co S^1\times M \to BG$ of $\bQ$ through
  \begin{equation*}
    f_\bV = B\rho_G \circ f_\bQ,
  \end{equation*}
  and
  \begin{equation*}
    c_2(\bV)
    = f_\bV^*c_2
    = f_\bQ^*(B\rho_G)^*c_2.
  \end{equation*}
  Since the intersection form of $S^1\times M$ is even, the hypothesis implies that the right-hand side of the above index formula is even.
\end{proof}

\begin{remark}
  If $G$ is simply--connected,
  then the condition \eqref{Eq_CharacteristicClasses} is satisfied if and only if the image of
  \begin{equation*}
    (\rho_G)_* \co \pi_3( G )\to \pi_3( \Sp(S) ) = \Z
  \end{equation*}
  is generated by an even integer. 
  To see this, observe that $BG$ is $3$--connected;
  hence, by the Hurewicz theorem, $H_4(BG,\Z) = \pi_4(BG) \cong \pi_3(G)$ and $H_i( BG, \Z) = 0$ for $i = 1,2,3$.
  The same is true for $\Sp(S)$, and we have a commutative diagram
  \begin{equation*}
    \begin{tikzcd}
      H_4(BG,\Z) \ar[r,"(B\rho_G)_*"] \ar[d,"\iso"] & H_4(B\Sp(S),\Z) \ar[d,"\iso"] \\
      \pi_3(G) \ar[r,"(\rho_G)_*"] & \pi_3(\Sp(S)).
    \end{tikzcd}
  \end{equation*}
  The group $H_4(BG,\Z)$ is freely generated by some elements $x_1, \ldots, x_k$.
  Let $x^1, \ldots, x^k$ be the dual basis of $H^4(BG,\Z) = \Hom(H_4(BG,\Z), \Z)$.
  Likewise, $H_4(B\Sp(S),\Z)$ is freely generated by the unique element $z$ satisfying $\inner{c_2}{z} = 1$. 
  We have
  \begin{equation}
   \label{Eq_Pullback}
   (B \rho_G)^* c_2 = \sum_{i=1}^k \inner{ (B \rho_G)^* c_2 }{x_i} x^i 
  \end{equation}
  and 
  \begin{equation*}
    \inner{ (B \rho_G)^* c_2 }{x_i} = \inner{ c_2}{ (B \rho_G)_* x_i}.
  \end{equation*}
  Therefore, the coefficients in the sum \eqref{Eq_Pullback} are all even if and only if the image of $(B \rho_G)_*$ is generated by $2m z$ for some $m \in \Z$.
\end{remark}

\begin{example}
  The hypothesis of \autoref{Prop_Orientations} holds when $S = \H \otimes_{\C} W$ for some complex Hermitian vector space $W$ of dimension $n$ and $\rho_G$ is induced from a unitary representation $G \to \U(W)$;
  as is the case for the representations leading to the classical Seiberg--Witten and $\U(n)$--monopole equations, see \autoref{Ex_ClassicalSeibergWitten} and \autoref{Ex_NonabelianMonopoles}.
  To see that $(B\rho_G)^*c_2$ is of the desired form, note that if $E$ is a rank $n$ Hermitian vector bundle, then the corresponding quaternionic Hermitian bundle obtained via the inclusion $\U(n) \to \Sp(n)$ is $\H \otimes_\C E = E \oplus \bar E$ and
  \begin{equation*}
    c_2(\H\otimes_\C E) = c_2(E\oplus\bar E) = 2c_2(E) - c_1(E)^2.
  \end{equation*}
\end{example}

\begin{example}
  The hypothesis of \autoref{Prop_Orientations} is also satisfied when $S = \H \otimes_{\R} W$ for a real Euclidean vector space $W$,
  and $\rho_G$ is induced from an orthogonal representation $G \to \SO(W)$;
  as is the case for the equation for flat $G^\C$--connections,
  see \autoref{Ex_GCFlatness}.
  To see that $(B\rho_G)^*c_2$ is of the desired form, note that if $E$ is a Euclidean vector bundle of rank $n$,
  then the associated quaternionic Hermitian vector bundle is $\H\otimes_\R E$ and
  \begin{equation*}
    c_2(\H \otimes_{\R} E) = -2p_1(E).
  \end{equation*}
\end{example}

If two quaternionic representations satisfy the hypothesis of \autoref{Prop_Orientations}, then so does their direct sum.
Therefore, the previous two examples together show that $\det L$ is trivial for the ADHM Seiberg--Witten equation described in \autoref{Ex_ADHMSeibergWitten}.

\begin{example}
  In general, $\det L$ need not be orientable.
  If $S = \H$ and $G = H = \Sp(1)$ acts on $S$ by right multiplication, then it is easy to see that the gauge transformation of the trivial bundle $Q = S^3 \times \SU(2)$ induced by $S^3 \iso \SU(2)$ gives rise to an odd spectral flow.
\end{example}


\section{The Haydys correspondence}
\label{Sec_FueterSections}

In order to discuss the deformation theory \emph{on} the boundary of $\overline\fM_\SW$, it will be helpful to review the correspondence, discovered by \citet[Section 4.1]{Haydys2011}, between Fueter sections of $\fX$ and solutions $(\Phi,A) \in \Gamma(\fS^\reg)\times\sA_B(Q)$ of
\begin{equation}
  \label{Eq_LimitingSeibergWitten}
  \slD_A \Phi = 0 \qandq \mu(\Phi) = 0.
\end{equation}

For what follows it will be important to recall some details of hyperkähler reduction construction.

\begin{prop}[{\citet[Section 3(D)]{Hitchin1987}}]
  \label{Prop_HyperkahlerQuotient}
  If $\rho\co G \to \Sp(S)$ is a quaternionic representation, then the following hold:
  \begin{enumerate}
  \item 
    \label{Prop_HyperkahlerQuotient_X}
    The space
    \begin{equation*}
      X
      \coloneq
        S^\reg \hkred G
      \coloneq
        \(\mu^{-1}(0) \cap S^\reg\)/ G
    \end{equation*}
    is an orbifold (with discrete isotropy groups).
  \item
    \label{Prop_HyperkahlerQuotient_HorizontalNormal}
    Denote by $p \co \mu^{-1}(0) \cap S^\reg \to X$ the canonical projection.
    Set
    \begin{align*}
      \fH
      &\coloneq
        (\ker \rd p)^\perp \cap T (\mu^{-1}(0)\cap S^\reg) 
        \qand
      \\
      \fN
      &\coloneq
        \fH^\perp \subset TS|_{\mu^{-1}(0)\cap S^\reg}.
    \end{align*}
    For each $\Phi \in \mu^{-1}(0) \cap S^\reg$, $(\rd p)_\Phi \co \fH_\Phi \to T_{[\Phi]} X$ is an isomorphism, and
    \begin{equation}
      \label{Eq_NPhi}
      \fN_\Phi
      =
        \im \(\rho(\cdot)\Phi \oplus \bar\gamma(\cdot)\Phi\co \fg \otimes \H \to S\).
    \end{equation}
  \item
    \label{Prop_HyperkahlerQuotient_HorizontalNormalQuaternionic}
    For each $\Phi \in \mu^{-1}(0) \cap S^\reg$, $\gamma$ preserves the splitting $S = \fH_\Phi \oplus \fN_\Phi$.
  \item
    \label{Prop_HyperkahlerQuotient_G+GammaX}
    There exist a Riemannian metric $g_X$ on $X$ and a Clifford multiplication 
    \begin{equation*}
      \gamma_X\co \Im \H \to \End(TX)
    \end{equation*}
    such that
    \begin{equation*}
      p^*g_X = \inner{\cdot}{\cdot} \qandq
      p^*\gamma_X = \gamma.
    \end{equation*}
  \item
    \label{Prop_HyperkahlerQuotient_GammaParallel}
    $\gamma_X$ is parallel with respect to $g_X$; hence, $X$ is a hyperkähler orbifold---which is called the \defined{hyperkähler quotient} of $S$ by $G$.
  \end{enumerate}
\end{prop}

\begin{remark}
  More generally, $\mu^{-1}(0)/G$ can be decomposed into a union of hyperkähler manifolds according to the conjugacy class of the stabilizers in $G$;
  see \citet[Theorem 2.1]{Dancer1997}.
\end{remark}

\subsection{Lifting sections of \texorpdfstring{$\fX$}{X}}

\begin{prop}
  \label{Prop_HaydysCorrespondence}
  Given a set of geometric data as in \autoref{Def_GeometricData}, set
  \begin{equation*}
    X \coloneq S^\reg\hkred G
    \qandq
    \fX \coloneq (\fs\times R) \times_{\Sp(1)\times K} X.
  \end{equation*}
  Denote by $p \co S^\reg\cap\mu^{-1}(0) \to X$ the canonical projection.
  \begin{enumerate}
  \item
    \label{Prop_HaydysCorrespondence_ExistenceOfLift}
    If $s \in \Gamma(\fX)$, then there exist a principal $H$--bundle $Q$ together with an isomorphism $Q\times_H K \iso R$ and a section $\Phi \in \Gamma(\fS^\reg)$ of
    \begin{equation*}
      \fS^\reg \coloneq (\fs\times Q) \times_{\Sp(1)\times H} S^\reg
    \end{equation*}
    satisfying
    \begin{equation*}
      \mu(\Phi) = 0 \qandq
      s = p \circ \Phi.
    \end{equation*}
    $Q$ and $Q\times_H K \iso R$ are unique up to isomorphism, and every two lifts $\Phi$ are related by a unique gauge transformation in $\sG(P)$.
  \item
    \label{Prop_HaydysCorrespondence_HorizontalConnection}
    Suppose $B \in \sA(R)$.
    If $\Phi \in \Gamma(\fS^\reg)$ satisfies $\mu(\Phi) = 0$, then there is a unique $A \in \sA_B(Q)$ such that $\nabla_A\Phi \in \Omega^1(M,\fH_\Phi)$.
    In particular, for this connection
    \begin{equation*}
      p_*(\slD_A\Phi) = \fF(s).
    \end{equation*}
  \item
    \label{Prop_HaydysCorrespondence_HorizontalConnection2}
    The condition $p_*(\slD_A\Phi) = \fF(s)$ characterizes $A \in \sA_B(Q)$ uniquely.
  \end{enumerate}
\end{prop}

\begin{proof}
  Part \eqref{Prop_HaydysCorrespondence_ExistenceOfLift} is proved by observing that the lifts exists locally and that the obstruction to the local lifts patching defines a cocycle which determines $Q$; see \cite{Haydys2011} for details.

  We prove \eqref{Prop_HaydysCorrespondence_HorizontalConnection}.
  For an arbitrary connection $A_0 \in \sA_B(Q)$ and for all $x \in M$, we have
  \begin{equation*}
    (\nabla_{A_0}\Phi)(x) \in T^*_xM \otimes T_{\Phi(x)}(S^\reg\cap\mu^{-1}(0)).
  \end{equation*}
  By \autoref{Prop_HyperkahlerQuotient}\eqref{Prop_HyperkahlerQuotient_HorizontalNormal} there exists a unique $a \in \Omega^1(M,\fg_P)$ such that
  \begin{equation*}
    \nabla_{A_0+a} \Phi \in \Omega^1(M,\fH_\Phi).
  \end{equation*}
  The assertion in \eqref{Prop_HaydysCorrespondence_HorizontalConnection} now follows from the fact that for $s = p\circ\Phi$ we have $p_*(\nabla_{A_0}\Phi) = \nabla_B s$ and the definitions of $\slD_A$ and $\fF$.

  We prove \eqref{Prop_HaydysCorrespondence_HorizontalConnection2}.
  If $a \in \Omega^1(M,\fg_P)$ and $A+a$ also satisfies this condition, then we must have
  \begin{equation*}
    \bar\gamma(a)\Phi = 0.
  \end{equation*}
  This is impossible because $\Phi \in \Gamma(\fS^{\reg})$, that is, $(\rd\mu)_\Phi$ is surjective; hence, its adjoint $\bar\gamma(\cdot)\Phi$ is injective.
\end{proof}

\begin{prop}
  \label{Prop_GaugeFixingLifts}
  Given a set of geometric data as in \autoref{Def_GeometricData},
  set
  \begin{equation*}
    R \coloneq Q\times_HK, \quad
    \fX \coloneq (\fs\times R)\times_{\Sp(1)\times K}X, \qandq
    \fS^\reg \coloneq (\fs\times Q)\times_{\Sp(1)\times H}S^\reg.
  \end{equation*}
  The map
  \begin{equation*}
    \Gamma(\mu^{-1}(0)\cap\fS^\reg)/\sG(P) \to \Gamma(\fX)
  \end{equation*}
  \begin{equation*}
    [\Phi] \mapsto p \circ \Phi
  \end{equation*}
  is a homeomorphism onto its image.
\end{prop}

\begin{proof}
  Fix $\Phi_0 \in \Gamma(\mu^{-1}(0)\cap\fS^\reg)$ and set $s_0 \coloneq p\circ\Phi_0 \in \Gamma(\fX)$.
  Given $0 < \sigma \ll 1$, for every $\Phi \in \Gamma(\mu^{-1}(0)\cap\fS^\reg)$ with $\Abs{\Phi-\Phi_0}_{L^\infty} < \sigma$, there is a unique $u \in \sG(P)$ such that
  \begin{equation*}
    u\Phi \perp \im\paren[\big]{\rho(\cdot)\Phi_0 \co \Gamma(\fg_P) \to \Gamma(\fS)};
  \end{equation*}
  moreover, for every $k \in \N$,
  \begin{equation*}
    \Abs{u\Phi - \Phi_0}_{C^k} \lesssim_k \Abs{\Phi - \Phi_0}_{C^k}.
  \end{equation*}
  Thus, it suffices to show that the map
  \begin{equation*}
    \set*{
      \Phi \in \Gamma(\mu^{-1}(0)\cap \fS^\reg)
      :
      \begin{array}{@{}l@{}}
        \Abs{\Phi-\Phi_0}_{L^\infty} < \sigma \text{ and} \\
        \Phi \perp \im\paren[\big]{\rho(\cdot)\Phi_0 \co \Gamma(\fg_P) \to \Gamma(\fS)}
      \end{array}
    }
    \to \Gamma(\fX)
  \end{equation*}
  is a homeomorphism onto its image.
  This, however, is immediate from the Implicit Function Theorem and the fact that the tangent space at $\Phi_0$ to the former space is $\Gamma(\fH_{\Phi_0})$ and the derivative of this map is the canonical isomorphism $\Gamma(\fH_{\Phi_0}) \iso \Gamma(s_0^*V\fX)$ from \autoref{Prop_HyperkahlerQuotient}\eqref{Prop_HyperkahlerQuotient_HorizontalNormal}.
\end{proof}

In the situation of \autoref{Prop_HaydysCorrespondence},
we have $\abs{\Phi} = \abs{\hat v \circ s}$.
The preceding results thus imply the following.

\begin{cor}
  \label{Cor_HaydysCorrespondence}
  Let $R$ be a principal $K$--bundle.
  Set $\fX \coloneq R\times_K X$ and
  \begin{equation*}
    \fM_F
    \coloneq
      \set*{ (\bp,s) \in \sP \times \Gamma(\fX) : \fF(s) = 0 \text{ and } \Abs{\hat v\circ s}_{L^2} = 1 }.
  \end{equation*}
  The map 
  \begin{equation*}
    \coprod_{Q} \del\fM_{\SW,Q} \to \fM_F
  \end{equation*}
  defined by
  \begin{equation*}
    (\bp,[(\Phi,A)]) \mapsto (\bp,p\circ\Phi)
  \end{equation*}
  is a homeomorphism.
  Here, the disjoint union is taken over all isomorphism classes of principal $H$--bundles $Q$ with isomorphisms $Q\times_HK \iso R$.
\end{cor}

\subsection{Lifting infinitesimal deformations}

\begin{prop}
  \label{Prop_InfinitesimalHaydysCorrespondence}
  For $\Phi \in \Gamma(\mu^{-1}(0)\cap\fS^\reg)$, set $s \coloneq p\circ\Phi$ and let $A \in \sA_B(Q)$ be as in \autoref{Prop_HaydysCorrespondence}.
  The isomorphism $p_* \co \Gamma(\fH_\Phi) \to \Gamma(s^*V\fX)$ identifies $\pi_\fH\nabla_A \co \Omega^0(M,\fH_\Phi) \to \Omega^1(M,\fH_\Phi)$ with $\nabla_B \co \Omega^0(M,s^*V\fX) \to \Omega^1(M,s^*V\fX)$.
\end{prop}

\begin{proof}
  If $(\Phi_t)$ is a one-parameter family of local sections of $\mu^{-1}(0)\cap\fS^\reg$ with
  \begin{equation*}
    (\del_t \Phi_t)|_{t=0} = \phi,
  \end{equation*}
  $A_t$ are as in \autoref{Prop_HaydysCorrespondence}, and $a = (\del_t A_t)|_{t=0}$, then we have
  \begin{equation*}
    \left.\del_t\(\pi_{\fH_{\Phi_t}}\nabla_{A_t} \Phi_t\)\right|_{t=0}
    = 
    \left.(\del_t \pi_{\fH_{\Phi_t}})\right|_{t=0} \nabla_{A_0} \Phi_0
    + \pi_{\fH_{\Phi_0}} (\rho(a)\Phi_0)
    + \pi_{\fH_{\Phi_0}} (\nabla_{A_0}\phi).
  \end{equation*}
  The first term vanishes because $\nabla_{A_0} \Phi_0 \in \Gamma(\fH_{\Phi_0})$, and the second term vanishes because of \autoref{Prop_HyperkahlerQuotient}\eqref{Prop_HyperkahlerQuotient_HorizontalNormal}.
\end{proof}

If $\Phi \in \Gamma(\mu^{-1}(0)\cap\fS^\reg)$,
then the induced splitting $\fS = \fH_\Phi\oplus \fN_\Phi$ given by \autoref{Prop_HyperkahlerQuotient}\eqref{Prop_HyperkahlerQuotient_HorizontalNormal} need not be parallel for $A$ as in \autoref{Prop_HaydysCorrespondence}.

\begin{definition}
  The \defined{second fundamental forms} of the splitting $\fH_\Phi\oplus \fN_\Phi$ are defined by
  \begin{align*}
    \rII
    &\coloneq
      \pi_\fN \nabla_A \in \Omega^1(M,\Hom(\fH_\Phi,\fN_\Phi))
    \qand \\
    \rII^*
    &\coloneq
      -\pi_\fH \nabla_A \in \Omega^1(M,\Hom(\fN_\Phi,\fH_\Phi)).
  \end{align*}
\end{definition}

We decompose the Dirac operator $\slD_A$ according to $\fS = \fH_\Phi\oplus\fN_\Phi$ as
\begin{equation}
  \label{Eq_DiracInBlockForm}
  \slD_{A} = 
  \begin{pmatrix}
    \slD_\fH & -\gamma \rII^* \\
    \gamma \rII & \slD_\fN
  \end{pmatrix}
\end{equation}
with
\begin{align*}
  \slD_\fH
  &\coloneq
    \gamma(\pi_\fH \nabla_A) \co \Gamma(\fH_\Phi) \to \Gamma(\fH_\Phi)
  \qand \\
  \slD_\fN
  &\coloneq
    \gamma(\pi_\fN \nabla_A) \co \Gamma(\fN_\Phi) \to \Gamma(\fN_\Phi).
\end{align*}

The following result helps to better understand the off-diagonal terms in \eqref{Eq_DiracInBlockForm}.

\begin{prop}
  \label{Prop_GammaII}
  Suppose $\Phi \in \Gamma(\mu^{-1}(0)\cap\fS^\reg)$ and $\slD_A\Phi = 0$.
  Writing $\phi \in \Gamma(\fN_\Phi)$ as
  \begin{equation*}
    \phi = \rho(\xi)\Phi + \bar\gamma(a)\Phi
  \end{equation*}
  for $\xi \in \Gamma(\fg_P)$ and $a \in \Omega^1(M,\fg_P)$, we have
  \begin{equation*}
    -\gamma\rII^*\phi
    =
      2\sum_{i=1}^3 \pi_{\fH}\(\rho(a(e_i))\nabla^A_{e_i}\Phi\).
  \end{equation*}
  Here $(e_1,e_2,e_3)$ is a local orthonormal frame.
\end{prop}

\begin{proof}
  Since $\nabla\Phi \in \Omega^1(M,\fH_\Phi)$ and $\slD_A\Phi = 0$,
  we have
  \begin{align*}
    -\gamma\rII^* (\rho(\xi)\Phi + \bar\gamma(a)\Phi_0)
    &=
      \sum_{i=1}^3
      \gamma(e^i)\pi_\fH\(\rho(\xi)\nabla^A_{e_i}\Phi + \bar\gamma(a)\nabla^A_{e_i}\Phi\) \\
    &=
      \sum_{i=1}^3
      \pi_\fH\((\gamma(e^i)\bar\gamma(a)+\bar\gamma(a)\gamma(e^i))\nabla^A_{e_i}\Phi\) \\
    &=
      2\sum_{i=1}^3
      \pi_{\fH}(\rho(a(e_i))\nabla^A_{e_i}\Phi).
      \qedhere
  \end{align*}
\end{proof}

\begin{prop}
  \label{Prop_slDfH=dfF}
  The isomorphism $p_* \co \Gamma(\fH_\Phi) \to \Gamma(s^* V \fX)$ identifies the linearized Fueter operator $(\rd\fF)_s \co \Gamma(s^*V\fX) \to \Gamma(s^*V\fX)$ with $\slD_\fH \co \Gamma(\fH_\Phi) \to \Gamma(\fH_\Phi)$. 
\end{prop}

\begin{proof}
  The linearized Fueter operator is given by
  \begin{equation*}
    (\rd\fF)_s\hat s = \gamma(\nabla_B \hat s)
  \end{equation*}
  The assertion thus follows from \autoref{Prop_HyperkahlerQuotient}\eqref{Prop_HyperkahlerQuotient_G+GammaX} and \autoref{Prop_InfinitesimalHaydysCorrespondence}.
\end{proof}


\section{Deformation theory of Fueter sections}
\label{Sec_FueterDeformationTheory}

\begin{prop}
  \label{Prop_FueterKuranishiModel}
  Let $s_0 \in \Gamma(\fX)$ be a Fueter section with respect to $\bp_0 = (g_0,B_0) \in \sP$.
  Denote by $\fc_0 \in \Gamma(\fS^\reg)\times\sA(P)$ a lift of $s_0$.
  There exist an open neighbourhood $U$ of $\bp_0 \in \sP$, an open neighborhood
  \begin{equation*}
    \sI_\del \subset I_\del \coloneq \ker (\rd \fF)_{s_0} \cap (\hat v\circ s)^\perp
  \end{equation*}
  of $0$, a smooth map
  \begin{equation*}
    \ob_\del \co U \times \sI_\del \to \coker(\rd\fF)_{s_0},
  \end{equation*}
  an open neighborhood $V$ of $([\bp_0,\fc_0]) \in \del \fM_\SW$, and a homeomorphism
  \begin{equation*}
    \fx_\del \co \ob_\del^{-1}(0) \to V \subset \del\fM_\SW
  \end{equation*}
  which maps $(\bp_0,0)$ to $(\bp_0,0,[\fc_0])$ and commutes with the projections to $\sP$.
\end{prop}

Since $\del\fM_\SW \iso \fM_F$ through the Haydys correspondence, this has a straightforward proof using \autoref{Lem_ZeroSetsOfFredholmMaps}, which makes no reference to the Seiberg--Witten equation.
However, this is not the approach we take because our principal goal is to compare the deformation theory of Fueter sections with that of solutions of the Seiberg--Witten equation.

Fix $k \in \N$ and $p \in (1,\infty)$ with $(k+1)p > 3$.
Let
\begin{equation*}
  \del\fM_\SW^{k,p} =
  \set*{
    (\bp,[(\Phi,A)]) \in \sP\times \frac{W^{k+1,p}\Gamma(\fS)\times W^{k,p}\sA(Q)}{W^{k+1,p}\sG(P)}
    :
    \begin{array}{@{}l@{}}
      A \text{ induces } B, \\ 
      (\Phi,A) \text{ satisfies } \eqref{Eq_LimitingSeibergWitten}, \\
      \text{and }  \| \Phi \|_{L^2} = 1
    \end{array}
  }.
\end{equation*}
By the Haydys correspondence $\del\fM_\SW^{k,p}$ is homeomorphic to $\fM_F^{k,p}$, the universal moduli space of normalized $W^{k+1,p}$ Fueter sections of $\fX$.
Consequently, for $\ell \in \N$ and $q \in (1,\infty)$ with $\ell \geq k$ and $q \geq p$, the inclusions $\del\fM_\SW^{\ell,q} \subset \del\fM_\SW^{k,p} \subset \del\fM_\SW$ are homeomorphisms;
see also \autoref{Prop_RegularityAtEpsilon=0}.

\begin{prop}
  \label{Prop_L0}
  Assume the situation of \autoref{Prop_FueterKuranishiModel}.
  For $\bp \in \sP$, set
  \begin{align*}
    X_0 &\coloneq  W^{k+1,p}\Gamma(\fS) \oplus W^{k,p}\Omega^1(M,\fg_P)\oplus W^{k,p}\Omega^0(M,\fg_P) \\
    \andq
    Y &\coloneq  W^{k,p}\Gamma(\fS) \oplus W^{k+1,p}\Omega^1(M,\fg_P)\oplus W^{k+1,p}\Omega^0(M,\fg_P)\oplus\R,
  \end{align*}
  and define a linear map $L_{\bp,0} \co X_0 \to Y$, a quadratic map $Q_{\bp,0} \co X_0 \to Y$, and $\fe_{\bp,0} \in Y$ by
  \begin{gather*}
    L_{\bp,0}
    \coloneq
    \begin{pmatrix}
      -\slD_{A_0} & -\bar\gamma(\cdot)\Phi_0 & -\rho(\cdot)\Phi_0 \\
      -2*\mu(\Phi_0,\cdot) & 0 & 0 \\
      -\rho^*(\cdot\,\Phi_0^*) & 0 & 0\\
      2\inner{\cdot}{\Phi_0}_{L^2} & 0 & 0
    \end{pmatrix}, \\
    Q_{\bp,0}(\phi,a,\xi)
    \coloneq
    \begin{pmatrix}
      -\bar\gamma(a)\phi \\
      -*\mu(\phi) \\
      0 \\
      \Abs{\phi}_{L^2}^2
    \end{pmatrix},
    \qandq
    \fe_{\bp,0} \coloneq
    \begin{pmatrix}
      -\slD_{A_0} \Phi_0 \\
      -\mu(\Phi_0) \\
      0 \\
      \Abs{\Phi_0}_{L^2}^2 - 1
    \end{pmatrix},
  \end{gather*}
  respectively.%
  \footnote{%
    The term $\fe_{\bp,0}$ vanishes for $\bp = \bp_0$.
  }
  
  There exist a neighborhood $U$ of $\bp_0 \in \sP$ and $\sigma > 0$, such that, for every $\bp \in U$ and $\hat\fc = (\phi,a,\xi) \in B_\sigma(0) \subset X_0$, we have
  \begin{equation}
    \label{Eq_L0}
    L_{\bp,0}\hat\fc + Q_{\bp,0}(\hat\fc) + \fe_{\bp,0} = 0
  \end{equation}
  if and only if $\xi = 0$ and $(\Phi,A) = (\Phi_0 + \phi,A_0 + a)$ satisfies 
  \begin{equation}
    \label{Eq_SeibergWittenLimit}
    \slD_A \Phi = 0 \qandq
    \mu(\Phi) = 0
  \end{equation}  
  as well as
  \begin{equation*}
    \| \Phi \|_{L^2} = 1 \qandq 
    \rho^*(\Phi\Phi_0^*) = 0.
  \end{equation*}
\end{prop}

\begin{remark}
  The above proposition engages in the following abuse of notation.
  If $A_0 \in \sA_B(Q)$ and $B' \in \sA(R)$, then $b = B'-B \in \Omega^1(M,\fg_R)$.
  Since $\Lie(K) = \Lie(G)^\perp \subset \Lie(H)$ we have a map $\Omega^1(M,\fg_R) \to \Omega^1(M,\fg_Q)$ and can identify $A_0 \in \sA_B(Q)$ with ``$A_0$'' $= A_0 + b \in \sA_{B'}(Q)$.
\end{remark}

Together with (the argument from the proof of) \autoref{Prop_GaugeFixingLifts} we obtain the following.

\begin{cor}
  Assume the situation of \autoref{Prop_FueterKuranishiModel}.
  With $U \subset \sP$ and $\sigma > 0$ as in \autoref{Prop_L0}, the map
  \begin{equation*}
    \set*{
      (\bp,\hat\fc) \in U\times B_\sigma(0)
      \text{ satisfying }
      \eqref{Eq_L0}
    }
    \to \del\fM_\SW
  \end{equation*}
  defined by
  \begin{equation*}
     (\bp,\phi,a,\xi) \mapsto (\bp,[(\Phi_0+\phi,A_0+a)])
  \end{equation*}
  is a homeomorphism onto a neighborhood of $[\fc_0]$.
\end{cor}

\begin{proof}[Proof of \autoref{Prop_L0}]
  If $\hat\fc = (\phi,a,\xi)$ satisfies \eqref{Eq_L0}, then $\Phi=\Phi_0+\phi$ and $A = A_0 + a$ satisfy
  \begin{equation*}
    \slD_{A}\Phi + \rho(\xi)\Phi_0 = 0, \quad \mu(\Phi) = 0, \qandq
    \rho^*(\phi\Phi_0^*) = 0.
  \end{equation*}
  Hence, by \autoref{Prop_DMu_D*Mu},
  \begin{equation*}
    0
    =
    \rd_A\mu(\Phi)
    = -\rho(\slD_A\Phi\Phi^*)
    = \rho^*(\rho(\xi)\Phi_0(\Phi_0+\phi))
    = R_{\Phi_0}^*R_{\Phi_0}\xi + O(\abs{\xi}\abs{\phi})
  \end{equation*}
  with
  \begin{equation*}
    R_{\Phi_0} \coloneq \rho(\cdot)\Phi_0.
  \end{equation*}
  Since $\Phi_0$ is regular, $R_{\Phi_0}$ is injective, and it follows that $\xi = 0$ if $\abs{\phi} \lesssim \sigma \ll 1$ and $\bp$ is sufficiently close to $\bp_0$.
\end{proof}

\begin{proof}[Proof of \autoref{Prop_FueterKuranishiModel}]
  Denote by $\iota \co \coker (\rd\fF)_{s_0} \iso \coker \slD_\fH \to \Gamma(\fH)$ the inclusion of the $L^2$ orthogonal complement of $\im \slD_\fH$.
  Denote by $\pi_0 \co \Gamma(\fH) \to I_\del$ the $L^2$ orthogonal projection onto $I_\del = \ker (\rd\fF)_{s_0} \cap (\hat v\circ s)^\perp \subset \ker\slD_\fH \iso \ker(\rd\fF)_{s_0}$.
  Define
  \begin{equation*}
    \bar\slD_\fH \co \coker (\rd\fF)_{s_0} \oplus \Gamma(\fH) \to I_\del \oplus \R \oplus \Gamma(\fH)
  \end{equation*}
  by
  \begin{equation*}
    \bar\slD_\fH
    \coloneq
    \begin{pmatrix}
      0 & \pi_0 \\
      0 & -2\inner{\cdot}{\Phi_0}_{L^2} \\
      \iota & \slD_\fH
    \end{pmatrix}.
  \end{equation*}
  Set
  \begin{equation}
    \label{Eq_BarX0}
    \begin{split}
    \bar X_0
      &\coloneq \coker (\rd\fF)_{s_0} \oplus W^{k+1,p}\Gamma(\fH) \\
      &\quad\oplus W^{k+1,p}\Gamma(\fN) \\ 
      &\quad \oplus W^{k,p}\Omega^1(M,\fg_P) \oplus W^{k,p}\Omega^0(M,\fg_P) \qand \\
      \bar Y 
      &\coloneq I_\del \oplus \R \oplus W^{k,p}\Gamma(\fH) \\
      &\quad\oplus W^{k,p}\Gamma(\fN) \\
      &\quad\oplus W^{k+1,p}\Omega^1(M,\fg_P)\oplus W^{k+1,p}\Omega^0(M,\fg_P).
    \end{split}
  \end{equation}
  Define the operator $\bar L_{\bp,0} \co \bar X_0 \to \bar Y$ by
  \begin{equation}
    \label{Eq_LinearizationAtZeroBar}
    \bar L_{\bp,0}
    \coloneq
    \begin{pmatrix}
      -\bar\slD_\fH & \gamma\rII^* & 0 \\
      -\gamma\rII & -\slD_\fN & -\fa & 0 \\
      0 & -\fa^* & 0 
    \end{pmatrix}
  \end{equation}
  with $\fa \co \Omega^1(M,\fg_P) \oplus \Omega^0(M,\fg_P) \to \Gamma(\fN)$ defined by
  \begin{equation*}
    \fa(a,\xi) \coloneq \bar\gamma(a)\Phi_0 + \rho(\xi)\Phi.
  \end{equation*}

  The operator $\bar\slD_\fH$ is invertible because
  \begin{equation*}
    \begin{pmatrix}
      \pi_0 \\
      -2\inner{\cdot}{\Phi_0}_{L^2}
    \end{pmatrix}
  \end{equation*}
  is essentially the $L^2$ orthogonal projection onto $\ker \slD_\fH$.
  It can be verified by a direct computation that $\bar L_{\bp_0,0}$ is invertible and its inverse is given by
  \begin{equation}
    \label{Eq_L0Inverse}
    \begin{pmatrix}
      -\bar\slD_\fH^{-1} & 0 & -\bar\slD_\fH^{-1}\gamma\rII^*(\fa^*)^{-1} \\
      0 & 0 & -(\fa^*)^{-1} \\
      \fa^{-1}\gamma\rII \bar\slD_\fH^{-1} & -\fa^{-1} & \fa^{-1}\slD_\fN(\fa^*)^{-1} + \fa^{-1}\gamma\rII\bar\slD_\fH^{-1}\gamma\rII^*(\fa^*)^{-1} 
    \end{pmatrix}.
  \end{equation}
  After possibly shrinking $U$, we can assume that $\bar L_{\bp,0}$ is invertible for every $\bp \in U$.

  Since $Q_{\bp,0}$ is a quadratic map and
  \begin{equation}
    \label{Eq_Q0Estimate}
    \begin{split}
      \Abs{Q_{\bp,0}(\phi,a,\xi)}_Y
      &= \Abs{\bar\gamma(a)\phi}_{W^{k,p}}
        + \Abs{\mu(\phi)}_{W^{k+1,p}} 
        + \Abs{\phi}_{L^2}^2 \\
      &\lesssim
        \Abs{a}_{W^{k,p}}\Abs{\phi}_{W^{k+1,p}}
        + \Abs{\phi}_{W^{k+1,p}}^2,
    \end{split}
  \end{equation}
  $Q_{\bp,0}$ satisfies \eqref{Eq_QQuadraticEstimate};
  hence, we can apply \autoref{Lem_ZeroSetsOfFredholmMaps} to complete the proof.
\end{proof}

In the following regularity result, we decorate $X_0$ and $Y$ with superscripts indicating the choice of the differentiability and integrability parameters $k$ and $p$.

\begin{prop}
  \label{Prop_RegularityAtEpsilon=0}
  Assume the situation of \autoref{Prop_FueterKuranishiModel}.
  For each $k,\ell \in \N $ and $p,q \in (1,\infty)$ with $(k+1)p > 3$, $\ell \geq k$, and $q\geq p$, there are constants $c,\sigma > 0$ and an open neighborhood $U$ of $\bp_0$ in $\sP$ such that if $\bp \in U$ and $\hat \fc \in B_\sigma(0) \subset X^{k,p}_0$ is solution of
  \begin{equation*}
    L_{\bp,0}\hat\fc + Q_{\bp,0}(\hat\fc) + \fe_{\bp,0} = 0,
  \end{equation*}
  then $\hat \fc \in X^{\ell,q}_0$ and $\Abs{\hat\fc}_{X^{\ell,q}_0} \leq c \Abs{\hat\fc}_{X^{k,p}_0}$.
\end{prop}

\begin{proof}
  Provided $U$ is a sufficiently small neighborhood of $\bp_0$ and $0 < \sigma \ll 1$, it follows from Banach's Fixed Point Theorem that $(0,\hat \fc)$ is the unique solution in $B_\sigma(0)\subset \bar X^{k,p}$ of
  \begin{equation*}
    \bar L_{\bp,0}(0,\hat\fc) + Q_{\bp,0}(\hat\fc) + \fe_{\bp,0}
    =
    \begin{pmatrix}
      \pi\hat\fc \\ 0
    \end{pmatrix},
  \end{equation*}
  and that there exists a $(o,\hat\fd) \in B_\sigma(0)\subset \bar X^{\ell,q}$ such that
  \begin{equation*}
    \bar L_{\bp,0}(o,\hat\fd) + Q_{\bp,0}(\hat\fd) + \fe_{\bp,0}
    =
    \begin{pmatrix}
      \pi\hat\fc \\ 0
    \end{pmatrix}.
  \end{equation*}
  Since $\bar X^{\ell,q} \subset \bar X^{k,p}$ and $\Abs{(o,\hat\fd)}_{\bar X^{k,p}} \leq \Abs{(o,\hat\fd)}_{\bar X^{\ell,q}} \leq \sigma$, it follows that $(o,\hat\fd) = (0,\hat\fc)$ and thus $\hat\fc \in \bar X^{\ell,q}$ and $\Abs{\hat\fc}_{X^{\ell,q}} \leq \sigma$.
  From this it follows easily that $\Abs{\hat\fc}_{X^{\ell,q}} \leq c\Abs{\hat\fc}_{X^{k,p}}$.
\end{proof}


\section{Deformation theory around \texorpdfstring{$\epsilon = 0$}{epsilon = 0}}
\label{Sec_DeformationTheoryNearZero}

In this section we will prove \autoref{Thm_KuranishiModelNearZero}, whose hypotheses we will assume throughout.

Fix $k \in \N$ and $p \in (1,\infty)$ with $(k+1)p > 3$.
Let
\begin{multline*}
  \fM_\SW^{k,p}
  =
  \Bigg\{
    (\bp,\epsilon,[(\Phi,A)]) \in \sP\times \R^+ \times \frac{W^{k+1,p}\Gamma(\fS)\times W^{k+2,p}\sA(P)}{{W^{k+3,p}\sG(P)}}
    :
    (\epsilon,\Phi,A) \text{ satisfies }
    \eqref{Eq_BlownUpSeibergWitten}
  \Bigg\}.
\end{multline*}
For $\ell \in \N$ and $q \in (1,\infty)$ with $\ell \geq k$ and $q \geq p$, the inclusions $\fM_\SW^{\ell,q} \subset \fM_\SW^{k,p} \subset \fM_\SW$ are homeomorphisms;
see also \autoref{Prop_RegularityNearEpsilon=0}.

\subsection{Reduction to a slice}

\begin{prop}
  \label{Prop_GaugeFixingNearZero}
  Let $\fc_0 = (\Phi_0,A_0) \in \Gamma(\fS^\reg)\times \sA(P)$ and $\bp_0 \in \sP$.
  For $\bp \in \sP$, set
  \begin{equation*}
    X_\epsilon
    \coloneq W^{k+1,p}\Gamma(\fS) \oplus W^{k+2,p}\Omega^1(M,\fg_P)\oplus W^{k+2,p}\Omega^0(M,\fg_P)
  \end{equation*}
  and
  \begin{equation*}
    \Abs{(\phi,a,\xi)}_{X_\epsilon}
    \coloneq \Abs{\phi}_{W^{k+1,p}}
    + \Abs{(a,\xi)}_{W^{k,p}}
    + \epsilon \Abs{\nabla^{k+1}(a,\xi)}_{L^p}
    + \epsilon^2 \Abs{\nabla^{k+2}(a,\xi)}_{L^p}.
  \end{equation*}
  There exist a neighborhood $U$ of $\bp_0 \in \sP$ and constants $\sigma,\epsilon_0,c > 0$ such that the following holds.
  If $\bp \in U$, $\hat\fc = (\phi,a) \in X_\epsilon$, and $\epsilon \in (0,\epsilon_0]$ are such that
  \begin{equation*}
    \Abs{\hat\fc}_{X_\epsilon} < \sigma,
  \end{equation*}
  then there exists a $W^{k+3,p}$ gauge transformation $g$ such that $(\tilde\phi,\tilde a) = g(\fc_0+\hat\fc) - \fc_0$ satisfies
  \begin{equation*}
    \Abs{(\tilde\phi,\tilde a)}_{X_\epsilon} < c\sigma,
  \end{equation*}
  and
  \begin{equation}
    \label{Eq_GaugeFixingNearZero}
    \epsilon^2 \rd_{A_0B}^*\tilde a - \rho^*(\tilde\phi\Phi_0^*) = 0.
  \end{equation}
\end{prop}

\begin{proof}
  To construct $g$, note that for $g = e^\xi$ with $\xi \in W^{k+3,p}\Omega^0(M,\fg_P)$ we have
  \begin{equation*}
    \tilde \phi = \rho(\xi)\Phi_0 + \rho(\xi)\phi + \fm(\xi)
    \qandq
    \tilde a = a - \rd_{A_0}\xi - [a,\xi] + \fn(\xi).
  \end{equation*}
  Here $\fn$ and $\fm$ denote expressions which are algebraic and at least quadratic in $\xi$.
  The gauge fixing condition \eqref{Eq_GaugeFixingNearZero} can thus be written as
  \begin{equation*}
    \fl_\epsilon \xi + \fd_\epsilon\xi + \fq_\epsilon(\xi) + \fe_\epsilon = 0.
  \end{equation*}
  with
  \begin{align*}
    \fl_\epsilon
    &\coloneq
      \epsilon^2 \Delta_{A_0B}  + R_{\Phi_0}^*R_{\Phi_0}, &
    \fd_\epsilon
    &\coloneq
      \epsilon^2 \rd_{A_0B}^*[a,\cdot] + \rho^*(\rho(\cdot)\phi\Phi_0^*), \\
    \fq_\epsilon(\xi)
    &\coloneq
      \epsilon^2 \rd_{A_0B}^*\fn(\xi) + \rho^*(\fm(\xi)\Phi_0^*), &
    \fe_\epsilon
    &\coloneq
      - \epsilon^2\rd^*_{A_0B}a - \rho^*(\phi\Phi_0).
  \end{align*}

  Denote by $G_\epsilon$ the Banach space $W^{k+3,p}\Omega^0(M,\fg_P)$ equipped with the norm
  \begin{equation}
    \label{Eq_GaugeFixingNearZeroPDE}
    \Abs{\xi}_{G_\epsilon}
    \coloneq \Abs{\xi}_{W^{k+1}}
    + \epsilon\Abs{\nabla^{k+2}\xi}_{L^p}
    + \epsilon^2\Abs{\nabla^{k+3}\xi}_{L^p}.
  \end{equation}
  Since $\Phi_0$ is regular, the operator $R_{\Phi_0}^*R_{\Phi_0}$ is positive definite;
  hence, for $\epsilon \ll 1$, the operator
  \begin{equation*}
    \fl_\epsilon \co G_\epsilon \to W^{k+1,p}\Omega^0(M,\fg_P)
  \end{equation*}
  is invertible and $\Abs{\fl_\epsilon^{-1}}_{\cL(G_\epsilon,W^{k+1,p})}$ is bounded independent of $\epsilon$. 
  Since
  \begin{equation*}
    \Abs{\fd_\epsilon}_{\cL(G_\epsilon,W^{k+1,p})} \lesssim \sigma \ll 1,
  \end{equation*}
  $\fl_\epsilon + \fd_\epsilon\co G_\epsilon \to W^{k+1,p}\Omega^0(M,\fg_P)$ will also be invertible with inverse bounded independent of $\epsilon$ and $\sigma$.
  Since the non-linearity $\fq_\epsilon \co G_\epsilon \to W^{k+1,p}\Omega^0(M,\fg_P)$ satisfies \eqref{Eq_QQuadraticEstimate} and $\Abs{\fe_\epsilon} \lesssim \sigma \ll 1$, it follows from Banach's Fixed Point Theorem that, for a suitable $c> 0$, there exists a unique solution $\xi \in B_{c\sigma}(0) \subset G_\epsilon$ to \eqref{Eq_GaugeFixingNearZeroPDE}.
  This proves the existence of the desired gauge transformation, and local uniqueness.
  Global uniqueness follows by an argument by contradiction, cf.~\cite[Proposition 4.2.9]{Donaldson1990}.
\end{proof}

\begin{prop}
  \label{Prop_Lepsilon}
  Let $\fc_0 = (\Phi_0,A_0)$ be a lift of a Fueter section $s_0 \in \Gamma(\fX)$ for $\bp_0 \in \sP$.
  Fix $\epsilon > 0$ and $\bp \in \sP$.
  Define a linear map $L_{\bp,\epsilon} \co X_\epsilon \to Y$ and a quadratic map $Q_{\bp,\epsilon} \co X_0 \to Y$ by
  \begin{align*}
    L_{\bp,\epsilon}
    &\coloneq
    \begin{pmatrix}
      -\slD_{A_0} & -\gamma(\cdot)\Phi_0 & -\rho(\cdot)\Phi_0 \\
      -2*\mu(\Phi_0,\cdot) & *\epsilon^2\rd_{A_0} & \epsilon^2\rd_{A_0} \\
      -\rho^*(\cdot\,\Phi_0^*) & \epsilon^2\rd_{A_0}^* & 0 \\
      2\inner{\Phi_0}{\cdot}_{L^2} & 0 & 0
    \end{pmatrix}
    \qand \\
    Q_{\bp,\epsilon}(\phi,a,\xi)
    &\coloneq
    \begin{pmatrix}
      -\bar\gamma(a)\phi \\
      \frac12\epsilon^2*[a\wedge a] -*\mu(\phi) \\
      0 \\
      \Abs{\phi}_{L^2}^2
    \end{pmatrix},
  \end{align*}
  respectively.
  With $\fe_{\bp,0}$ as in \autoref{Prop_L0} set
  \begin{equation*}
    \fe_{\bp,\epsilon} \coloneq \fe_{\bp,0} + \epsilon^2(0,*\varpi F_{A_0},0).
  \end{equation*}
  There exist a neighborhood $U$ of $\bp_0 \in \sP$ and $\sigma > 0$ such that $\hat\fc = (\phi,a,\xi) \in B_\sigma(0) \subset X_\epsilon$ satisfies
  \begin{equation}
    \label{Eq_Lepsilon}
    L_{p,\epsilon} \hat\fc + Q_{\bp,\epsilon}(\hat\fc) + \fe_{\bp,\epsilon} = 0
  \end{equation}
  if and only if $\xi = 0$, $(A,\Phi) = (A_0+a,\Phi_0+\phi)$ satisfies
  \begin{equation*}
    \slD_A \Phi = 0, \quad
    \epsilon^2\varpi F_A = \mu(\Phi),
    \qandq \Abs{\Phi}_{L^2} = 1,
  \end{equation*}
  and
  \begin{equation}
    \label{Eq_GaugeFixingNearZero_0}
    \epsilon^2 \rd_{A_0}^*a - \rho^*(\phi\Phi_0^*) = 0.
  \end{equation}
\end{prop}

\begin{proof}
  We only need to show that $\xi$ vanishes, but this follows from the same argument as in the proof of \autoref{Prop_L0} because $\rd_AF_A = 0$.
\end{proof}

\begin{cor}
  \label{Cor_HomeomorphismTofMSWEpsilon}
  There exist $\epsilon,\sigma>0$ such the map
  \begin{equation*}
    \set*{
      (\bp,\epsilon,\phi,a,\xi) \in \sP \times U \times (0,\epsilon_0) \times B_\sigma(0)
      \text{ satisfying }
      \eqref{Eq_Lepsilon}
  } \to \fM_\SW
  \end{equation*}
  defined by
  \begin{equation*}
    (\bp,\epsilon,\phi,a,\xi) \mapsto (\bp,\epsilon,[(\Phi_0+\phi,A_0+a)])
  \end{equation*}
  is a homeomorphism onto the intersection of $\fM_\SW$ with a neighborhood of $([\fc_0],\bp_0,0)$ in $\overline{\fM}_\SW$.
\end{cor}

\subsection{Inverting \texorpdfstring{$\bar L_{\bp,\epsilon}$}{the completed linearization}}

Define the Banach space $(\bar X_\epsilon,\Abs{\cdot}_{\bar X_\epsilon})$ by
\begin{equation*}
  \bar X_\epsilon
  \coloneq
    \coker (\rd\fF)_{s_0} \oplus W^{k+1,p}\Gamma(\fS)
    \oplus W^{k+2,p}\Omega^1(M,\fg_P)\oplus W^{k+2,p}\Omega^0(M,\fg_P)
\end{equation*}
with norm
\begin{equation*}
  \Abs{(o,\hat\fc)}_{\bar X_\epsilon}
  \coloneq
    \abs{o} + \Abs{\hat\fc}_{X_\epsilon},
\end{equation*}
and the Banach space $(\bar Y,\Abs{\cdot}_{\bar Y})$ by
\begin{equation*}
  \bar Y
  \coloneq I_\del \oplus \R \oplus W^{k,p}\Gamma(\fS)
  \oplus W^{k+1,p}\Omega^1(M,\fg_P)\oplus W^{k+1,p}\Omega^0(M,\fg_P)
\end{equation*}
with the obvious norm.
Let $\bar\slD_\fH \co \coker(\rd\fF)_{s_0} \oplus W^{k+1,p}\Gamma(\fS) \to I_\del \oplus \R \oplus W^{k,p}\Gamma(\fS)$ be as in the Proof of \autoref{Prop_FueterKuranishiModel}.
Define $\bar L_{\bp,\epsilon}\co \bar X_\epsilon \to \bar Y$ by
\begin{equation}
  \bar L_{\bp,\epsilon}
  \coloneq
    \begin{pmatrix}
      -\bar\slD_\fH & \gamma\rII^* &  0 \\
      -\gamma\rII & -\slD_\fN & -\fa \\
      0 & -\fa^* & \epsilon^2\delta_{A_0} 
    \end{pmatrix}
\end{equation}
with
\begin{equation*}
  \delta_{A_0}
  \coloneq
  \begin{pmatrix}
    *\rd_{A_0} & \rd_{A_0} \\
    \rd_{A_0}^* & 0
  \end{pmatrix}.
\end{equation*}

\begin{prop}
  \label{Prop_SmallEpsilonLinearisationIsInvertible}
  There exist $\epsilon_0,c > 0$, and a neighborhood $U$ of $\bp_0 \in \sP$ such that, for all  $\bp \in U$ and $\epsilon \in (0,\epsilon_0]$, $\bar L_{\bp,\epsilon} \co \bar X_\epsilon \to \bar Y$ is invertible, and $\Abs*{\bar L_{\bp,\epsilon}^{-1}}
  \leq c$.
\end{prop}

The proof of this result relies on the following two observations.

\begin{prop}
  \label{Prop_InvertibleMatrix}
  For $i = 1, 2, 3$, let $V_i$ and $W_i$ be Banach spaces, and set
  \begin{equation*}
      V \coloneq \bigoplus_{i=1}^3 V_i \qandq
      W \coloneq \bigoplus_{i=1}^3 W_i.
  \end{equation*}  
  Let $L \co V \to W$ be a bounded linear operator of the form
  \begin{equation*}
    L = 
    \begin{pmatrix}
      D_1 & B_+ & 0 \\
      B_- & D_2 & A_+ \\
      0 & A_- & D_3
    \end{pmatrix}.
  \end{equation*}
  If the operators
  \begin{align*}
  &D_1\co V_1 \to W_1, \\
  &A_- \co V_2 \to W_3, \qandq \\
  &Z \coloneq A_+ - (D_2 - B_- D_1^{-1} B_+) A_-^{-1} D_3 \co V_3 \to W_2
  \end{align*}
   are invertible, then there exists a bounded linear operator $R \co W \to V$ such that
  \begin{equation*}
    RL = \id_W.
  \end{equation*}
  Moreover, the operator norm $\Abs{R}$ is bounded by a constant depending only on $\Abs{L}$, $\Abs{D_1^{-1}}$, $\Abs{A_-^{-1}}$, and $\Abs{Z^{-1}}$. 
\end{prop}

\begin{prop}
  \label{Prop_ZInvertible}
  There exist $\epsilon_0,c > 0$ such that for $\epsilon \in (0,\epsilon_0]$, the linear map
  \begin{multline*}
    \fz_\epsilon \coloneq \fa + \epsilon^2\(\slD_\fN  + \gamma \rII \slD_\fH^{-1} \gamma \rII^*\) (\fa^*)^{-1}\delta_{A_0} \co W^{k+2,p}\Omega^1(M,\fg_P)\oplus W^{k+2,p}\Omega^0(M,\fg_P)
    \to W^{k,p}\Gamma(\fN)
  \end{multline*}
  is invertible, and
  \begin{equation*}
    \Abs{\fz_\epsilon^{-1}(a,\xi)}_{W^{k,p}}
    + \epsilon\Abs{\nabla^{k+1}\fz_\epsilon^{-1}(a,\xi)}_{L^p}
    + \epsilon^2\Abs{\nabla^{k+2}\fz_\epsilon^{-1}(a,\xi)}_{L^p}
    \leq c \Abs{(a,\xi)}_{W^{k,p}}.
  \end{equation*}
\end{prop}

\begin{proof}[Proof of \autoref{Prop_SmallEpsilonLinearisationIsInvertible}]
  It suffices to prove the result for $\bp = \bp_0$, for then it follows for $\bp$ close to $\bp_0$.

  Recall that
  \begin{align*}
    \bar X_\epsilon
    &= \coker (\rd\fF)_{s_0} \oplus W^{k+1,p}\Gamma(\fH) \\
    &\quad\oplus W^{k+1,p}\Gamma(\fN) \\
    &\quad\oplus W^{k+2,p}\Omega^1(M,\fg_P)\oplus W^{k+2,p}\Omega^0(M,\fg_P), \\
    \bar Y
    &= I_\del \oplus \R \oplus W^{k,p}\Gamma(\fH) \\
    &\quad\oplus W^{k,p}\Gamma(\fN) \\
    &\quad\oplus W^{k+1,p}\Omega^1(M,\fg_P)\oplus W^{k+1,p}\Omega^0(M,\fg_P),
  \end{align*}
  and $\bar L_{\bp_0,\epsilon}$ can be written as
  \begin{equation*}
    \begin{pmatrix}
      -\bar\slD_\fH & \gamma\rII^* & 0 \\
      -\gamma\rII & -\slD_\fN & -\fa \\
      0 & -\fa^* & \epsilon^2\delta_{A_0}
    \end{pmatrix}  
  \end{equation*}
  with
  \begin{equation*}
    \delta_{A_0}
    =
    \begin{pmatrix}
      *\rd_{A_0} & \rd_{A_0} \\
      \rd_{A_0}^* & 0
    \end{pmatrix}.
  \end{equation*}

  The operators $\bar\slD_\fH \co \coker (\rd\fF)_{s_0} \oplus W^{k+1,p}\Gamma(\fH) \to I_\del \oplus \R \oplus W^{k,p}\Gamma(\fH)$ and $\fa^*\co W^{k+1,p}\Gamma(\fN) \to W^{k+1,p}\Omega^1(M,\fg_P) \oplus W^{k+1,p}\Omega^0(M,\fg_P)$ both are invertible with uniformly bounded inverses, and by \autoref{Prop_ZInvertible} the same holds for $\fz_\epsilon$, provided $\epsilon \ll 1$.
  Thus, according to \autoref{Prop_InvertibleMatrix}, $\bar L_{\bp_0,\epsilon}$ has a left inverse $R_\epsilon \co \bar Y_0 \to \bar X_\epsilon$ whose norm can be bounded independent of $\epsilon$.

  To see that $R_\epsilon$ is also a right inverse, observe that $L_{\bp_0,\epsilon}$ is a formally self-adjoint elliptic operator and, hence, $L_{\bp_0,\epsilon} \co X_\epsilon \to Y$ is Fredholm of index zero.
  Consequently, $\bar L_{\bp_0,\epsilon}$ is Fredholm of index zero.
  The existence of $R_\epsilon$ shows that $\ker \bar L_{\bp_0,\epsilon} = 0$ and thus $\coker \bar L_{\bp_0,\epsilon} = 0$.
  By the Open Mapping Theorem, $\bar L_{\bp_0,\epsilon}$ has an inverse $\bar L_{\bp_0,\epsilon}^{-1}$.
  It must agree with $R_\epsilon$ since $R_\epsilon = R_\epsilon \bar L_{\bp_0,\epsilon} \bar L_{\bp_0,\epsilon}^{-1} = \bar L_{\bp_0,\epsilon}^{-1}$.
\end{proof}

\begin{proof}[Proof of \autoref{Prop_InvertibleMatrix}]
  The left inverse of $L$ can be found by Gauss elimination \cite[Chapter 2]{Strang2016}.
  The formula found in this way is rather unwieldy;
  fortunately, however, the precise formula is not needed.
  \setcounter{step}{0}
  \begin{step}
    Set
    \begin{equation*}
      E \coloneq (D_2 -B_- D_1^{-1} B_+ ) A_-^{-1} \co W_3 \to W_2
  \end{equation*}
    The linear map $P \co  W \to V$ defined by
    \begin{equation*}
      P \coloneq \begin{pmatrix}
        D_1^{-1} & 0 & 0 \\
        0 & 0 & A_-^{-1} \\
        - Z^{-1} B_- D_1^{-1} & Z^{-1} & - Z^{-1} E
      \end{pmatrix}
    \end{equation*}
    satisfies
    \begin{equation*}
      PL = 
      \begin{pmatrix}
        \id_{V_1} & D_1^{-1} B_+ & 0 \\
        0 & \id_{V_2} & A_-^{-1} D_3 \\
        0 & 0  & \id_{ V_3}.
      \end{pmatrix}.
    \end{equation*}    
    Moreover, $\Abs{P}$ and $\Abs{PL}$ are bounded by a constant depending only $\Abs{L}$, $\Abs{D_1^{-1}}$, $\Abs{A_-^{-1}}$, and $\Abs{Z^{-1}}$. 
  \end{step}

  This can be verified directly;
  alternatively, one can check that a sequence of row operations transforms the augmented matrix $( L \,|\, \id)$ as follows: 
  \begingroup
  \allowdisplaybreaks
  \begin{align*}
    &
      \begin{pmatrix}[ccc|ccc]
        D_1 & B_+ & 0 & \id_{W_1} & 0 & 0 \\
        B_- & D_2 & A_+ & 0 & \id_{W_2} & 0 \\
        0 & A_- & D_3 & 0 & 0 & \id_{W_3}
      \end{pmatrix} \\
    \leadsto
    &
      \begin{pmatrix}[ccc|ccc]
        \id_{V_1} & D_1^{-1} B_+ & 0 & D_1^{-1} & 0 & 0 \\
        B_- & D_2 & A_+ & 0 & \id_{W_2} & 0 \\
        0 & \id_{V_2} & A_{-}^{-1} D_3 & 0 & 0 & A_-^{-1}
      \end{pmatrix} \\
    \leadsto
    &
      \begin{pmatrix}[ccc|ccc]
        \id_{V_1} & D_1^{-1} B_+ & 0 & D_1^{-1} & 0 & 0 \\
        0 & \id_{V_2} & A_{-}^{-1} D_3 & 0 & 0 & A_-^{-1} \\
        B_- & D_2 & A_+ & 0 & \id_{W_2} & 0 
      \end{pmatrix}\\
    \leadsto 
    &
      \begin{pmatrix}[ccc|ccc]
        \id_{V_1} & D_1^{-1} B_+ & 0 & D_1^{-1} & 0 & 0 \\
        0 & \id_{V_2} & A_{-}^{-1} D_3 & 0 & 0 & A_-^{-1} \\
        0 & D_2 - B_- D_1^{-1} B_+ & A_+ & - B_- D_1^{-1} & \id_{W_2} & 0 
      \end{pmatrix} \\
    \leadsto
    & 
      \begin{pmatrix}[ccc|ccc]
        \id_{V_1} & D_1^{-1} B_+ & 0 & D_1^{-1} & 0 & 0 \\
        0 & \id_{V_2} & A_{-}^{-1} D_3 & 0 & 0 & A_-^{-1} \\
        0 & 0 & Z & - B_- D_1^{-1} & \id_{W_2} & - E
      \end{pmatrix}\\
    \leadsto
    & 
      \begin{pmatrix}[ccc|ccc]
        \id_{V_1} & D_1^{-1} B_+ & 0 & D_1^{-1} & 0 & 0 \\
        0 & \id_{V_2} & A_{-}^{-1} D_3 & 0 & 0 & A_-^{-1} \\
        0 & 0 & \id_{V_3} & - Z^{-1} B_- D_1^{-1} & Z^{-1} & - Z^{-1}E
      \end{pmatrix}.
  \end{align*}
  \endgroup

  \begin{step}
    The inverse of $PL$ is
    \begin{equation*}
      (PL)^{-1} =
      \begin{pmatrix}
        \id_{V_1} & -D_1^{-1} B_+ & D_1^{-1} B_+A_-^{-1} D_3 \\
        0 & \id_{V_2} & -A_-^{-1} D_3 \\
        0 & 0  & \id_{ V_3}.
      \end{pmatrix}.
    \end{equation*}
    Hence, $R \coloneq (PL)^{-1}P$ is the desired left inverse.
  \end{step}

  It can be verified directly that the above expression gives the inverse of $PL$.
\end{proof}

\begin{proof}[Proof of \autoref{Prop_ZInvertible}]
  It suffices to show that the linear maps $\tilde \fz_\epsilon \coloneq \fa^*\fz_\epsilon 
  $ are uniformly invertible.
  A short computation using \autoref{Prop_DMu_D*Mu} shows that
  \begin{equation*}
    \tilde \fz_\epsilon
    = \epsilon^2\delta_{A_0}^2 + \fa^*\fa + \epsilon^2\fe
  \end{equation*}
  where $\fe$ is a zeroth order operator which factors through $W^{k+1,p} \to W^{k+1,p}$.
  Since $\Phi_0$ is regular, $\fa^*\fa$ is positive definite and, hence, for $\epsilon \ll 1$, $\fa^*\fa + \epsilon^2\delta_{A_0}^2$ is uniformly invertible.
  Since $\epsilon \ll 1$, $\epsilon^2\fe$ is a small perturbation of order $\epsilon$ and thus $\tilde\fz_\epsilon$ is uniformly invertible.
\end{proof}

The above analysis yields the following regularity result, in which we decorate $X_\epsilon$ and $Y$ with superscripts indicating the choice of the differentiability and integrability parameters $k$ and $p$.
The proof is almost identical to that of \autoref{Prop_RegularityAtEpsilon=0}, and will be omitted.

\begin{prop}
  \label{Prop_RegularityNearEpsilon=0}
  For each $k,\ell \in \N $ and $p,q \in (1,\infty)$ with $(k+1)p > 3$, $\ell \geq k$, and $q\geq p$, there are constants $c,\sigma,\epsilon_0 > 0$ and an open neighborhood $U$ of $\bp_0$ in $\sP$ such that if $\epsilon \in (0,\epsilon_0]$, $\bp \in U$, and $\hat \fc \in B_\sigma(0) \subset X^{k,p}_\epsilon$ is solution of
  \begin{equation*}
    L_{\bp,\epsilon}\hat\fc + Q_{\bp,\epsilon}(\hat\fc) + \fe_{\bp,\epsilon} = 0,
  \end{equation*}
  then $\hat \fc \in X^{\ell,q}_\epsilon$ and $\Abs{\hat\fc}_{X^{\ell,q}_\epsilon} \leq c \Abs{\hat\fc}_{X^{k,p}_\epsilon}$.
\end{prop}

\subsection{Proof of \autoref{Thm_KuranishiModelNearZero}}

Since $Q_{\bp,\epsilon}$ is quadratic and
\begin{align*}
  \Abs{Q_{\bp,\epsilon}(\phi,a,\xi)}_Y
  &\leq
    \Abs{\bar\gamma(a)\phi}_{W^{k,p}}
      + \epsilon^2 \Abs{[a\wedge a]}_{W^{k+1,p}}
      + \Abs{\mu(\phi)}_{W^{k+1,p}} 
      + \Abs{\phi}_{L^2}^2 \\
  &\lesssim
      \Abs{a}_{W^{k,p}}\Abs{\phi}_{W^{k+1,p}} \\
    &\quad
      + \(\Abs{a}_{W^{k,p}} + \epsilon \Abs{\nabla^{k+1}a}_{L^p} + \epsilon^2 \Abs{\nabla^{k+2} a}_{L^p}\)^2
      + \Abs{\phi}_{W^{k+1,p}}^2,
\end{align*}
$Q_{\bp,\epsilon}$ satisfies \eqref{Eq_QQuadraticEstimate},
and because of \autoref{Prop_SmallEpsilonLinearisationIsInvertible}
we can apply \autoref{Lem_ZeroSetsOfFredholmMaps} to construct a smooth map $\ob_{\circ} \co U\times (0,\epsilon_0)\times\sI_\del \to \coker (\rd\fF)_{s_0}$ and a map $\fx_{\circ} \co \ob^{-1}(0) \to \overline\fM_\SW$ which is a homeomorphism onto the intersection of $\fM_\SW$ with a neighborhood of $[(A_0,\Phi_0)]$.
(There is a slight caveat in the application of \autoref{Lem_ZeroSetsOfFredholmMaps}: the Banach space $X_\epsilon$ does depend on $\bp$ and $\epsilon$ and $Y$ depends on $\bp$.
The dependence, however, is mostly harmless as different values of $\bp$ and $\epsilon$ lead to naturally isomorphic Banach spaces.)
For what follows it will be important to know that maps $\ob_{\circ}$ and $\fx_{\circ}$ are uniquely characterized as follows:
for $\bp$ in the open neighborhood $U$ of $\bp_0 \in \sP$,
$d$ in the open neighborhood $\sI_\del$ of $0 \in I_\del$,
and $\epsilon \in (0,\epsilon_0)$,
there is a unique solution $\bar \fc = \bar \fc(\bp,\epsilon,d) \in B_\sigma(0) \subset \bar X_\epsilon$ of
\begin{equation}
  \label{Eq_ExtendedSW}
  \bar L_{\bp,\epsilon}\bar\fc + Q_{\bp,\epsilon}(\bar\fc) + \fe_{\bp,\epsilon} = d \in \sI_\del \subset \bar Y;
\end{equation}
$\ob_{\circ}(\bp,\epsilon,d)$ is the component of $\bar \fc(\bp,\epsilon,d)$ in $\coker (\rd\fF)_{s_0}$
and if $\ob_{\circ}(\bp,\epsilon,d) = 0$ and $\hat\fc$ denotes the component of $\fc(\bp,\epsilon,d)$ in $X_\epsilon$,
then $\fx_{\circ}(\bp,\epsilon,d) = \fc_0 + \hat\fc$.
(Similar, setting $\epsilon = 0$ yields $\ob_\del$ and $\fx_\del$.)

We define $\ob \co U \times [0,\epsilon_0) \times \sI_\del \to \coker (\rd\fF)_{s_0}$ by
\begin{equation*}
  \ob(\cdot,\epsilon,\cdot) =
  \begin{cases}
    \ob_{\circ}(\cdot,\epsilon,\cdot) & \text{for } \epsilon \in (0,\epsilon_0) \\
    \ob_{\del}(\cdot,\cdot) & \text{for } \epsilon = 0,
  \end{cases}
\end{equation*}
and $\fx \co \ob^{-1}(0) \to \overline\fM_\SW$ by
\begin{equation*}
    \fx(\cdot,\epsilon,\cdot) =
  \begin{cases}
    \fx_{\circ}(\cdot,\epsilon,\cdot) & \text{for } \epsilon \in (0,\epsilon_0) \\
    \fx_{\del}(\cdot,\cdot) & \text{for } \epsilon = 0.
  \end{cases}
\end{equation*}
In order to prove \autoref{Thm_KuranishiModelNearZero} we need to understand the regularity of $\ob$ near $\epsilon = 0$;
in other words: we need to understand how $\ob_{\circ}$ and $\ob_\del$ fit together.

Let $k \in \N$ and $p \in (1,\infty)$ be the differentiability and integrability parameters used in the definition of $\bar X_\epsilon$.
If necessary, shrink $U$ and $\sI_\del$ and decrease $\sigma$ so that the proof of \autoref{Prop_FueterKuranishiModel} goes through and \autoref{Prop_L0} holds with differentiability parameter $k+2r+2$ and integrability parameter $p$.
Observe that $\bar X_0^{k+2,p} \subset \bar X_\epsilon$ and the norm of the inclusion can be bounded by a constant independent of $\epsilon$.

\begin{prop}
  \label{Prop_Approximation}
  For every $(\bp,d) \in U\times\sI_\del$,
  there are $\bar\fc_0(\bp,d) \in \bar X_0^{k+2r+2,p}$ and
  $\hat\fc_i(\bp,d) \in \bar X_0^{k+2(r-i)+2,p}$ $($for $i = 1, \ldots, r${}$)$ depending smoothly on $\bp$ and $d$,
  such that, for $m,n \in \N$ with $m+n \leq 2r$,
  \begin{equation*}
    \tilde\fc(\bp,\epsilon,d)
    \coloneq
      \bar\fc_0 + \sum_{i=1}^r \epsilon^{2i}\hat\fc_i
  \end{equation*}
  satisfies
  \begin{equation}
    \label{Eq_ApproximationEstimate}
    \Abs*{\nabla^m_{U\times\sI_\del}\del_\epsilon^n\paren*{
      \bar\fc(\bp,\epsilon,d) -  \tilde\fc(\bp,\epsilon,d)
    }}_{\bar X_\epsilon} = O(\epsilon^{2k+2-n}).
  \end{equation}
\end{prop}

\begin{proof}
  We construct $\tilde\fc$ by expanding \eqref{Eq_ExtendedSW} in $\epsilon^2$.
  To this end we write
  \begin{equation*}
    \bar L_{\bp,\epsilon} = \bar L_{\bp,0} + \epsilon^2 \ell_\bp, \quad
    Q_{\bp,\epsilon}, = Q_{\bp,0} + \epsilon^2 q_\bp, \qandq
    \fe_{\bp,\epsilon} = \fe_{\bp,0} + \epsilon^2 \hat\fe_\bp,
  \end{equation*}
  with
  \begin{equation*}
    \ell_\bp 
    \coloneq
    \begin{pmatrix}
      0 & &  \\
      & 0 & \\
      & & \delta_{A_0}
    \end{pmatrix}, \quad
    q_{\bp}(\phi,a,\xi)
    \coloneq
    \begin{pmatrix}
      0 \\
      0 \\
      \frac12 *[a\wedge a]
    \end{pmatrix}, \qandq
    \hat\fe_\bp
    \coloneq
    \begin{pmatrix}
      0 \\
      0 \\
      *\varpi F_{A_0}
    \end{pmatrix}.
  \end{equation*}
  Observe that $\ell_\bp \co \bar X^{\ell,p}_0 \to \bar Y^{\ell-2,p}$ is a bounded linear map and $q_\bp\co \bar X^{\ell,p}_0 \to \bar Y^{\ell-2,p}$ is a bounded quadratic map.

  \setcounter{step}{0}
  \begin{step}
    Construction of $\bar\fc_0$ and $\hat\fc_i$.
  \end{step}

  By Banach's Fixed Point Theorem, there is a unique solution $\bar\fc_0 \in B_{\sigma}(0) \subset \bar X_0^{k+2r+2,p}$ of
  \begin{equation*}
    \bar L_{\bp,0}\bar\fc_0 + Q_{\bp,0}(\bar\fc_0) + \fe_{\bp,0} = d \in \sI_\del \subset \bar Y^{k+2r+2}.
  \end{equation*}
  Moreover, $\bar\fc_0$ actually lies in $B_{\sigma/2}(0) \subset \bar X_0^{k+2r+2,p}$ provided $U$ and $\sI_\del$ have been chosen sufficiently small.
  We have
  \begin{equation*}
    \bar L_{\bp,\epsilon}\bar\fc_0 + Q_{\bp,0}(\bar\fc_\epsilon) + \fe_{\bp,\epsilon} - d
    =
     \epsilon^2 \fr_0(\bp,d) \in \bar Y^{k+2(r-1)+2,p}.
  \end{equation*}
  with
  \begin{equation*}
    \fr_0(\bp,d) \coloneq \ell_\bp\bar\fc_0 + q_{\bp}(\bar\fc_0) + \hat\fe_\bp.
  \end{equation*}
  Since $\sigma \ll 1$, the operator $\bar L_{\bp,0}+ 2Q_{\bp,0}(\bar\fc_0,\cdot) \co \bar X_0^{k+2(r-i)+2,p} \to \bar Y_0^{k+2(r-i)+2,p}$ is invertible for $i=1,\ldots,r$.%
  \footnote{%
    Here we engage in the slight abuse of notation to use the same notation for a bilinear map and its associated quadratic form.
  }
  Recursively define $\fr_i(\bp,d) \in \bar Y^{k+2(r-i-1)+2,p}$ by
  \begin{equation*}
    \epsilon^{2i+2}\fr_i
    \coloneq
      \bar L_{\bp,\epsilon}\bar\fc_\epsilon^i + Q_{\bp,0}(\bar\fc_\epsilon^i) + \fe_{\bp,\epsilon} - d
  \end{equation*}
  with
  \begin{equation*}    
    \tilde\fc(\epsilon,\bp,d)
    \coloneq
      \bar\fc_0 + \epsilon^2\hat\fc_1 + \cdots + \epsilon^{2i}\hat\fc_i,
  \end{equation*}
  and define $\hat\fc_{i+1} \in \bar X_0^{k+2(r-i-1)+2}$ to be the unique solution of
  \begin{equation*}
    \bar L_{\bp,0}\hat\fc_{i+1} + 2Q_{\bp,0}(\bar\fc_0,\hat\fc_{i+1})
    =
    \fr_i.
  \end{equation*}
  Clearly, $\bar\fc_0, \hat\fc_1, \ldots, \hat\fc_r$ depend smoothly on $\bp$ and $d$.

  \begin{step}
    We prove \eqref{Eq_ApproximationEstimate}.
  \end{step}
    
  We have
  \begin{equation}
    \label{Eq_TildeCBarCComparison}
    \bar L_{\bp,\epsilon} \bar\fc_\epsilon
    + Q_{\bp,\epsilon}(\bar\fc_\epsilon)
    - \bar L_{\bp,\epsilon} \tilde\fc
    - Q_{\bp,\epsilon}(\tilde\fc)
    =
      - \epsilon^{2k+2} \fr
  \end{equation}
  with $\fr = \fr_r$ as in the previous step.
  Both $\bar\fc$ and $\tilde\fc$ are small in $\bar X_\epsilon$;
  hence, it follows that
  \begin{equation*}
    \Abs{\bar\fc-\tilde\fc}_{\bar X_\epsilon} = O(\epsilon^{2k+2}).
  \end{equation*}
  To obtain estimates for the derivatives of $\bar\fc-\tilde\fc$,
  we differentiate \eqref{Eq_TildeCBarCComparison} and obtain an identity whose left-hand side is
  \begin{equation*}
    \bar L_{\bp,0} \nabla^m\del_\epsilon^n(\bar\fc-\tilde\fc)
      + 2Q_{\bp,0}\(\bar\fc, \nabla^m\del_\epsilon^n(\bar\fc-\tilde\fc)\)
      + 2Q_{\bp,0}\(\bar\fc-\tilde\fc, \nabla^m\del_\epsilon^n\tilde\fc\) 
  \end{equation*}
  and whose right-hand side can be controlled in terms of the lower order derivatives of $\hat\fd_\epsilon^k$.
  This gives the asserted estimates.  
\end{proof}

From \autoref{Prop_Approximation} it follows that $\fx$ is a homeomorphism onto its image and
that the estimate in \autoref{Thm_KuranishiModelNearZero}\itref{Thm_KuranishiModelNearZero_Expansion} holds with $\widehat\ob_i$ denoting the component of $\hat\fc_i$ in $\coker (\rd\fF)_{s_0}$.
This expansion implies that $\ob$ is $C^{2r-1}$ up to $\epsilon = 0$.
\qed


\section{Proof of \autoref{Thm_GenericOneParameterFamilies}}
\label{Sec_GenericOneParameterFamilies}

The first part of \autoref{Thm_GenericOneParameterFamilies} follows directly from \autoref{Thm_KuranishiModelNearZero}, since in this situation
\begin{equation*}
  \ob(\epsilon,t) = \dot\lambda(0) \cdot t + O(t^2) + O(\epsilon^2)
\end{equation*}
because $\ob_\del(t) = \dot\lambda(0) \cdot t + O(t^2)$.
The second part requires a more detailed analysis to show that
\begin{equation*}
  \ob(\epsilon,t) = \dot\lambda(0) \cdot t - \delta \epsilon^4 + O(t^2) + O(\epsilon^6).
\end{equation*}

To establish the above expansion of $\ob$, we solve
\begin{equation*}
  \bar L_{\epsilon}(o_\epsilon,\hat\fc) + Q_\epsilon(\hat\fc)
  +
  \begin{pmatrix}
    0 \\
    0 \\
    \epsilon^2 *\varpi F_{A_0} \\
    0
  \end{pmatrix} = 0
\end{equation*}
by formally expanding in $\epsilon^2$.
Inspection of \eqref{Eq_L0Inverse} shows that the obstruction to being able to solve $L_0\hat\fc = (\psi,b,\eta)$ is
\begin{equation*}
  -\pi\paren[\big]{\psi + \gamma\rII(\fa^*)^{-1}(b,\eta)}
\end{equation*}
where $\pi$ denotes the $L^2$--orthogonal projection onto $\ker\slD_\fH$.
In the case at hand, $\ker\slD_\fH = \R\Span{\Phi_0}$, and we have
\begin{align*}
  \inner{\Phi_0}{\gamma\rII^*(\fa^*)^{-1}(b,\eta)}_{L^2}
  &=
    \sum_{i=1}^3 \inner{\Phi_0}{\gamma(e_i)\nabla_{e_i}(\fa^*)^{-1}(b,\eta)}_{L^2} \\
  &=
    \sum_{i=1}^3 \inner{\gamma(e_i)\nabla_{e_i}\Phi_0}{(\fa^*)^{-1}(b,\eta)}_{L^2} = 0
\end{align*}
since $\fa \co \Omega^1(M,\fg_P) \oplus \Omega^0(M,\fg_P) \to \Gamma(\fN)$ and thus $(\fa^*)^{-1}$ also maps to $\Gamma(\fN)$.
Thus the obstruction reduces to
\begin{equation*}
  -\inner{\Phi_0}{\psi}_{L^2}.
\end{equation*}
By \eqref{Eq_L0Inverse}, the solution to $L_0(\phi,a,\xi) = (0,*\varpi F_{A_0}, 0)$ is
\begin{equation}
 \label{Eq_Obstruction1}
 \begin{split}
  \phi
  &=
    - \slD_\fH^{-1}\gamma\rII^*\chi
    - \chi,
  \qand \\ 
  (a,\xi)
  &=
    \fa^{-1}\slD_\fN\chi
    + \fa^{-1}\gamma\rII\slD_\fH^{-1}\gamma\rII^*\chi
  \end{split}
\end{equation}
with
\begin{equation}
  \label{Eq_Obstruction2}
  \chi \coloneq (\fa^*)^{-1}*\varpi F_{A_0}.
\end{equation}

Setting $\hat\fc_0 \coloneq \epsilon^2(\phi,a,\xi)$, we have
\begin{equation*}
  \epsilon^4\hat\fd_1
  \coloneq
    \bar L_\epsilon(0,\hat\fc_0) + Q_\epsilon(\hat\fc_0) + (0, 0, \epsilon^2 *\varpi F_{A_0}, 0)
  =
    O(\epsilon^4).
\end{equation*}
The component of $\hat\fd_1$ in $\Gamma(\fS)$ is
\begin{equation*}
  -\bar\gamma(a)\phi.
\end{equation*}
Using $\bar\gamma(a)\Phi_0 \in \Gamma(\fN)$ and $\rho(\fg_P)\Phi \perp \chi$,
we find that the obstruction to being able to solve $L_0(\phi_1,a_1,\xi_1) = \hat\fd_1$ is
\begin{equation*}
  \begin{split}
  \fo \coloneq \inner{\Phi_0}{\bar\gamma(a)\phi}_{L^2} 
  &=
    \inner{\bar\gamma(a)\Phi_0}{\phi}_{L^2} \\
  &=
    -\inner{\bar\gamma(a)\Phi_0}{\chi}_{L^2} \\
  &=
    -\inner{\fa(a,\xi)}{\chi}_{L^2} \\
  &=
    -\inner{\slD_\fN\chi
    + \gamma\rII\slD_\fH^{-1}\gamma\rII^*\chi}{\chi}_{L^2} \\
  &= 
    -\inner{\slD_\fN\chi}{\chi}_{L^2}
    +\inner{\slD_\fH^{-1}\gamma\rII^*\chi}{\gamma\rII^*\chi}_{L^2}.
   \end{split}
\end{equation*}
Comparing this with
\begin{align*}
  \inner{\slD_{A_0}\phi}{\phi}_{L^2}
  &= 
    \inner{\slD_{A_0}\slD_\fH^{-1}\gamma\rII^*\chi + \slD_{A_0}\chi}{\slD_\fH^{-1}\gamma\rII^*\chi+\chi}_{L^2} \\
  &= 
    \inner{(\slD_\fH+\gamma\rII)\slD_\fH^{-1}\gamma\rII^*\chi +(\slD_{\fN} - \gamma\rII^*)\chi}{\slD_\fH^{-1}\gamma\rII^*\chi+\chi}_{L^2} \\
  &= 
    \inner{\gamma\rII^*\chi}{\slD_\fH^{-1}\gamma\rII^*\chi}_{L^2} 
    +
    \inner{\gamma\rII\slD_\fH^{-1}\gamma\rII^*}{\chi}_{L^2} \\
  &\qquad
    +  
    \inner{\slD_\fN\chi}{\chi}_{L^2}
    -
    \inner{\gamma\rII^*\chi}{\slD_\fH^{-1}\gamma\rII^*\chi}_{L^2} \\
  &=   
    - \inner{\slD_\fH^{-1}\gamma\rII^*}{\gamma\rII^*\chi}_{L^2}
    +  
    \inner{\slD_\fN\chi}{\chi}_{L^2} \\
  &=
    -\fo
\end{align*}
completes the proof.
\qed


\appendix

\section{Examples of Seiberg--Witten equations}
\label{Sec_Examples}

\begin{example}
  \label{Ex_NonabelianMonopoles}
  Let $G = \U(n)$ and $S = \H \otimes_\C \C^n$, where the complex structure on $\H$ is given by right-multiplication by $i$.
  Let $\rho \co \U(n) \to \Sp(\H \otimes_\C \C^n)$ be induced from the standard representation of $\U(n)$. 
  The corresponding Seiberg--Witten equation is the \defined{$\U(n)$--monopole equation} in dimension three.
  The closely related $\PU(2)$--monopole equation on $4$--manifolds plays a crucial role in \citeauthor{Pidstrigach1995}'s approach to proving Witten's conjecture relating Donaldson and Seiberg--Witten invariants;
  see, e.g., \cite{Pidstrigach1995,Feehan1996,Teleman2000}.

  In this example as well as in \autoref{Ex_ClassicalSeibergWitten}, we have $\mu^{-1}(0) = \set{0}$.
\end{example}

\begin{example}
  \label{Ex_GCFlatness}
  Let $G$ be a compact Lie group, $\fg = \Lie(G)$, and fix an $\Ad$--invariant inner product on $\fg$.
  $S \coloneq \H\otimes_\R \fg$ is a quaternionic Hermitian vector space, and $\rho \co G \to \Sp(S)$ induced by the adjoint action is a quaternionic representation.
  The moment map $\mu\co \H\otimes_\R\fg \to (\Im\H\otimes\fg)^*$ is given by
  \begin{align*}
    \mu(\xi)
    &=
      \frac12[\xi,\xi] \\
    &=
      ([\xi_2,\xi_3]+[\xi_0,\xi_1])\otimes i
    + ([\xi_3,\xi_1]+[\xi_0,\xi_2])\otimes j
    + ([\xi_1,\xi_2]+[\xi_0,\xi_3])\otimes k
  \end{align*}
  for $\xi = \xi_0\otimes 1 + \xi_1\otimes i + \xi_2\otimes j + \xi_3\otimes k \in \H\otimes_\R\fg$.
  Set $H \coloneq \Sp(1) \times G$ and extend the above quaternionic representation of $G$ to $H$ by declaring that $q \in \Sp(1)$ acts by right-multiplication with $q^*$.

  Taking $Q$ to be the product of the chosen spin structure $\fs$ with a principal $G$--bundle, and choosing $B$ such that it induces the spin connection on $\fs$, \eqref{Eq_SeibergWitten} becomes
  \begin{align*}
    \rd_A^*a &= 0, \\
    *\rd_Aa + \rd_A\xi &= 0, \qand \\
    F_A &= \tfrac12[a\wedge a] + *[\xi,a].
  \end{align*}
  for $\xi \in \Gamma(\fg_P)$, $a \in \Omega^1(M,\fg_P)$ and $A \in \sA(P)$.
  If $M$ is closed, then integration by parts shows that every solution of this equation satisfies $\rd_A\xi=0$ and $[\xi,a]=0$; hence, $A + ia$ defines a \defined{flat $G^\C$--connection}.
  Here $G^\C$ denotes the complexification of $G$. 

  In the above situation, we have $\mu^{-1}(0)/G \iso (\H\otimes\ft)/W$ where $\ft$ is a the Lie algebra of a maximal torus $T \subset G$ and $W = N_G(T)/T$ is the Weyl group of $G$.
  However, since each $\xi \in \mu^{-1}(0)$ has stabilizer conjugate to $T$, we have $\mu^{-1}(0)\cap S^\reg = \emptyset$, and the hyperkähler quotient $S^\reg \hkred G$ is empty.  
\end{example}

\begin{example}
  \label{Ex_ADHMSeibergWitten}
  The motivating example for us is the \defined{$(r,k)$ ADHM Seiberg--Witten equation}, which we expect to play in important role in gauge theory on $\Gtwo$--manifolds,%
  \footnote{%
    More precisely, we expect solutions of the $(r,k)$ ADHM Seiberg--Witten equation to play a role in counter-acting the bubbling phenomenon along associative submanifolds discussed in \cite{Donaldson2009,Walpuski2013a};
    see also \cite{Haydys2017}.
  }
  and which arises from
  \begin{equation*}
    S = \Hom_\C(\C^r,\H\otimes_\C \C^k) \oplus \H^*\otimes_\R\fu(k)
  \end{equation*}
  with
  \begin{equation*}
    G = \U(k) \nsub H = \SU(r) \times \Sp(1) \times \U(k)
  \end{equation*}
  where $\SU(r)$ acts on $\C^r$ in the obvious way, $\U(k)$ acts on $\C^k$ in the obvious way and on $\fu(k)$ by the adjoint representation, and $\Sp(1)$ acts on the first copy of $\H$ trivially and on the second copy by right-multiplication with the conjugate.
  Accoding to \citet{Atiyah1978}, if $r \geq 2$, then $S^\reg \hkred G$ is the moduli space of framed $\SU(r)$ ASD instantons of charge $k$ on $\R^4$, and $\mu^{-1}(0)/G$ is its Uhlenbeck compactification, 
  If $r = 1$, then $\mu^{-1}(0)\cap S^\reg = \emptyset$, and $\mu^{-1}(0)/G = \Sym^k\H \coloneq \H^k/S_k$ by \citet[Example 3.14]{Nakajima1999}.
\end{example}


\section{Useful identities involving \texorpdfstring{$\mu$}{mu}}
\label{Sec_UsefulFormulae}

This appendix summarizes and proves a few useful identities regarding $\mu$, some of which are used in this article.

\begin{prop}
  \label{Prop_AMuXiMu}
  For $\xi \in \Omega^0(M,\fg_P)$, $a \in \Omega^1(M,\fg_P)$, and $\phi,\psi \in \Gamma(\fS)$, we have
  \begin{equation}
    \label{Eq_XiMu}
    [\xi,\mu(\phi,\psi)] = \mu(\phi,\rho(\xi)\psi) + \mu(\psi,\rho(\xi)\phi),
  \end{equation}
  and for $a \in \Omega^1(M,\fg_P)$ and $\phi,\psi \in \Gamma(\fS)$, we have
  \begin{equation}
    \label{Eq_AMu}
    2[a \wedge \mu(\phi,\psi)]
    =
      - * \rho^*\((\bar\gamma(a)\phi)\psi^*\)
      - * \rho^*\((\bar\gamma(a)\psi)\phi^*\).
  \end{equation}
\end{prop}

\begin{proof}
  For all $a \in \Omega^1(M,\fg_P)$, we have
  \begin{align*}
    2\inner{[\xi,\mu(\phi,\psi))]}{*a}
    &=
      \inner{\mu(\phi,\psi)}{-*[\xi,a]} \\
    &=
      \inner{\phi}{-\bar\gamma([\xi,a])\psi} \\
    &=
      -\inner{\phi}{\rho(\xi)\bar\gamma(a)\psi}
      +\inner{\phi}{\bar\gamma(a)\rho(\xi)\psi} \\
    &=
      \inner{\rho(\xi)\phi}{\bar\gamma(a)\psi}
      +\inner{\phi}{\bar\gamma(a)\rho(\xi)\psi} \\
    &=
      2\inner{\mu(\phi,\rho(\xi)\psi)}{*a}
      +2\inner{\mu(\psi,\rho(\xi)\phi)}{*a}.
  \end{align*}
  This proves the first identity.
  To prove the second identity, note that, for all $\eta \in \Omega^0(M,\fg_P)$, we have
  \begin{align*}
    2\inner{[a\wedge \mu(\phi,\psi)]}{*\eta}
    &=
      \inner{2\mu(\phi)}{*[\eta,a]} \\
    &=
      \inner{\phi}{\bar\gamma([\eta,a])\psi} \\
    &=
      \inner{\phi}{\rho(\xi)\bar\gamma(a)\psi}
      -\inner{\phi}{\bar\gamma(a)\rho(\xi)\psi} \\
    &=
      -\inner{\xi}{\rho^*\((\bar\gamma(a)\psi)\phi^*\)}
      -\inner{\xi}{\rho^*\((\bar\gamma(a)\phi)\psi^*\)}.
      \qedhere
  \end{align*}
\end{proof}

\begin{prop}
  \label{Prop_DMu_D*Mu}
  For all $A \in \sA(Q)$ and $\phi, \psi \in \Gamma(\fS)$ we have
  \begin{equation}
    \label{Eq_DMu}
    \rd_A\mu(\phi,\psi)
    = -*\frac12\rho^*\((\slD_A\phi)\psi^* + (\slD_A\psi)\phi^*\)
  \end{equation}
  and
  \begin{equation}
    \label{Eq_D*Mu}
    \begin{split}
      \rd_A^*\mu(\phi,\psi)
      &=
        *\mu(\slD_A\phi,\psi)
        + *\mu(\slD_A\psi,\phi) \\
      &\qquad
        - \frac12 \rho^*\((\nabla_A\phi)\psi^*\)
        - \frac12 \rho^*\((\nabla_A\psi)\phi^*\).
    \end{split}
  \end{equation}  
\end{prop}

\begin{proof}
  Fix a point $x \in M$, a positive local orthonormal frame $(e_i)$ around $x$ with $(\nabla e_i)(x) = 0$, and let $\xi$ be a local section of $\fg_P$ defined in a neighborhood of $x$ satisfying $(\nabla \xi)(x) = 0$.
  We set $\nabla^A_i \coloneq \nabla^A_{e_i}$.

  At the point $x \in M$, we compute with 
  \begin{align*}
     \inner{\rd_A\mu(\phi,\psi)}{*\xi}
    &=
      -\inner{\rd_A^**\mu(\phi,\psi))}{\xi} \\
    &=
      \frac12 \sum_{i=1}^3 
      \nabla^A_i \inner{\bar\gamma(\xi\otimes e^i) \phi}{\psi} \\
    &=
      \frac12 \(\inner{\rho(\xi)\slD_A \phi}{\psi} + \inner{\phi}{\rho(\xi)\slD_A\psi}\) \\
    &=
      - \frac12 \inner{\xi}{(\slD_A \phi)\psi^* + (\slD_A \psi)\phi^*}.
  \end{align*}
  This proves the first identity.
  To prove the second identity, we compute
  \begin{align*}
    \inner{\rd_A^*\mu(\phi,\psi)}{\xi}
    &=
      \inner{*~\rd_A* \mu(\phi,\psi)}{\xi} \\
    &=
      *\frac12 \sum_{i,j=1}^3 
      \nabla^A_i \inner{\bar\gamma(\xi\otimes e^j) \phi}{\psi} e^i\wedge e^j \\
    &=
      * \frac12 \sum_{i,j=1}^3 
      \(\inner{\bar\gamma(\xi\otimes e^j)\nabla^A_i \phi}{\psi}
      + \inner{\bar\gamma(\xi\otimes e^j)\nabla^A_i\psi}{\phi}\) e^i\wedge e^j \\
    &=
      \frac12 \sum_{i,j,k=1}^3 \epsilon_{ijk}^2 
      \Big(\inner{\rho(\xi)\gamma(e^k)\gamma(e^i)\nabla^A_i\phi}{\psi} \\
    &\qquad\qquad\qquad\qquad
      + \inner{\rho(\xi)\gamma(e^k)\gamma(e^i)\nabla^A_i\psi}{\phi}\Big) e^k \\
    &=
      \frac12 \sum_{k=1}^3
      \Big(\inner{\bar\gamma(\xi\otimes e^k) \slD_A \phi}{\psi}
      + \inner{\bar\gamma(\xi\otimes e^k) \slD_A \psi}{\phi} \\
    &\qquad\qquad\quad
      + \inner{\rho(\xi) \nabla^A_k \phi}{\psi}
      + \inner{\rho(\xi) \nabla^A_k \psi}{\phi}\Big) e^k \\
    &=
      \inner{\xi}{*\mu(\slD_A\phi,\psi)}
      + \inner{\xi}{*\mu(\slD_A\phi,\psi)} \\
    &\qquad
      + \frac12 \inner{\rho(\xi) \nabla_A\phi}{\psi}
      + \frac12 \inner{\rho(\xi) \nabla_A\psi}{\phi}.
      \qedhere
  \end{align*}
\end{proof}

\begin{prop}
  \label{Prop_Higgs}
  If $(\epsilon,\Phi,A) \in (0,\infty)\times\Gamma(\fS)\times \sA_B(Q)$ is a solution of \eqref{Eq_BlownUpSeibergWitten} and $R_\Phi(\xi) = \rho(\xi)\Phi$, then
  \begin{align*}
    (\rd_A^*\rd_A+\rd_A\rd_A^*+ \epsilon^{-2}R_\Phi^*R_\Phi)\mu(\Phi)
    &=
      \sum_{i,j=1}^3
        \frac12\rho^*\paren*{\paren[\big]{(F^B_{ij}+F^\fs_{ij}) \cdot \Phi}\Phi^*} e^{ij} \\
    &\qquad\qquad
      + \rho^*\paren*{(\nabla^A_j\Phi)(\nabla^A_i\Phi)^*} e^{ij}.
  \end{align*}
  Here $(e_1,e_2,e_3)$ is local orthonormal frame, $(e^1,e^2,e^3)$ is the dual coframe, $F^B_{ij} \coloneq F_B(e_i,e_j)$, $F^\fs_{ij} \coloneq F_\fs(e_i,e_j)$ with $F_\fs$ denoting the curvature of the spin connection on $\fs$, and $e^{ij} \coloneq e^i \wedge e^j$.
\end{prop}

\begin{proof}
  We compute
  \begin{align*}
    \rd_A \rho^*[(\nabla_A\Phi)\Phi^*]
    &=
      \sum_{i,j=1}^3 \rho^*[(\nabla^A_i\nabla^A_j\Phi)\Phi^*] e^{ij}
      + \rho^*[(\nabla^A_j\Phi)(\nabla^A_i\Phi)^*] e^{ij} \\
    &=
      \sum_{i,j=1}^3 \frac12\rho^*[(F^A_{ij} \cdot \Phi)\Phi^*] e^{ij}
      + \rho^*[(\nabla^A_j\Phi)(\nabla^A_i\Phi)^*] e^{ij}.
  \end{align*}
  Since
  \begin{equation*}
    \sum_{i,j=1}^3 \rho^*[\rho(\mu(\Phi)_{ij})\Phi)\Phi] e^{ij}
    = R_\Phi^*R_\Phi\mu(\Phi),
  \end{equation*}
  the result now follows from \autoref{Prop_DMu_D*Mu}.
\end{proof}


\section{Proof of \autoref{Prop_SWDGLA}}
\label{Sec_DGLA}

For the reader's convenience, we recall the definitions of the graded vector space $L^\bullet$,
  \begin{align*}
    L^0 &\coloneq \Omega^0(M,\fg_P), \\
    L^1 &\coloneq \Gamma(\fS)\oplus\Omega^1(M,\fg_P), \\
    L^2 &\coloneq \Gamma(\fS)\oplus\Omega^2(M,\fg_P), \qand \\
    L^3 &\coloneq \Omega^3(M,\fg_P),
  \end{align*}
  the graded Lie bracket $\LIE{\cdot}{\cdot}$,
  \begin{align*}
    \LIE{a}{b}
    &\coloneq
      \wlie{a}{b} && \text{for } a,b \in \Omega^\bullet(M,\fg_P), \\
    \LIE{\xi}{\phi}
    &\coloneq
      \rho(\xi)\phi &&\text{for } \xi \in \Omega^0(M,\fg_P) \text{ and } \phi \in \Gamma(\fS) \text{ in degree $1$ or $2$}, \\
    \LIE{a}{\phi}
    &\coloneq
      -\bar\gamma(a)\phi &&\text{for } a \in \Omega^1(M,\fg_P) \text{ and } \phi \in \Gamma(\fS) \text{ in degree $1$}, \\
    \LIE{\phi}{\psi}
    &\coloneq
      -2\mu(\phi,\psi) &&\text{for } \phi, \psi \in \Gamma(\fS) \text{ in degree $1$, and} \\
    \LIE{\phi}{\psi}
    &\coloneq
      -*\rho^*(\phi\psi^*) &&\text{for } \phi \in \Gamma(\fS) \text{ in degree $1$ and } \psi \in \Gamma(\fS) \text{ in degree 2},
  \end{align*}
  and the graded differential $\delta_{\fc}$,
  \begin{align*}
    \delta_\fc^0(\xi)
    &\coloneq
      \begin{pmatrix}
        -\rho(\xi)\Phi \\
        \rd_A\xi
      \end{pmatrix}, \\
    \delta_\fc^1(\phi,a)
    &\coloneq
      \begin{pmatrix}
        -\slD_A\phi - \bar\gamma(a)\Phi \\
        -2\mu(\Phi,\phi) + \rd_Aa
      \end{pmatrix}, \qand \\
    \delta_\fc^2(\psi,b)
    &\coloneq
      *\rho^*(\psi\Phi^* ) + \rd_Ab.
  \end{align*}

We proceed in four steps.

\setcounter{step}{0}
\begin{step}
  $(L^\bullet,\LIE{\cdot}{\cdot})$ is a graded Lie algebra.    
\end{step}

We need to verify the graded Jacobi identity, that is, for every three homogeneous elements $x,y,z \in L^\bullet$ we need to show that
\begin{equation*}
  J(x,y,z)
  \coloneq
  (-1)^{\deg{x}\cdot\deg{z}}\LIE{x}{\LIE{y}{z}}
  + (-1)^{\deg{y}\cdot\deg{x}}\LIE{y}{\LIE{z}{x}}
  + (-1)^{\deg{z}\cdot\deg{y}}\LIE{z}{\LIE{x}{y}}
\end{equation*}
vanishes.
Here $\deg{x}$ denotes the degree of $x$.

For degree reasons $J(x,y,z) = 0$, unless $\deg x + \deg y + \deg z \leq 3$.
$(\Omega^\bullet(M,\fg_P),{[\cdot\wedge\cdot]})$ is a graded Lie algebra.
Since $J(x,y,z)$ is invariant under permutations of $x$, $y$, and $z$, we can assume that $z \in \Gamma(\fS)$ in degree $1$ or $2$.
Hence, only the following five cases remain:
\begin{itemize}
\item 
  For $\xi,\eta \in \Omega^0(M,\fg_P)$, and $\phi \in \Gamma(\fS)$ in degree $1$ or $2$, we have
  \begin{align*}
    J(\xi,\eta,\phi)
    &=
      \LIE{\xi}{\LIE{\eta}{\phi}} + \LIE{\eta}{\LIE{\phi}{\xi}} + \LIE{\phi}{\LIE{\xi}{\eta}} \\
    &=
      \rho(\xi)\rho(\eta)\phi - \rho(\eta)\rho(\xi)\phi - \rho([\xi,\eta])\phi
      =
      0.
  \end{align*}
\item
  For $\xi \in \Omega^0(M,\fg_P)$, and $\phi,\psi \in \Gamma(\fS)$ in degree $1$, we have
  \begin{align*}
    J(\xi,\phi,\psi)
    &=
      \LIE{\xi}{\LIE{\phi}{\psi}} + \LIE{\phi}{\LIE{\psi}{\xi}} - \LIE{\psi}{\LIE{\xi}{\phi}} \\
    &=
      -2[\xi,\mu(\phi,\psi)] + 2\mu(\phi,\rho(\xi)\psi) + 2\mu(\psi,\rho(\xi)\phi)
      =
      0
  \end{align*}
  by \autoref{Prop_AMuXiMu}.
\item
  For $\xi \in \Omega^0(M,\fg_P)$, $\phi \in \Gamma(\fS)$ in degree $1$ and $\psi \in \Gamma(\fS)$ in degree $2$, we have
  \begin{align*}
    J(\xi,\phi,\psi)
    &=
      \LIE{\xi}{\LIE{\phi}{\psi}} + \LIE{\phi}{\LIE{\psi}{\xi}} + \LIE{\psi}{\LIE{\xi}{\phi}} \\
    &=
      -([\xi,*\rho^*\(\phi\psi^*\)]
      - *\rho^*\(\phi(\rho(\xi)\psi)^*\)
      + *\rho^*\(\psi(\rho(\xi)\phi)^*\)) \\
    &=
      -*\rho^*\([\rho(\xi),\phi\psi^*] + \phi\psi^*\rho(\xi) - \rho(\xi)\phi \psi^*\)
      =
      0.
  \end{align*}
\item
  For $\xi \in \Omega^0(M,\fg_P)$, $a \in \Omega^1(M,\fg_P)$, and $\phi \in \Gamma(\fS)$ in degree $1$, we have
  \begin{align*}
    J(\xi,a,\phi)
    &=
      \LIE{\xi}{\LIE{a}{\phi}} + \LIE{a}{\LIE{\phi}{\xi}} - \LIE{\phi}{\LIE{\xi}{a}}] \\
    &=
      -\rho(\xi)\bar\gamma(a)\phi + \bar\gamma(a)\rho(\xi)\phi + \bar\gamma(\lie{\xi}{a})\phi
      = 
      0.
  \end{align*}
\item
  For $a \in \Omega^1(M,\fg_P)$ and $\phi,\psi \in \Gamma(\fS)$ in degree $1$, we have
  \begin{align*}
    J(a,\phi,\psi)
    &=
      - \LIE{a}{\LIE{\phi}{\psi}} - \LIE{\phi}{\LIE{\psi}{a}} - \LIE{\psi}{\LIE{a}{\phi}} \\
    &=
      2\wlie{a}{\mu(\phi,\psi)}
      + *\rho^*\((\bar\gamma(a)\psi)\phi^*\)
      + *\rho^*\((\bar\gamma(a)\phi)\psi^*\)
      =
      0
  \end{align*}
  by \autoref{Prop_AMuXiMu}.

\end{itemize}

\begin{step}
  $(L^\bullet,\delta_\fc^\bullet)$ is a DGA.
\end{step}

We need to show that $\delta_\fc\circ \delta_\fc = 0$.
Using \autoref{Prop_AMuXiMu}, we compute that
\begin{align*}
  \delta_\fc^1\circ\delta_\fc^0(\xi)
  &=
    \begin{pmatrix}
      \slD_A \rho(\xi)\Phi - \bar\gamma(\rd_A\xi)\Phi \\
      2\mu(\Phi,\rho(\xi)\Phi) + \rd_A\rd_A\xi
    \end{pmatrix} \\
  &=
    \begin{pmatrix}
      \rho(\xi)\slD_A \Phi \\
      \lie{F_A-\mu(\Phi)}{\xi}
    \end{pmatrix} = 0,
\end{align*}
and, using \autoref{Prop_DMu_D*Mu} and \autoref{Prop_AMuXiMu}, we compute that
\begin{align*}
  \delta_\fc^2\circ\delta_\fc^1(\phi,a)
  &=
    -*\rho^*\((\slD_A\phi)\Phi^*\) - *\rho^*\((\bar\gamma(a)\Phi)\Phi^*\)
    - 2\rd_A\mu(\Phi,\phi) + \rd_A\rd_A a \\
  &=
    \rho^*\((\slD_A\Phi)\phi\)
    + \wlie{(F_A-\mu(\Phi))}{a} = 0.
\end{align*}

\begin{step}
  $(L^\bullet,\LIE{\cdot}{\cdot},\delta_\fc^\bullet)$ is a DGLA.
\end{step}

We need to verify that $\delta_\fc^\bullet$ satisfies the graded Leibniz rule with respect to $\LIE{\cdot}{\cdot}$, that is for every two homogeneous elements $x,y \in L^\bullet$ we need to show that
\begin{equation*}
  D(x,y) 
  =
  \delta \LIE{x}{y}
  - \LIE{\delta x}{y}
  - (-1)^{\deg x}\LIE{x}{\delta y}
\end{equation*}
vanishes.

For degree reasons, $D(x,y) = 0$ unless $\deg x + \deg y \leq 2$;
hence, only the following eight cases remain:
\begin{itemize}
\item
  For $\xi,\eta \in \Omega^0(M,\fg_P)$, we have
  \begin{equation*}
    D(\xi,\eta)
    =
    \begin{pmatrix}
      -\rho([\xi,\eta])\Phi \\
      \rd_A[\xi,\eta]
    \end{pmatrix}
    -
    \LIE{
      \begin{pmatrix}
        -\rho(\xi)\Phi \\
        \rd_A\xi
      \end{pmatrix}
    }{
      \eta
    }
    -
    \LIE{
      \xi
    }{
      \begin{pmatrix}
        -\rho(\eta)\Phi \\
        \rd_A\eta
      \end{pmatrix}
    } = 0.
  \end{equation*}
\item
  For $\xi \in \Omega^0(M,\fg_P)$ and $\phi \in \Gamma(\fS)$ in degree $1$, we have
  \begin{align*}
    D(\xi,\phi)
    &=
      \begin{pmatrix}
        -\slD_A\rho(\xi)\phi \\
        -2\mu(\Phi,\rho(\xi)\phi)
      \end{pmatrix}
    -
    \LIE{
    \begin{pmatrix}
      -\rho(\xi)\Phi \\
      \rd_A\xi
    \end{pmatrix}
    }{
    \phi
    }
    -
    \LIE{
    \xi
    }{
    \begin{pmatrix}
      -\slD_A\phi \\
      -2\mu(\Phi,\phi)
    \end{pmatrix}
    } \\
    &= 
      \begin{pmatrix}
        -\slD_A\rho(\xi)\phi + \bar\gamma(\rd_A\xi)\phi + \rho(\xi)\slD_A\phi \\
        -2\mu(\Phi,\rho(\xi)\phi) - 2\mu(\rho(\xi)\Phi,\phi) + 2[\xi,\mu(\Phi,\phi)]
      \end{pmatrix}
    =
    0
  \end{align*}
  by \autoref{Prop_AMuXiMu}.
\item
  For $\xi \in \Omega^0(M,\fg_P)$ and $a \in \Omega^1(M,\fg_P)$, we have
  \begin{align*}
    D(\xi,a)
    &=
      \begin{pmatrix}
        -\bar\gamma([\xi,a])\Phi \\
        \rd_A [\xi,a]
      \end{pmatrix}
    -
    \LIE{
    \begin{pmatrix}
      -\rho(\xi)\Phi \\
      \rd_A\xi
    \end{pmatrix}
    }{
    a
    }
    -
    \LIE{
    \xi
    }{
    \begin{pmatrix}
      -\bar\gamma(a)\Phi \\
      \rd_A a
    \end{pmatrix}
    } \\
    &=
      \begin{pmatrix}
        -\bar\gamma([\xi,a])\Phi - \bar\gamma(a)\rho(\xi)\Phi + \rho(\xi)\bar\gamma(a)\Phi \\
        \rd_A [\xi,a] - [\rd_A\xi \wedge a] - [\xi,\rd_A]
      \end{pmatrix}
    =
    0.
  \end{align*}
\item
  For $\xi \in \Omega^0(M,\fg_P)$ and $\phi \in \Gamma(\fS)$ in degree $2$, we have
  \begin{align*}
    D(\xi,\phi)
    &=
      *\rho^*(\rho(\xi)\phi\Phi^*)
      -
     \LIE{\rho(\xi)\Phi}{\phi}
      -
      \LIE{\rd_A\xi}{\phi}
      -
      \LIE{
      \xi
      }{
      *\rho^*(\phi\Phi^*)
      } \\
    &=
      *\rho^*(\rho(\xi)\phi\Phi^*)
      - * \rho^*( \phi \Phi^* \rho(\xi))
      - [\xi,*\rho^*(\phi\Phi^*)] = 0.
  \end{align*}
\item
  For $\xi \in \Omega^0(M,\fg_P)$ and $b \in \Omega^2(M,\fg_P)$, we have
  \begin{equation*}
    D(\xi,b)
    = \rd_A [\xi,b] - [\rd_A\xi,b] - [\xi,\rd_Ab] = 0.
  \end{equation*}
\item
  For $\phi,\psi \in \Gamma(\fS)$ in degree $1$, we have
  \begin{align*}
    D(\phi,\psi)
    &=
      -2\rd_A\mu(\phi,\psi)
      -
      \LIE{
      \begin{pmatrix}
        -\slD_A\phi \\
        -2\mu(\Phi,\phi)
      \end{pmatrix}
    }{
    \psi
    }
    +
    \LIE{
    \phi
    }{
    \begin{pmatrix}
      -\slD_A\psi \\
      -2\mu(\Phi,\psi)
    \end{pmatrix}
    } \\
    &=
      -2\rd_A\mu(\phi,\psi) - *\rho^*\((\slD_A\phi)\psi^*\) - *\rho^*\((\slD_A\psi)\phi^*\)
      =
      0
  \end{align*}
  by \autoref{Prop_DMu_D*Mu}.
\item
  For $a \in \Omega^1(M,\fg_P)$ and $\phi \in \Gamma(\fS)$ in degree $1$, we have
  \begin{align*}
    D(a,\phi)
    &=
      *\rho^*\((\bar\gamma(a)\phi)\Phi^*\)
      -
      \LIE{
      \begin{pmatrix}
        -\bar\gamma(a)\Phi \\
        \rd_A a
      \end{pmatrix}
    }{
    \phi 
    }
    +
    \LIE{
    a
    }{
    \begin{pmatrix}
      -\slD_A\phi \\
      -2\mu(\Phi,\phi)
    \end{pmatrix}
    } \\
    &=
      -*\rho\((\bar\gamma(a)\phi)\Phi^*\)
      -*\rho^*\(\bar\gamma(a)\Phi)\phi^*\)
      -2\wlie{a}{\mu(\Phi,\phi)}
      =
      0
  \end{align*} 
  by \autoref{Prop_AMuXiMu}.
\item
  For $a,b \in \Omega^1(M,\fg_P)$, we have
  \begin{equation*}
    D(a,b)
    =
    \begin{pmatrix}
    - \bar\gamma{ \wlie{a}{b}} \Phi \\
    \rd_A \wlie{a}{b}
    \end{pmatrix}
    -
    \LIE{
      \begin{pmatrix}
        \bar\gamma(a)\Phi \\
        \rd_A a
      \end{pmatrix}
    }{
      b
    }
    +
    \LIE{
      a
    }{
      \begin{pmatrix}
        \bar\gamma(b)\Phi \\
        \rd_A b
      \end{pmatrix}
    }
    =
    0.
  \end{equation*}
\end{itemize}

\begin{step}
  For every $\hat\fc \coloneq (a,\phi) \in L^1$, $(A+a,\Phi+\phi)$ solves \eqref{Eq_SeibergWitten} if and only if $\delta_\fc \hat\fc + \frac12\LIE{\hat\fc}{\hat\fc} = 0$.
\end{step}

For $\hat\fc \coloneq (a,\phi) \in L^1$, we have
\begin{equation*}
  \delta\fc + \frac12\LIE{\fc}{\fc}
  =
  \begin{pmatrix}
    -\slD_A\phi - \bar\gamma(a)\Phi - \bar\gamma(a)\phi \\
    -2\mu(\Phi,\phi) - \mu(\phi,\phi) + \rd_A a + \frac12[a\wedge a]
  \end{pmatrix},
\end{equation*}
which vanishes if and only if $(A+a,\Phi+a)$ solves \eqref{Eq_SeibergWitten}.
\qed


\printreferences

\end{document}
